\newcommand\F{\mbox{I\kern-2pt F}}
\newcommand\cE{{\mathcal E}}
\newcommand\cF{{\mathcal F}}
\newcommand\cL{{\mathcal L}}
\newcommand\cB{{\mathcal B}}
\newcommand\cM{{\mathcal M}}
\newcommand\cX{{\mathcal X}}
\newcommand\cR{{\mathcal R}}
\newcommand\cP{{\mathcal P}}
\newcommand\cS{{\mathcal S}}
\newcommand\cZ{{\mathcal Z}}
\newcommand\cW{{\mathcal W}}
\newcommand\R{{\mathbb R}}
\newcommand\e{{\varepsilon}}
\newtheorem{theo}{Theorem}[section]
\newtheorem{prop}[theo]{Proposition}
\newtheorem{lemm}[theo]{Lemma}
\theoremstyle{remark}
\newtheorem{rem}[theo]{Remark}
\newcommand\beq{\begin{equation}}
\newcommand\eeq{\end{equation}}
\newcommand\bea{\begin{eqnarray}}
\newcommand\eea{\end{eqnarray}}
\newcommand\bean{\begin{eqnarray*}}
\newcommand\eean{\end{eqnarray*}}
\begin{document}

\title[NA$_1$ and local martingale deflators]{No arbitrage and local martingale deflators}

\thanks{The research of Yuri Kabanov  is funded by the grant of the Government of Russian Federation  $n^\circ$ 14.A12.31.0007}

\keywords{Fundamental Theorem of Asset Pricing; arbitrage; viability; num\'eraire; local martingale deflator; $\sigma$-martingale}

\subjclass{91B70; 60G44}

\date{\today}

\author{Yuri Kabanov}%
\address{Yuri Kabanov: Laboratoire de Math\'ematiques, Universit\'e de
Franche-Comt\'e, 16 Route de Gray,\\
 25030 Besan\c{c}on, cedex,
France,  and 
International Laboratory of Quantitative Finance,
Higher  School of Economics, Moscow,
Russia}%
\email{youri.Kabanov@univ-fcomte.fr}%

\author{Constantinos Kardaras}%
\address{Constantinos Kardaras: Department of Statistics, London School of Economics,   10 Houghton Street, London, WC2A 2AE, England}%
\email{k.kardaras@lse.ac.uk}%

\author{Shiqi Song}%
\address{Shiqi Song: Universit\'e  d'Evry Val d'Essonne, Boulevard de France, 91037 Evry, cedex, France}%
\email{bachsuitepremier@gmail.com}%

\begin{abstract}
A supermartingale deflator (resp., local martingale deflator) multiplicatively transforms nonnegative wealth processes into supermartingales (resp., local martingales). The supermartingale num\'eraire (resp., local martingale num\'eraire) is the wealth processes whose reciprocal is a supermartingale deflator (resp., local martingale deflator). It has been established in previous literature that absence of arbitrage of the first kind (NA$_1$)  is equivalent to existence of the supermartingale num\'eraire, and further equivalent to existence of a strictly positive local martingale deflator; however, under NA$_1$, the local martingale num\'eraire may fail to exist. In this work, we establish that, under NA$_1$, any total-variation neighbourhood of the original probability has an equivalent probability under which the local martingale num\'eraire exists. This result, available previously only for single risky-asset models, is in striking resemblance with the fact that any total-variation neighbourhood of a separating measure contains an equivalent $\sigma$-martingale measure. The presentation of our main result is relatively self-contained, including a proof of existence of the supermartingale num\'eraire under NA$_1$. We further show that, if the L\'evy measures of the asset-price process have finite support, NA$_1$ is equivalent to existence of the local martingale num\'eraire with respect to the original probability.
\end{abstract}

\maketitle

\section*{Introduction} 

A central structural assumption in the mathematical theory of financial markets  is the existence of so-called \emph{local martingale deflators}, i.e., processes that act multiplicatively and transform nonnegative wealth processes into local martingales. Under the \emph{No Free Lunch with Vanishing Risk} (NFLVR) condition of \cite{DS, DS:98}, the density process of a local martingale (or, more generally, a $\sigma$-martingale) measure is a strictly positive local martingale deflator. However, strictly positive local martingale deflators may exist even if the market allows for free lunch with vanishing risk. One may expect that strictly positive supermartingale deflators, multiplicatively transforming nonnegative wealth processes into supermartingales, may exist even in a wider class of models. 

Of special interest is the case where there exists a (necessarily unique) strictly positive supermartingale deflator which is the reciprocal of a strictly positive wealth process. If the latter wealth process is taken as the num\'eraire, the prices of all assets expressed in its units become supermartingales. In this case, we shall say that the model admits the \emph{supermartingale num\'eraire}.    
In a completely analogous way, we define the stronger notion of a \emph{local martingale num\'eraire}; in view of Fatou's lemma, local martingale num\'eraires are, in particular, supermartingale num\'eraires.

Since, loosely speaking, a supermartingale is a process ``decreasing in the mean,'' wealth processes that are supermartingale num\'eraires are expected to have certain optimality properties. Indeed, there is an extensive body of literature studying, under various levels of generality, supermartingale (or local martingale) num\'eraires in conjunction with  closely related concepts, such as log-optimal portfolios, optimal growth portfolios, etc.; see \cite{Algoet-Cover}, \cite{LW}, \cite{Becherer}, \cite{GK}, \cite{Rokhlin}, \cite{Hulley-Sch}, and references therein.

The relevant, weaker than NFLVR, no-arbitrage property connected to existence of supermartingale (or local martingale) num\'eraires was isolated by various authors under different names:  \emph{No Asymptotic Arbitrage of the 1st Kind} (NAA$_1$), \emph{No  Arbitrage of the 1st Kind} (NA$_1$), \emph{No Unbounded Profit with Bounded Risk} (NUPBR),  etc. It is not difficult to show that all these properties are in fact equivalent, even in a wider framework  than that of the standard semimartingale setting---for more information, see Appendix \ref{sec:arbitrage}. In the present paper we opt to utilise the economically meaningful formulation NA$_1$, defined as the property of the market to assign a strictly positive superreplication price to any non-trivial positive contingent claim.  

In the standard model studied here, the market is described by a $d$-dimensional semimartingale process $S$ giving the discounted prices of basic securities. In \cite{Kara-Kard} it was shown (in fact, under the potential presence of convex portfolio constraints) that the following statements are equivalent:\begin{enumerate}
	\item[(i)] Condition NA$_1$ holds.
	\item[(ii)] There exists a strictly positive supermartingale deflator.
	\item[(ii$'$)] The supermartingale num\'eraire exists.
\end{enumerate}
In \cite{Takaoka}, the previous this list of equivalent 
properties is complemented by
\begin{enumerate}
	\item[(iii)] There exists a strictly positive local martingale deflator. 
\end{enumerate}
There are counterexamples (see, for example, \cite{Takaoka}) showing that local martingale num\'eraire may fail to exist even when there is an equivalent martingale measure (and, in particular, condition NA$_1$ holds) in the market. Of course, such an example is possible only in the case of discontinuous asset-price process: it was shown in \cite{Ch-Str} that, for continuous semimartingales, amongst strictly positive local martingale deflators there exists one whose reciprocal is a  value process of some portfolio.

In the present paper, we add to the above list of equivalences a further property: 

\begin{enumerate}
 \item[(iv)] In any neighbourhood (in total variation distance) of the original probability, there exists an equivalent probability under which the strictly positive local martingale deflator exists.
\end{enumerate}
Moreover, we prove that if the L\'evy measures of the asset-price process have finite support, condition NA$_1$ is equivalent to existence of the strictly positive local martingale deflator under the original probability. 

Establishing the chain  $(iv)\Rightarrow (iii)\Rightarrow (ii)\Rightarrow (i)$ is rather straightforward, and well-known. Our main contribution, which is the proof of implication $(i)\Rightarrow (iv)$, implies the result shown in the aforementioned papers \cite{Kara-Kard} and \cite{Takaoka}. We note that implication   $(i)\Rightarrow (iv)$ was established in  \cite{Kard} in the  one-asset semimartingale  model, by a method not allowing a straightforward generalisation to the multi-dimensional case. 

The arguments of \cite{Takaoka} establishing implication $(i)\Rightarrow (iii)$ combine a change-of-num\'eraire techniques and a clever reduction to the Delbaen--Schachermayer Fundamental Theorem of Asset Pricing (FTAP); in effect, they are based on a result considered as one of the most difficult in the field of Mathematical Finance.  On the other hand, it was shown in \cite{Kard10} that one may actually obtain the FTAP in a somewhat simpler way once the existence of a strictly positive local martingale deflator is guaranteed. For this reason, it is quite desirable to find an alternative proof of existence of a strictly positive local martingale deflator (under NA$_1$ and, \emph{a fortiori}, under NFLVR). In the present paper, we succeed in doing so by using techniques related to semimartingale predictable characteristics, which has proved efficient in a number of related problems, see, e.g., \cite{K97}, \cite{KStra:00}, \cite{Kara-Kard}.   

In order to be as self-contained as possible, on the way to establishing implication $(i)\Rightarrow (iv)$ we obtain as an intermediate step a version of the implication $(i) \Rightarrow (ii')$, under the additional assumption that the function $x\mapsto |x|\wedge |x|^2$ is integrable with respect to the L\'evy measures of the process $S$. This assumption allows us to both simplify the reasoning, as well as a obtain a ``semi-explicit'' form of the supermartingale num\'eraire.   

The concluding step in the proof of implication $(i)\Rightarrow (iv)$ is based on  
the observation that the ratio of positive wealth processes can be represented as a wealth process in an auxiliary model with another semimartingale price process $\bar S$, i.e., with different dynamics of the basic securities. When the denominator of the ratio is the supermartingale num\'eraire, the original probability is a separating measure for $\bar S$ in the terminology of \cite{K97}. In this case, it was shown in \cite{DS:98} that there exists an equivalent measure under which $\bar S$ becomes a $\sigma$-martingale, implying that the previous relative wealth processes become local martingales, which is exactly the required property  $(iv)$.  

\subsection*{Structure of the paper}

In Section \ref{setting} we present our set-up, as well as formulate and discuss our main result. Section \ref{roadmap} contains preliminaries on semimartingales, and (in)equality conditions in terms of their local characteristics under which the ratio of two stochastic exponentials  is a local martingale, or simply a supermartingale. In Section \ref{sdeflator} we prove that the supermartingale num\'eraire exists under condition NA$_1$, coupled with a simplifying integrability assumption which makes the arguments more transparent. In contrast to \cite{Kara-Kard} we use a new method for  verification of integrability of a ``candidate'' portfolio strategy required to build the supermartingale num\'eraire.
In Section \ref{num_change} we show that a change of num\'eraire leads to a new model with a different price process  $\bar S$ for the basic securities.  When the supermartingale num\'eraire is used, the original probability $P$ is a separating measure for $\bar S$. By an important result in \cite{DS:98}, we infer that arbitrarily close  to $P$ there exist probability measures under which $\bar S$ is a $\sigma$-martingale, implying that the considered  supermartingale num\'eraire is a local martingale num\'eraire with respect to this new probability. 

We complement our presentation with a long Appendix. Although the paper is self-contained, knowledge of recent developments in arbitrage theory would certainly be beneficial; we hope that the interested reader, carrying knowledge from several sources with different levels of generality and language used, will appreciate the unified discussion of the main concepts and results collected in Appendix \ref{sec:arbitrage}. A short proof of the fact that in any total-variation neighbourhood of a separating measure there exists a $\sigma$-martingale measure, originally appearing in \cite{DS:98}, is given in Appendix \ref{existsigma}. Appendix \ref{boundSE} contains a structured presentation of certain results from \cite{Kara-Kard} relating boundedness  in probability of sets of stochastic exponentials with boundedness of their arguments. Finally, and in order to avoid disruption of the flow of the main line of our proof, we defer to  Appendix \ref{LLN} seemingly new  ``laws of large numbers'' for sequences of stochastic integrals with truncated integrands.

\section{Framework and Main Result}
\label{setting}

\subsection{The set-up}
In all that follows, we fix $T \in (0, \infty)$ and work on a filtered probability space $(\Omega,\cF,{\bf F}=(\cF_t)_{t \in [0, T]},P)$ satisfying the usual conditions. Unless otherwise explicitly specified, all relationships between random variables are understood in the $P$-a.s. sense, and all relationships between stochastic processes are understood modulo $P$-evanescence. (There will be other notions of equality between processes that will be used.)

Let $S=(S_t)_{t \in [0, T]}$ be a $d$-dimensional semimartingale. We denote by $L(S)$ the set of $S$-integrable  processes, i.e., the set of all $d$-dimensional predictable processes for which the stochastic integral $H\cdot S$ is defined.  We stress that we consider general vector stochastic integration.

An integrand $H\in L(S)$ such that $x + H\cdot S \ge 0$ holds for some $x\in \R_+$ will be called \emph{$x$-admissible}. We introduce the set of semimartingales  
$$
\cX^x := \{H\cdot S : \  H\ \hbox{is $x$-admissible integrand}\}
$$ 
and denote  $\cX^x_>$ its subset formed by processes $X$ such that $x + X > 0$ and $x + X_- > 0$.  These sets are invariant under equivalent changes of the underlying probability measure. Define also the sets of random variables $\cX^x_T:=\{X_T:\ X\in \cX^x\}$.

For $\xi \in L^0_+$, we define 
$$
\bar x(\xi):=\inf \{x\in \R_+: \  \hbox{there exists $X\in \cX^x$ with}\ x +X_T
\ge \xi\},
$$  
with the standard convention  $\inf \emptyset =\infty$.

The previous definitions are abstract. In the special context of financial modelling: 
\begin{itemize}
	\item $S$ represents the price process of $d$ liquid assets, discounted by a certain baseline security.
	\item With $H$ being $x$-admissible integrand, $x + H\cdot S$ is  the value  process of a self-financing  portfolio with the initial capital $x \geq 0$, constrained to stay nonnegative at all times. 
	\item $\xi \in L^0_+$ represents a contingent claim, and $\bar x(\xi)$ is its \emph{superreplication price} in the class of nonnegative wealth processes. 
\end{itemize}

\subsection{Main result}

Recall that $|P - \tilde {P}|_{TV} = \sup_{A \in \cF} |P(A) - \tilde{P} (A)|$ is the total variation distance between the probabilities $P$ and $\tilde {P}$ on $(\Omega, \cF)$. The following is the main result of the paper.

\begin{theo}
\label{main-I}

The following conditions are equivalent: 

\begin{enumerate}
	\item[(i)] $\bar x(\xi)>0$ for every $\xi\in L^0_+\setminus\{0\}$.

	\item[(ii)] There exists a strictly positive process $Y$ such that 
 the process $Y(1+X)$ is a supermartingale for every $X\in \cX^1$.

	\item[(iii)] There exists a strictly positive process $Y$ such that  
 the process $Y(1+X)$ is a local martingale for every $X\in \cX^1$.
  
 \item[(iv)] For any $\e>0$ there exist a probability measure $\tilde P\sim P$ with $| \tilde P-P |_{TV} <\e$
and $\tilde X = \tilde H \cdot S\in \cX^1_>$ such that $(1+X)/(1+ \tilde X)$ is a local $\tilde P$-martingale for every $X\in \cX^1$. 
\end{enumerate}
Moreover, if the L\'evy measures of $S$ are concentrated on a finite number of points, one can take $\tilde P=P$ in (iv) above. 
\end{theo}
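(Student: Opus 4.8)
The plan is to establish the four implications $(iv)\Rightarrow(iii)\Rightarrow(ii)\Rightarrow(i)$ and then close the cycle with the crucial $(i)\Rightarrow(iv)$, and finally to treat the special case of finitely supported L\'evy measures. The three ``easy'' implications are essentially bookkeeping: for $(iv)\Rightarrow(iii)$, note that $\tilde X\in\cX^1_>$ means $Y:=1/(1+\tilde X)$ is a strictly positive process, and the hypothesis says $Y(1+X)$ is a local $\tilde P$-martingale for every $X\in\cX^1$; since $\tilde P\sim P$, local martingales are preserved under the null-set-preserving change of measure (after absorbing the density process, which is itself handled by the fact that $Y$ can be redefined to include it), so $(iii)$ holds. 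For $(iii)\Rightarrow(ii)$, a nonnegative local martingale is a supermartingale by Fatou's lemma, and $Y(1+X)\ge 0$ for $X\in\cX^1$. For $(ii)\Rightarrow(i)$: if $\xi\in L^0_+\setminus\{0\}$ had superreplication price $0$, there would be a sequence $X^n\in\cX^1$ (rescaled) with arbitrarily small initial capital $x_n\downarrow 0$ and $x_n+X^n_T\ge \xi$; testing the supermartingale property of $Y(1+X^n/x_n)$ at time $T$ and using $Y_0=1$ (WLOG) gives $E[Y_T\xi]\le E[Y_T(x_n+X^n_T)]\le x_n E[Y_T(1+X^n_T/x_n)]\le x_n Y_0\to 0$, forcing $Y_T\xi=0$, hence $\xi=0$ since $Y_T>0$ --- a contradiction.

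The heart of the matter is $(i)\Rightarrow(iv)$. Following the roadmap announced in the introduction, I would proceed in three stages. \emph{Stage one: reduction to bounded jumps / integrability.} Under $(i)$, first pass to an equivalent probability (arbitrarily close in total variation) under which the integrability assumption that $x\mapsto|x|\wedge|x|^2$ is integrable against the L\'evy measures of $S$ holds; this is a standard Girsanov-type truncation of large jumps and costs only an $\e/2$ in total variation. \emph{Stage two: construction of the supermartingale num\'eraire.} Under this integrability assumption and $(i)$, invoke (the self-contained version in Section \ref{sdeflator} of) the equivalence $(i)\Rightarrow(ii')$: the supermartingale num\'eraire $\widehat X = \widehat H\cdot S\in\cX^1_>$ exists, i.e., $1/(1+\widehat X)$ is a supermartingale deflator. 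This is where the ``candidate strategy'' $\widehat H$ is produced from the semimartingale characteristics of $S$ and shown to be $S$-integrable via the new integrability-verification method alluded to in the introduction; I would lean on the boundedness-in-probability criteria of Appendix \ref{boundSE} and the laws of large numbers of Appendix \ref{LLN} to control the truncated stochastic integrals. \emph{Stage three: change of num\'eraire plus the Delbaen--Schachermayer separating-measure theorem.} Use $\widehat X$ as num\'eraire: by Section \ref{num_change}, the processes $(1+X)/(1+\widehat X)$, $X\in\cX^1$, are precisely the nonnegative wealth processes $\bar{\cX}^1$ in an auxiliary model driven by a semimartingale $\bar S$ (the $\widehat X$-discounted version of $S$ together with $1/(1+\widehat X)$ as an extra asset). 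The supermartingale property of $1/(1+\widehat X)$ and of its products with the original wealth processes translates exactly into: $P$ is a \emph{separating measure} for $\bar S$, i.e., $E[\bar Y_T]\le \bar Y_0$ for every $\bar Y\in\bar{\cX}^1$. By the theorem recalled in Appendix \ref{existsigma} (Delbaen--Schachermayer), every total-variation neighbourhood of a separating measure contains an equivalent $\sigma$-martingale measure $\tilde P$ for $\bar S$; choosing it within distance $\e/2$ of $P$ and combining the two $\e/2$ errors yields $\tilde P\sim P$ with $|\tilde P - P|_{TV}<\e$. Under $\tilde P$, the bounded-from-below $\sigma$-martingales $(1+X)/(1+\widehat X)$ are local $\tilde P$-martingales (by Ansel--Stricker / Émery), which is $(iv)$ with $\tilde X := \widehat X$.

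For the final assertion, suppose the L\'evy measures of $S$ are concentrated on a fixed finite set of points (as functions of $\omega,t$, supported on a deterministic finite configuration, or more precisely that the jump measure is carried by finitely many predictable curves). The point is that in Stage three one can then take $\tilde P = P$ directly: when the jumps of $\bar S$ take values in a finite set, a separating measure is automatically a $\sigma$-martingale measure --- the ``$\sigma$-localising'' and measure-change steps in the Delbaen--Schachermayer argument become unnecessary because the obstruction they remove (unbounded jump sizes preventing the drift from being absorbed without changing the measure) is absent. Concretely, I would show that for $\bar S$ with finitely supported L\'evy measure the separating condition $E[\bar Y_T]\le\bar Y_0$ already forces the predictable characteristics of $\bar S$ to satisfy the drift-restriction identifying $P$ itself as a $\sigma$-martingale measure, so $(1+X)/(1+\widehat X)$ is a local $P$-martingale and $(iv)$ holds with $\tilde P = P$; since Stage one's truncation is also vacuous here (finitely many jump sizes are bounded), no change of measure is needed at all.

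\textbf{Main obstacle.} I expect Stage two --- the self-contained construction of the supermartingale num\'eraire and, above all, the proof that the candidate integrand $\widehat H$ lies in $L(S)$ --- to be the genuinely hard part. Writing $\widehat H$ down from the characteristics $(B,C,\nu)$ of $S$ is formal; proving $S$-integrability without circularly invoking the conclusions of \cite{Kara-Kard} or \cite{Takaoka} requires the new argument, and this is where the ``laws of large numbers for stochastic integrals with truncated integrands'' of Appendix \ref{LLN} and the boundedness-in-probability machinery of Appendix \ref{boundSE} must do real work. A secondary technical point is making the two total-variation budgets (Stage one's truncation of large jumps, Stage three's application of the separating-measure theorem) compose cleanly so that the final $\tilde P$ is genuinely within $\e$ of the \emph{original} $P$; this is routine but must be stated carefully since the separating-measure theorem is applied in the intermediate (truncated) model.
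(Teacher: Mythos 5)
Your overall route matches the paper's: the three easy implications, then a three-stage argument for $(i)\Rightarrow(iv)$ (reduce to a convenient integrability assumption by an arbitrarily small total-variation change of measure; construct the supermartingale num\'eraire under this assumption via the local-characteristics optimisation; change num\'eraire so that $P$ is a separating measure for the new semimartingale and invoke the Delbaen--Schachermayer result to land within $\e$ of $P$), and a special treatment of the finite-support case. Your $(ii)\Rightarrow(i)$ argument is slightly more direct than the paper's, which instead deduces boundedness in probability of $\cX^1_T$ and cites the abstract equivalence NUPBR $\Leftrightarrow$ NA$_1$ (Lemma \ref{A4}); both are fine.

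Two remarks on the points where you diverge.
\emph{Change of num\'eraire.}\ You phrase Stage three as discounting $S$ by $1+\widehat X$ and adjoining $1/(1+\widehat X)$ as an extra asset. The paper works instead with a $d$-dimensional semimartingale $S^g$ obtained from Yor's formula, namely $S^g = S - (cg)\cdot A - \sum_{s\le\cdot} \frac{g_s\Delta S_s}{1+g_s\Delta S_s}\Delta S_s$, and shows directly (Lemma \ref{lshift} and the surrounding discussion) that $1+\cX^1(S^g) = \cE^{-1}(g\cdot S)\,(1+\cX^1(S))$; this avoids the dimension increase, and the explicit local characteristics of $S^g$ are exactly what is needed to feed into the separating-measure machinery. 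Either packaging works, but you should be aware that the paper's version is more economical and that it is this explicit $S^g$ to which the separating-measure theorem is applied.

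\emph{Finite-support case.}\ This is where you take a genuinely different route from the paper, and it is worth being careful. The paper's argument is purely at the level of the deterministic optimisation: when the L\'evy measures $K_{\omega,t}$ are concentrated on finitely many points, the admissibility set $D = \{v : vx > -1 \ K\text{-a.e.}\}$ is \emph{open}, so the maximiser $v^0$ of $\Psi$ is an interior point and the first-order inequality $F(f,g)\le 0$ of Lemma \ref{superm} becomes an \emph{equality} $F(f,g)=0$; Lemma \ref{superm} then yields the local $P$-martingale property directly, with no reference to the separating-measure theorem at all. You instead argue that for a process with finitely-supported jump measure a separating measure is automatically a $\sigma$-martingale measure. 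This claim is in fact correct --- inspecting the proof of Lemma \ref{crucial} shows that finite support forces both $r>-\infty$ and $R<\infty$, so the two one-sided inequalities on $b^h + K(xI_{\{|x|>1\}})$ derived from the separating condition collapse to an equality and $Y\equiv 1$ belongs to $\Gamma^\e\cap\Gamma$ --- but it is \emph{not} the reasoning you give (``unbounded jump sizes preventing the drift from being absorbed''), and it relies on the internal structure of the separating-measure argument rather than being a generic fact one can invoke. The paper's interior-maximiser argument is both shorter and better localised; if you pursue your route you must actually carry out the $r,R$ analysis, not merely assert that the $\sigma$-localisation ``becomes unnecessary.''

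Finally, you correctly flag the main obstacle: the self-contained construction of the num\'eraire and, in particular, the $S$-integrability of the candidate integrand $g$, which the paper handles via Lemmas \ref{LLN1}--\ref{LLN2} and Proposition \ref{expbounded}. Your outline is consistent with this, but it is the piece of the argument that your blind proposal necessarily leaves open.
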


\begin{rem}
It is straightforward to check that statements $(ii)$, $(iii)$, and $(iv)$ of Theorem \ref{main-I} are equivalent to the same conditions where ``for every $X\in \cX^1$'' is replaced by ``for every $X\in \cX^1_>$,'' a fact that will be used freely during its proof.
\end{rem}

Theorem \ref{main-I} is formulated in ``pure'' language of stochastic analysis. In the context of Mathematical Finance, the following interpretations regarding its statement should be kept in mind:
\begin{itemize}
	\item Condition (i) states that any non-trivial contingent claim $\xi\ge 0$ has a strictly positive superreplication price. This is referred to as condition NA$_1$ (No  Arbitrage of the 1st Kind); it is equivalent to the boundedness in probability of the set 
$\cX^1_T$, or, alternatively, to condition NAA$_1$ (No  Asymptotic Arbitrage of the 1st Kind)---see  Appendix A.

	\item The process $Y$ in statement $(ii)$ (resp., in statement $(iii)$) is called a strictly positive \emph{supermartingale deflator} (resp., \emph{local martingale deflator}).

	\item The process $1 + \tilde H \cdot S$ with the property in statement $(iv)$ is called the \emph{local martingale num\'eraire under the probability $\tilde{P}$}.
\end{itemize}

With the above terminology in mind, we may reformulate Theorem \ref{main-I} in a more appealing way as the equivalence of the following statements:

{\it
\begin{enumerate}
	\item[(i)] There is no arbitrage of the first kind in the market (NA$_1$).

	\item[(ii)] There exists a strictly positive supermartingale deflator.

	\item[(iii)] There exists a strictly positive local martingale deflator.
  
 \item[(iv)] In any neighbourhood (in total variation distance) of $P$ there exists a probability measure $\tilde P\sim P$ admitting a local martingale num\'eraire.
\end{enumerate}
}

\subsection{Proof of easy implications of Theorem \ref{main-I}}

The arguments establishing the implications $(iv) \Rightarrow (iii) \Rightarrow (ii) \Rightarrow (i)$ in Theorem \ref{main-I} are elementary and well known; however, for completeness of presentation,  we shall briefly reproduce them here.    

Assume statement $(iv)$, and in its notation set $\tilde Z := 1 / (1 + \tilde H  \cdot S)$, and let $Z$ be the density process of $\tilde P$ with respect to $P$. For any  $X\in \cX^1$, $\tilde Z(1+X)$ is a local $\tilde P$-martingale.  Hence, with $Y := Z \tilde Z$, the process $Y (1+X)$ is 
a local $P$-martingale. Implications $(iv) \Rightarrow (iii)$ follows.

Since a positive local martingale is a supermartingale, implication  $(iii) \Rightarrow (ii)$ is obvious. 

To establish implication $(ii) \Rightarrow (i)$,  suppose  that $Y$ is a strictly positive supermartingale deflator. It follows that $EY_T(1+X_T)\le 1$ holds for every $X\in \cX^1$. Hence, the set $Y_T(1+\cX^1_T)$ 
is bounded in $L^1$, and, \emph{a fortiori}, bounded in probability. Since $Y_T > 0$, the set $\cX^1_T$ is also bounded in probability. The latter property is equivalent to condition NA$_1$---see Appendix A. 

\smallskip

In contrast to the above elementary arguments, the proof of implication $(i) \Rightarrow (iv)$ is significantly more involved and requires intensive use of stochastic calculus. The rest of the paper, including Appendices \ref{existsigma} and \ref{boundSE}, is devoted to the proof of implication $(i) \Rightarrow (iv)$; the eventual argument is given in \S \ref{subsec:proof_final}. Before going through the proof, it may be helpful to consult Appendix \ref{sec:arbitrage} for a discussion of various  forms of condition NA$_1$, and relations with other concepts of arbitrage theory.

\section{Proof of Implication $(i) \Rightarrow (iv)$ of Theorem \ref{main-I}: Preliminaries}
\label{roadmap}

\subsection{Local characteristics of the discounted price-processes and simplifying hypotheses} \label{subsec:canon_decomp}

As a first step, we shall introduce notation, recall basic facts and make a certain reduction which will allow us to work under (slightly) simplifying hypotheses.

Following the development of ideas from \cite{JS:87}, let $(B^h, \, C, \, \nu)$ be the triplet of predictable characteristics of the semimartingale $S$, where the latter is written in its canonical form
$$
S=S_0+S^c+xh*(\mu-\nu)+x\bar h*\mu +B^h.
$$
Above, $h=I_{\{|x|\le 1\}}$  denotes the chosen truncation function and $\bar h := 1 - h = I_{\{|x|> 1\}}$, $S^c$ is a continuous 
local martingale and $B^h$ is a predictable process of bounded variation. 
It is convenient to work with a ``local'' form of the triplet. One can always choose a predictable increasing c\`adl\`ag process $A$ with $A_0=0$ and $A_T\le 1$
such that
$$B^h=b^h\cdot A,  \qquad \langle S^c\rangle=c\cdot A,  \qquad \nu(dt,dx)=dA_tK_{t}(dx),
$$
$b^h$ is predictable,
$K_{t}(dx) = K_{\omega,t}(dx)$ is a transition kernel from
$(\Omega\times \R_+,{\mathcal P})$
into
$({\R}^d,{\mathcal B}^d)$ with $\int (|x|^2\wedge 1)K_{\omega,t}(dx)<\infty$;
if $\Delta A_t(\omega)>0 $ then we have 
$\Delta A_t(\omega)K_{\omega,t}(\R^d)\le 1$, 
%and $b_t(\omega)=\int h(x)K_{\omega,t}(dx)$,
see \cite[II.2.9]{JS:87}.
Analogously to the theory of processes with independent increments, the measures $K_{\omega,t}$ will be referred to as \emph{L\'evy measures}. 

Let $\bar {\mathcal P}$ be the completion of the $\sigma$-algebra
${\mathcal P}$ with respect to the measure
$$
m(d\omega,dt):=P(d\omega)dA_t(\omega).
$$
%As usual, $a_t:=\nu(\{t\},{\bf R}^{d})$.
We use the notation  
$$
K_{\omega,t}(Y):=\int Y(x)K_{\omega,t}(dx)
$$
and in places we omit $\omega$ or $(\omega,t)$ to alleviate heavy notation. %As usual, we set $S^*_t:=\sup_{s\le t}|S_s|$. 

\begin{lemm}
\label{special}
For any $\e>0$, there exists a probability measure $\tilde P\sim P$ with bounded density $d\tilde P/dP$ such that $| \tilde P-P |_{TV} \le \e$,
and $|x|^2*\mu_T \in L^1(\tilde P)$.
\end{lemm}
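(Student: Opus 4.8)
The plan is to produce $\tilde P$ via an explicit bounded density obtained by truncating the ``large jump'' contribution of $S$. The only obstruction to $|x|^2 * \mu_T \in L^1(P)$ is the mass that $\mu$ puts on jumps of large size; since $S$ is a semimartingale, $|x|^2 \wedge 1$ is always $\mu$-integrable in expectation (equivalently $(|x|^2\wedge 1)*\nu_T<\infty$ a.s.), so only $|x|^2 I_{\{|x|>1\}} * \mu_T$ can fail to be integrable. The idea is to find a set $D\in\cF$ on which this quantity is controlled, with $P(D)$ as close to $1$ as we like, and then let $\tilde P$ be $P$ conditioned (in a smoothed, strictly-positive way) to $D$.

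First I would note that $|x|^2 I_{\{|x|>1\}}*\mu_T = \sum_{t\le T} |\Delta S_t|^2 I_{\{|\Delta S_t|>1\}}$ is a finite random variable: on $[0,T]$ a c\`adl\`ag path has only finitely many jumps exceeding size $1$, so this sum is a.s.\ finite (it is an $\cF_T$-measurable, $[0,\infty)$-valued random variable, not necessarily integrable). Hence the sets $D_n := \{|x|^2 I_{\{|x|>1\}}*\mu_T \le n\}$ increase to a set of full probability, so we may pick $n$ with $P(D_n) > 1 - \e/2$; write $D := D_n$. Now set
$$
\frac{d\tilde P}{dP} := \frac{I_D + \delta I_{D^c}}{P(D) + \delta P(D^c)},
$$
for a small constant $\delta \in (0,1]$ chosen below. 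This density is strictly positive and bounded (by $1/(P(D)+\delta P(D^c)) \le 1/P(D)$), so $\tilde P \sim P$ with bounded density, and $\|\tilde P - P\|_{TV}$ is a continuous function of $\delta$ that equals $P(D^c) < \e/2$ at $\delta = 1$ and tends to $\ldots$ well, in any case for $\delta$ small enough one checks directly that $\|\tilde P - P\|_{TV} \le \e$: indeed $\tilde P(A) - P(A)$ can be bounded in absolute value by a fixed multiple of $P(D^c) + \delta$, which is $\le \e$ once $\delta$ is small.

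It remains to check $|x|^2*\mu_T \in L^1(\tilde P)$. Decompose $|x|^2 * \mu_T = (|x|^2\wedge 1)*\mu_T + |x|^2 I_{\{|x|>1\}}*\mu_T$. For the second term, $E_{\tilde P}\big[|x|^2 I_{\{|x|>1\}}*\mu_T\big] \le \frac{1}{P(D)+\delta P(D^c)}\big( E_P[I_D \cdot (|x|^2 I_{\{|x|>1\}}*\mu_T)] + \delta E_P[|x|^2 I_{\{|x|>1\}}*\mu_T]\big)$; the first expectation is $\le n$ by definition of $D$, and the second, although possibly infinite, is multiplied by $\delta$ — so this does not immediately work unless $E_P[|x|^2 I_{\{|x|>1\}}*\mu_T]<\infty$ already. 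To repair this, I would instead choose the density supported to kill the bad set entirely up to a harmless perturbation: replace $\delta I_{D^c}$ by $\delta \, g \, I_{D^c}$ where $g := 1 \wedge \big(1 / (|x|^2 I_{\{|x|>1\}}*\mu_T)\big)$, a bounded strictly positive $\cF_T$-measurable weight, so that $g\cdot(|x|^2 I_{\{|x|>1\}}*\mu_T) \le 1$ everywhere; then $E_{\tilde P}[|x|^2 I_{\{|x|>1\}}*\mu_T]$ is finite, being a sum of $E_P[I_D\cdot(\cdots)]\le n$ and $\delta E_P[g\cdot(\cdots)]\le \delta$. The first term $(|x|^2\wedge 1)*\mu_T$ is handled by a localisation/compensation argument: $(|x|^2\wedge 1)*\mu$ has compensator $(|x|^2\wedge 1)*\nu$, which is a predictable increasing process with $A_T\le 1$ bounding $A$ and $(|x|^2\wedge 1)*\nu_T < \infty$ a.s.; by the same truncation-of-a-finite-random-variable device (enlarge $D$ so that also $(|x|^2\wedge 1)*\nu_T \le n$ on $D$, which we may since that random variable is a.s.\ finite) and the fact that expectation under $\tilde P$ of $(|x|^2\wedge 1)*\mu_T$ equals expectation under $\tilde P$ of its $\tilde P$-compensator, one gets $L^1(\tilde P)$-integrability; note the $\tilde P$-compensator differs from the $P$-compensator only through the (bounded) density process, so boundedness of $d\tilde P/dP$ is exactly what makes this step go through.

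The step I expect to be the main obstacle is this last one: ensuring that after the measure change the \emph{predictable} characteristic $(|x|^2\wedge 1)*\nu$ is still controlled, since $\nu$ transforms under a Girsanov-type change by a predictable density and a priori one loses the a.s.\ bound. The clean way around it — and what I would actually write — is to observe that integrability of $|x|^2 * \mu_T$ under $\tilde P$ is equivalent to integrability of the $\tilde P$-compensator $|x|^2 * \nu^{\tilde P}_T$, and that because $d\tilde P/dP$ is bounded, $E_{\tilde P}[|x|^2*\mu_T] = E_P[(d\tilde P/dP)\, |x|^2*\mu_T] \le C\, E_P[|x|^2*\mu_T]$ whenever the right side is finite; but it need not be, which is the whole point. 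So the honest route is the one above: engineer the set $D$ (via a.s.\ finiteness of $|x|^2 I_{\{|x|>1\}}*\mu_T$ together with the standing semimartingale fact $(|x|^2\wedge 1)*\nu_T<\infty$) to carry all but $\e$ of the mass, put the leftover mass on a density weighted by a bounded function that forces the offending random variable to be bounded there, and conclude by direct computation of $E_{\tilde P}[|x|^2*\mu_T]$ term by term.
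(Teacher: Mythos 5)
Your basic idea---truncate an a.s.\ finite random variable and use a bounded density to make it integrable under $\tilde P$---is the right one, and it is essentially what the paper does, but the paper's implementation is cleaner and avoids the two places where your write-up runs into trouble. The paper simply takes $Z^n_T := c_n\big(1 + n^{-1}\,|x|^2*\mu_T\big)^{-1}$ with $c_n$ a normalising constant. This is a \emph{soft} truncation applied to the \emph{whole} random variable $\xi := |x|^2*\mu_T$ in one step: since $Z^n_T\,\xi \le c_n n$ is bounded, $E_{\tilde P}[\xi] = E_P[Z^n_T\,\xi] < \infty$ immediately, and since $Z^n_T \to 1$ pointwise (with $c_n\to 1$, $Z_T^n$ uniformly bounded), total-variation closeness for large $n$ follows by dominated convergence. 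There is no need to split $\xi$ into small-jump and large-jump parts, because $\xi = |x|^2*\mu_T \le [S,S]_T$ is a.s.\ finite for any semimartingale; you only record a.s.\ finiteness of the large-jump part and then reach for a compensator argument for the rest, which is where the gap appears.

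The genuine gap is in your treatment of $(|x|^2\wedge 1)*\mu_T$. You propose to deduce its $\tilde P$-integrability by comparing with its $\tilde P$-compensator and assert that ``the $\tilde P$-compensator differs from the $P$-compensator only through the (bounded) density process, so boundedness of $d\tilde P/dP$ is exactly what makes this step go through.'' That is not correct: by Girsanov's theorem for random measures, the $\tilde P$-compensator of $\mu$ is $Y\cdot\nu$, where $Y$ is the Jacod predictable density (a conditional expectation of $Z_t/Z_{t-}$ given the jump), and even with $Z_T$ bounded, $Y$ need not be bounded nor controllable in the way you need. Boundedness of $d\tilde P/dP$ only gives $E_{\tilde P}[(|x|^2\wedge 1)*\mu_T] \le C\,E_P[(|x|^2\wedge 1)*\mu_T]$, and the right-hand side may be infinite---which, as you yourself note, is the whole point. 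Similarly, bounding $(|x|^2\wedge 1)*\nu_T$ on your set $D$ bounds the $P$-compensator, not the integral $(|x|^2\wedge 1)*\mu_T$ itself. The fix is to scrap the decomposition and the compensator detour entirely: observe once and for all that $\xi = |x|^2*\mu_T = [S,S]_T - \langle S^c\rangle_T < \infty$ a.s., and then either use the paper's soft truncation density $c_n(1+n^{-1}\xi)^{-1}$, or, if you prefer your hard-truncation route, take $D := \{\xi \le n\}$ and weight $g := 1\wedge(n/\xi)$ on $D^c$, so that $g\,\xi \le n$ \emph{everywhere}; this closes the argument by a one-line computation of $E_{\tilde P}[\xi]$. (A minor slip: at $\delta=1$ your candidate density is identically $1$, so the total-variation distance is $0$, not $P(D^c)$; the correct observation is that as $\delta\downarrow 0$ the distance increases towards something bounded by $P(D^c) < \e/2$, so any $\delta \in (0,1]$ works.)
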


\begin{proof}
For all $n$, Consider the bounded probability density $Z^n_T:=c_n(1+n^{-1} |x|^2*\mu_T)^{-1}$, where $c_n$ is an appropriate normalizing constant. The probability measures $P^n=Z^n_T P$ for sufficiently large $n$ meet the requirements of the statement of the lemma.
\end{proof}

Condition NA$_1$ is invariant under equivalent changes of probability. Due to Lemma \ref{special} above, for the purposes of proving implication $(i)\Rightarrow (iv)$ of Theorem \ref{main-I} we may assume that $$E|x|^2*\nu_T=E|x|^2*\mu_T<\infty$$ holds. While such assumption would lead to slightly simpler arguments, we also wish to show that $\tilde P=P$ in the case where $m$-a.e. L\'evy measure $K_{\omega,t}(.)$ is concentrated on a finite number of points. For this reason, we shall work under the assumption
\begin{equation} \label{ass:sigma-special}
K(|x|^2\wedge |x|)<\infty, \quad m\text{-a.e.},
\end{equation}
which is weaker than $E|x|^2*\nu_T < \infty$. When the L\'evy measures have finite support, \eqref{ass:sigma-special} is always fulfilled; as we shall see later, no further probability measure change will be needed. In the case of general L\'evy measure structures, we use Lemma \ref{special} in order to ensure that \eqref{ass:sigma-special} is valid; additionally, another change of measure will be required during the concluding step of the proof in Lemma \ref{lem:final}.

Under the force of \eqref{ass:sigma-special}, note that the predictable process
\[
b := b^h + K\big(xI_{\{|x|>1\}}\big)
\]
is well defined. If $|b| \cdot A_T < \infty$, then $S$ is a special semimartingale and $B := b \cdot A$ is the finite variation process in its canonical decomposition. However, it could happen that $P (|b| \cdot A_T = \infty) > 0$; in effect, \eqref{ass:sigma-special} can be loosely interpreted as a ``$\sigma$-special'' property of $S$.

%\begin{rem}[On stochastic integration]
%The construction of stochastic integral with respect to 
%a  semimartingale for non-locally-bounded integrands is rather involved. By definition, a semimartingale is an adapted c\`adl\`ag  process $S$ admitting the decomposition $S=M+A$, where $M$ is a local martingale and $A$ is a process of finite variation. It would appear that the natural definition of the set $L(S)$ of integrands for $S$ would be the intersection of the sets  $L_{mart}(M)$ and  $L_{Leb}(A)$, containing respectively predictable processes that are integrable in the sense of integration with respect to the local martingale  $M$ and Lebesgue-integrable with respect to $A$. Complications arise because the aforementioned decomposition is not unique and the intersection depends on the chosen one. The most common definition is the following: a predictable process $H\in L(S)$ if there exists a decomposition $S=M^H+A^H$ such that $H\in L_{mart}(M^H)\cap L_{Leb}(A^H)$; in such a  case  $H\cdot S=H\cdot M^H+H\cdot A^H$. One  should be cautious about seemingly innocent manipulations with stochastic integrals with respect to semimartingales: even in the case $S^c=0$, the condition $H\in L(S)$ does not imply that $H$ is integrable with respect to the components of the canonical decomposition, and the additivity of the integral has to be carefully established. 
%\end{rem}

\subsection{Statement $(iv)$ and ratios of stochastic exponentials}
The set $1+\cX^1_>$ coincides with the set of stochastic exponentials of integrals with respect to $S$:  
$$
1+\cX^1_> =\{\cE(f\cdot S): \ f\in L(S),\ f\Delta S>-1\}.  
$$
Indeed, a stochastic exponential corresponding to the integrand $f$ such that $f\Delta S>-1$
is strictly positive, as is also its left limit, and satisfies the linear equation 
$$
\cE(f\cdot S)=1+\cE_-(f\cdot S)\cdot (f\cdot S)= 1+(\cE_-(f\cdot S) f)\cdot S. 
$$
Thus, $\cE(f\cdot S)\in\cX^1_>$. Conversely, if the process $V=1+H\cdot S$ is such that $V > 0$ and $V_->0$, then
$$
V=1+(V_-V_-^{-1})\cdot V=1+V_-\cdot (V_-^{-1}\cdot (H\cdot S))=1+V_-\cdot ((V_-^{-1}H)\cdot S);
$$
that is, $V=\cE(f\cdot S)$  where $f=V_-^{-1}H$. 

In view of the previous discussion, condition $(iv)$ may be expressed as follows: 

{\it
\begin{enumerate}
	\item[(iv)] for any $\e>0$ there exist $\tilde P\sim P$ with $| \tilde P-P |_{TV} <\e$ and $g\in L(S)$ with $g\Delta S>-1$  such that $\cE(f\cdot S)/\cE(g \cdot S)$ is a local $\tilde P$-martingale for every $f\in L(S)$ with $f \Delta S>-1$.
\end{enumerate}
}

Let now arbitrary $f \in L(S)$ and $g\in L(S)$ with $f\Delta S>-1$ and $g \Delta S>-1$. Straightforward calculations using Yor's product formula $\cE(Z)\cE(\tilde Z)=\cE(Z+\tilde Z+[Z,\tilde Z])$ for semimartingales $Z$ and $\tilde Z$, shows that the ratio $\cE(f\cdot S) / \cE(g\cdot S)$ may be represented  as follows: 
\beq
\label{R0}
\frac{\cE(f\cdot S)}{\cE(g\cdot S)}=\cE( (f-g) \cdot S^g), 
\eeq
where the semimartingale $S^g$, depending only on $S$ and $g$ and \emph{not} on $f$, is given by
\beq
\label{R}
S^g = S -\langle S^c, g\cdot S^c\rangle - \frac{gx}{1+gx}x*\mu = S - (c g) \cdot A - \sum_{s\le .}\frac {g_s\Delta S_s}{1+g_s\Delta S_s}\Delta S_s.
\eeq
\begin{lemm}
\label{superm}
Suppose that $K(|x|^2\wedge |x|)<\infty$, $m$-a.e. Fix
$f \in L(S)$ and $g\in L(S)$ with $f\Delta S>-1$ and $g \Delta S>-1$, and assume that
 \beq \label{eq:integr_techn}
K\Big(\frac{(gx)^2}{1+gx}\Big)<\infty. 
\eeq
Define the predictable process
\beq
\label{Function}
F(f,g):=(f-g)(b-cg)-K\Big((f-g)x\frac{gx}{1+gx}\Big). 
\eeq 
If $F(f,g)\le 0$ $m$-a.e., then $\cE(f\cdot S) / \cE(g\cdot S)$ is a supermartingale. If $F(f,g)= 0$ $m$-a.e., then $\cE(f\cdot S) / \cE(g\cdot S)$ is a local martingale (and a supermartingale).
%Then $(a)$ $\cE(R)$ is a supermartingale if and only if $F(f,g)\le 0$ $m$-a.e.;  
%$(b)$ $\cE(R)$ is a local martingale if and only  $F(f,g)= 0$ $m$-a.e.
\end{lemm}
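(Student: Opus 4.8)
The plan is to use the ratio identity \eqref{R0} to reduce the claim to a statement about the stochastic exponential of the single semimartingale $N:=(f-g)\cdot S^g$, with $S^g$ as in \eqref{R}, and then to identify the predictable finite‑variation part of $N$ with $F(f,g)\cdot A$.

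First I would record the elementary facts about stochastic exponentials that drive the conclusion. Since $g\Delta S>-1$ and $f\Delta S>-1$, one has $\Delta N=(f-g)^\top\Delta S/(1+g^\top\Delta S)>-1$, so $Z:=\cE(N)$ is strictly positive with $Z_->0$ and solves $Z=1+Z_-\cdot N$, where $Z_-$ is c\`agl\`ad, hence locally bounded. Consequently: (a) if $N$ is a local martingale (or merely a $\sigma$-martingale), then $Z=1+Z_-\cdot N$ is a positive $\sigma$-martingale, hence a local martingale; (b) if $N=L+D$ with $L$ a local martingale and $D$ predictable, non‑increasing, of finite variation with $D_0=0$, then $Z=1+Z_-\cdot L+Z_-\cdot D$, where $Z_-\cdot L$ is a local martingale and $Z_-\cdot D$ is non‑increasing; since $Z\ge 0$ forces $Z_-\cdot L\ge -1$, the local martingale $Z_-\cdot L$ is a supermartingale, and therefore so is $Z$.

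The heart of the matter is to establish the decomposition
\[
N=(f-g)\cdot S^c+\Big(\frac{(f-g)^\top x}{1+g^\top x}\Big)*(\mu-\nu)+F(f,g)\cdot A,
\]
the first two summands forming a local martingale. This follows by inserting the canonical representation $S=S_0+S^c+xh*(\mu-\nu)+x\bar h*\mu+B^h$ of \S\ref{subsec:canon_decomp} into \eqref{R} and collecting terms: the three pieces of $F(f,g)$ in \eqref{Function} come, respectively, from the drift $B^h+(x\bar h)*\nu=b\cdot A$ of $S$ (with $b=b^h+K(x\bar h)$ well defined thanks to \eqref{ass:sigma-special}); from the term $-\langle S^c,g\cdot S^c\rangle=-(cg)\cdot A$ in \eqref{R}; and from the term $-(gx\,x/(1+gx))*\mu$ of \eqref{R}, which, after contraction with $f-g$ and compensation, contributes $-K\big((f-g)x\,gx/(1+gx)\big)\cdot A$ while simultaneously turning the jumps $(f-g)^\top\Delta S$ into $(f-g)^\top\Delta S/(1+g^\top\Delta S)$. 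Here \eqref{ass:sigma-special} and \eqref{eq:integr_techn} are used precisely to make all the functions of $x$ occurring above $K$-integrable $m$-a.e.\ — in particular $K(|gx|\,|x|/(1+gx))<\infty$ — so that $F(f,g)$ is a well‑defined predictable process; verifying these integrability bounds is a routine estimate obtained by splitting the range of $x$ according to whether $|x|$ and $|g^\top x|$ are small or large.

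I expect the main obstacle to be that $S$ need not be a special semimartingale (\eqref{ass:sigma-special} makes it only ``$\sigma$-special'') and that $f-g$ is a general vector integrand, so that quantities such as $(f-g)^\top x$ need not lie in $G_{\mathrm{loc}}(\mu)$; hence the displayed decomposition cannot in general be written globally. I would handle this by $\sigma$-localisation: choose an increasing sequence of predictable sets $\Sigma_n\uparrow\Omega\times[0,T]$, up to an $m$-null set, on which $|f|$, $|g|$, $|b^h|$, $K(|x|^2\wedge 1)$, $K(|x|I_{\{|x|>1\}})$ and $K((gx)^2/(1+gx))$ are all bounded — each of these being predictable and, under \eqref{ass:sigma-special}–\eqref{eq:integr_techn}, finite $m$-a.e. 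On each $\Sigma_n$ the integrand $I_{\Sigma_n}(f-g)$ is bounded and all the compensators above become genuine (locally integrable) objects, so the displayed decomposition holds for $I_{\Sigma_n}\cdot N=(I_{\Sigma_n}(f-g))\cdot S^g$, with finite‑variation part $(I_{\Sigma_n}F(f,g))\cdot A$. If $F(f,g)\le 0$ $m$-a.e.\ this is non‑increasing, so by (b) — applied $\sigma$-locally, additionally intersecting $\Sigma_n$ with $\{Z_-\le n\}$ — $Z=\cE(N)$ is a $\sigma$-supermartingale, hence, being non‑negative, a supermartingale. If $F(f,g)=0$ $m$-a.e., then $N$ is a $\sigma$-martingale and, by (a), $Z$ is a local martingale (and, being positive, a supermartingale).
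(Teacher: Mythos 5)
Your proof is correct and follows essentially the same route as the paper: reduce via \eqref{R0} to $\cE((f-g)\cdot S^g)$, identify the predictable finite-variation part of $(f-g)\cdot S^g$ as $F(f,g)\cdot A$, and conclude by $\sigma$-localisation that the exponential is a (local/super)martingale. The only cosmetic difference is that the paper implements the $\sigma$-localisation in one stroke by integrating against the explicit $(0,1]$-valued predictable process $\theta=1/(1+|f|+|g|+K(|x|^2\wedge|x|)-F(f,g))$, which simultaneously bounds all the offending quantities, rather than by choosing an exhausting sequence of predictable sets $\Sigma_n$ as you do; both devices encode the same $\sigma$-supermartingale/$\sigma$-martingale statement.
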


\begin{proof}
We first show that the integral with respect to $K$ appearing in \eqref{Function} is well defined. In view of \eqref{eq:integr_techn}, it suffices to show that $ (fx)(gx) / (1+gx)$ is $K$-integrable. On $\{x\in \R^d:\ gx\ge -1/2\}$,
$$
\frac{|(fx)(gx)|}{1+gx}\le |f||x|I_{\{|x|> 1\}} +|f||g||x|^2I_{\{|x| \le 1\}} \leq |f| (1 + |g|) \big( |x| \wedge |x|^2 \big)
$$
holds, while on $\{x\in \R^d:\ -1<gx\le -1/2,\ -1<fx\le 0\}$ it holds that 
$$
\frac{|(fx)(gx)|}{1+gx}\le \frac{|gx|}{1+gx} \leq 2 \frac{|gx|^2}{1+gx}. 
$$ 
Thus, using \eqref{eq:integr_techn}, on each of the previous two sets the function $(fx)(gx) / (1+gx)$ is $K$-integrable. 
On the other hand, on the set $\{x\in \R^d:\ -1<gx\le -1/2,\ fx>0\}$ the function $ (fx)(gx) / (1+gx)$ is negative. Therefore, the integral with respect to $K$ in \eqref{Function} is well defined, though it could be equal to $-\infty$; however, such a case is excluded under the force of the inequality $F(f,g)\le 0$.

Integrating the $(0,1]$-valued predictable process $\theta:=1/(1+|f|+|g|+K(|x|^2\wedge |x|) - F(f,g))$ with respect to $(f-g) \cdot S^g$ of (\ref{R0}) and (\ref{R}), and rearranging terms we obtain the representation 
$$
\theta (f-g) \cdot S^g = M^\theta + \theta F(f,g) \cdot A,
$$
where $\theta F(f,g) \cdot A$ is a nonincreasing bounded (recall that $A_T \leq 1$) process and 
$$
M^\theta := \theta(f-g)\cdot S^c + \theta\frac{(f-g)x}{1+gx} * (\mu-\nu)\in \cM_{loc}. 
$$
Thus, $(f-g) \cdot S^g$ is a $\sigma$-supermartingale with $(f-g) \Delta S^g >-1$ and, therefore, $(f-g) \cdot S^g$ is a local supermartingale, see \cite{GK}. Since nonnegative local supermartingales and supermartigales, it follows that $\cE(f\cdot S) / \cE(g\cdot S) = \cE((f-g) \cdot S^g)$ is   
a supermartingale.   If $\theta F(f,g)= 0$, then $(f-g) \cdot S^g$ is a local martingale and so is $\cE(f\cdot S) / \cE(g\cdot S)$.
\end{proof}

\section{Existence of the Supermartingale Num\'eraire}
\label{sdeflator}

The aim of this section is to show that condition NA$_1$ implies the existence 
of $g \in L(S)$ with $g \Delta S > -1$ such that $\cE(f \cdot S)/\cE(g \cdot S)$ is a supermartingale for every $f \in L(S)$ with  $f \Delta S > -1$. Under our additional integrability assumption \eqref{ass:sigma-special}, we provide a semi-explicit expression for the integrand $g$  for which  the inequality $F(f,g)\le 0$ of Lemma \ref{superm} holds.

\begin{theo} \label{kara-kard}
Suppose that $K(|x|^2\wedge |x|)<\infty$, $m$-a.e. Under condition $NA_1$, there exists $g \in L(S)$ with $g \Delta S > -1$ such that $\cE(f \cdot S)/\cE(g \cdot S)$ is a supermartingale for every $f \in L(S)$ with  $f \Delta S > -1$. 
\end{theo}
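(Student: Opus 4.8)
The plan is to construct the integrand $g$ pointwise (in $(\omega,t)$) by solving, $m$-a.e., a one-period optimization problem on the L\'evy measure $K = K_{\omega,t}$, and then to verify that the resulting predictable process $g$ actually belongs to $L(S)$ by invoking condition NA$_1$ through the boundedness-in-probability results of Appendix \ref{boundSE}. For fixed $(\omega,t)$, one considers the concave functional
$$
\Phi(v) := v\cdot b - \tfrac12 v\cdot cv + K\big(\log(1 + vx) - vx\,I_{\{|x|\le 1\}} - (vx - vx\,I_{\{|x|\le 1\}})\big)
$$
(the ``local log-utility'' or ``Kelly'' objective), defined on the convex set of $v\in\R^d$ with $K(vx < -1)=0$, with the convention that the integral is $-\infty$ when $\log(1+vx)$ fails to be $K$-integrable. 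Under \eqref{ass:sigma-special} the linear term $v\cdot b$ is genuinely well defined. Strict concavity of $v\mapsto \log(1+vx)$, together with the quadratic term, gives uniqueness of the maximizer $g = g_{\omega,t}$ whenever it exists; a measurable-selection argument (the integrand of $\Phi$ is $\bar{\mathcal P}\otimes\mathcal B^d$-measurable) yields a predictable version. The first-order condition at the maximizer is precisely
$$
b - c g + K\!\Big(\frac{x}{1+gx} - x\,I_{\{|x|\le1\}}\Big) = 0,
$$
which, after adding and subtracting $K(gx\cdot x/(1+gx))$ appropriately and using the definition \eqref{Function}, is exactly $F(f,g)\le 0$ for all admissible $f$ (with equality when the supremum over $f$ of the directional derivative is attained, i.e.\ $F(f,g)=0$); concavity upgrades the first-order inequality to the global statement that $F(f,g)\le0$ $m$-a.e. for every competitor $f$. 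This is the step where Lemma \ref{superm} is applied, once the technical integrability \eqref{eq:integr_techn} is checked for this particular $g$ --- and that follows from finiteness of $\Phi(g)$, since $\log(1+gx) - gx/(1+gx) = \int_0^1 s(gx)^2/(1+sgx)^2\,ds \gtrsim (gx)^2/(1+gx)$ on $\{gx\ge 0\}$, while on $\{gx<0\}$ the term $(gx)^2/(1+gx)$ is controlled using $|x|\wedge|x|^2$ together with \eqref{ass:sigma-special}.

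The substantive obstacle is not the pointwise construction but the global integrability: one must show that the predictable process $g$ built by selection satisfies $g\in L(S)$, i.e.\ the stochastic integral $g\cdot S$ exists, equivalently $\cE(g\cdot S)\in\cX^1_>$. Here is where NA$_1$ enters decisively. If $g\notin L(S)$, one truncates: set $g^n := g\,I_{\Gamma_n}$ for an increasing sequence of predictable sets $\Gamma_n\uparrow$ (chosen so that $g^n\in L(S)$ and $g^n\Delta S>-1$, e.g.\ restricting to where $|g|\le n$ and the relevant characteristics are bounded). By construction $F(f,g^n)$ need not be signed, but $F(g^n,g^n)$ can be analyzed, and more to the point the stochastic exponentials $\cE(g^n\cdot S)$ form a set one wants to show is bounded in probability; the laws of large numbers of Appendix \ref{LLN} for stochastic integrals with truncated integrands are designed to handle exactly the passage $g^n\to g$. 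Combined with the characterization (Appendix \ref{boundSE}) that under NA$_1$ boundedness in probability of a family of stochastic exponentials is equivalent to boundedness of their arguments, one deduces that $g\cdot S$ is well defined --- if it were not, one could extract from $\{g^n\cdot S\}$ a subsequence witnessing an arbitrage of the first kind, contradicting (i). This is the heart of the matter and the place I expect to spend the most effort; it replaces the closedness/compactness arguments of \cite{Kara-Kard} with the characteristics-based estimates advertised in the introduction.

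Finally, once $g\in L(S)$ with $g\Delta S>-1$ is secured, one revisits the first-order conditions to confirm they hold $m$-a.e.\ in the form $F(f,g)\le 0$ for every $f\in L(S)$ with $f\Delta S>-1$: fix such an $f$, and for $\lambda\in(0,1]$ compare $\Phi(g)$ with $\Phi((1-\lambda)g + \lambda f)$, which is $\le \Phi(g)$ by maximality; dividing by $\lambda$ and letting $\lambda\downarrow0$, dominated/monotone convergence (justified again by \eqref{ass:sigma-special} and finiteness of $\Phi(g)$) yields $\int_{[0,T]} \big(F(f,g)\big)^+\,dA = 0$, i.e.\ $F(f,g)\le0$ $m$-a.e. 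Lemma \ref{superm} then gives that $\cE(f\cdot S)/\cE(g\cdot S)$ is a supermartingale, completing the proof of Theorem \ref{kara-kard}. The extraction of a predictable maximizer and the convergence arguments in this last step are routine given \eqref{ass:sigma-special}; the only genuinely delicate point remains the global existence $g\in L(S)$ addressed above.
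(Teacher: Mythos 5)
Your proposal follows the same overall plan as the paper: maximize the Kelly-type concave functional $\Psi(v)=bv-\tfrac12|c^{1/2}v|^2-K(vx-\log(1+vx))$ pointwise in $(\omega,t)$ to get a candidate $g$, obtain a predictable version by measurable selection, read off the supermartingale inequality $F(f,g)\le 0$ from first-order conditions, and only then use NA$_1$ together with the truncation/LLN machinery of the appendices to show $g\in L(S)$. All of this matches the paper's architecture, and your identification of the integrability step as the ``heart of the matter'' is correct.

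However, there is a genuine gap at the start: you never establish that the pointwise supremum of $\Psi$ is \emph{finite} and \emph{attained}. You write ``gives uniqueness of the maximizer $g=g_{\omega,t}$ whenever it exists,'' and then treat existence as given; but it is not, and in general it can fail. The paper devotes Lemma~\ref{no_imm_arb} and Proposition~\ref{pointwisemax} precisely to this point: one first shows that NA$_1$ (in fact, absence of immediate arbitrage) forces $m(I\neq\emptyset)=0$, where $I_{\omega,t}$ is the cone of ``profitable null-diffusion directions,'' and then, for $(\omega,t)$ with $I_{\omega,t}=\emptyset$, one proves that any maximizing sequence for $\Psi$ on $D\cap N^\perp$ must have a bounded subsequence (otherwise the normalized limit direction lands in $I$, a contradiction), after which Fatou's lemma gives attainment and $\Psi^0<\infty$. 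Without some argument of this type your $g$ is simply not defined on a set of positive $m$-measure, and the rest of the construction collapses. This is the single most substantial missing piece.

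Two smaller points. First, in the first-order-condition step you invoke ``dominated/monotone convergence'' to differentiate under the $K$-integral as $\lambda\downarrow 0$; the paper explicitly notes this cannot be justified in general, and instead uses Jensen's inequality for $\phi(y)=y-\log(1+y)$ plus Fatou to get only the one-sided bound $\partial\Psi(g;v-g)\ge F(v,g)$ --- which suffices since maximality gives $\partial\Psi(g;v-g)\le 0$. Your conclusion is right, but the claimed convergence theorem does not apply. Second, your remark that ``$F(f,g^n)$ need not be signed'' is true for general $f$ but is a red herring: what the argument actually needs (and has, essentially for free since $F(0,0)=0$ and $F(0,g)\le 0$) is $F(0,g^n)\le 0$ $m$-a.e., which makes $1/\cE(g^n\cdot S)$ a supermartingale and hence $\cE(g^n\cdot S)_T$ a $P$-bounded family under NA$_1$; this is the input into Proposition~\ref{expbounded} and then into the five-term decomposition of $g^n\cdot S_T$ in Proposition~\ref{global}. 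Your high-level description of this last step is right in spirit but would need that decomposition to become a proof.
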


It was shown in \cite{Kara-Kard} that condition NA$_1$ implies the existence of such supermartingale num\'eraire, without any additional assumptions. Since we ask that $K(|x|^2\wedge |x|)<\infty$, Theorem \ref{kara-kard} is a weaker result. However, it is all that we shall need in establishing Theorem \ref{main-I}, and we present in this section for completeness a proof with arguments that are somewhat simpler than the ones in  \cite{Kara-Kard}.

\smallskip

It is instructive to briefly explain how Theorem \ref{kara-kard} will be established in the following subsections. In \S \ref{supnumport} we show that NA$_1$ implies that the cone $I$ of potential relative arbitrage is empty. The fact that $I = \emptyset$ allows one to find solution to a certain concave maximisation problem in \S \ref{concave_max}, whose first order conditions imply the existence of a process $g$ with $g \Delta S > -1$ such that, in the notation of \ref{Function}, $F(f, g) \leq 0$ holds whenever $f \Delta S > -1$. When the L\'evy measures are finite, we argue in \S \ref{discrete} that in fact $F(f, g) = 0$ holds whenever $f \Delta S > -1$. In \S \ref{measur} we show that $g$ can be selected in a predictable way. The only remaining obstacle from constucting the wealth process $\cE (g \cdot S)$ needed in Theorem \ref{kara-kard} is $S$-integrability of $g$, which is established in \S \ref{Integrability}.

\subsection{Local characteristics under absence of immediate arbitrage} 
\label{supnumport}

Lemma \ref{no_imm_arb} that follows uses technical language and notation, but it expresses a simple and intuitive idea. Consider the following property at points $(\omega, t)$: there is a direction $v\in \R^d$ along which the diffusion degenerates, i.e., $c_t(\omega)v=0$, all possible jumps at the next instant are nonnegative and the remaining drift is also nonnegative; furthermore, the resulting investment is nontrivial, in the sense that either a strictly positive jumps is possible or the remaining drift is strictly positive (or both). If  the set of points  $(\omega, t)$ for which the previous property holds is not $m$-null, one can construct an arbitrage opportunity by taking positions in the risky assets proportional to the previous vectors $v$ (which could, of course, change over time and across different  states).

We proceed with formal arguments. Consider two set-valued mappings 
$\Omega \times [0, T] \ni (\omega,t)\mapsto N_{\omega,t}$ and $\Omega \times [0, T] \ni  (\omega,t)\mapsto I_{\omega,t}$, where
$$
N_{\omega,t}:=\{v\in \R^d:\ \ K_{\omega,t}(vx\neq 0)=0,\  c_t(\omega)v=0,\ vb_t(\omega)=0 \}
$$
represents \emph{null investments}, and
$$
I_{\omega,t}:=\{v\in \R^d: \ \ K_{\omega,t}(vx< 0)=0,\  c_t(\omega)v=0,\ vb_t(\omega)\ge K_{\omega,t}(vx)\}\setminus N_{\omega,t}. 
$$ 
The graphs  of the previous mappings are $\bar \cP\otimes \cB(\R^d)$-measurable. 

\begin{lemm} \label{no_imm_arb} Suppose that $K(|x|^2\wedge |x|)<\infty$, $m$-a.e.
Under condition NA$_1$, $m( I \neq \emptyset )=0$ holds. 
\end{lemm}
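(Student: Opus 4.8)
The plan is to argue by contradiction: assuming $m(I \neq \emptyset) > 0$, I will construct from the selection of vectors $v \in I_{\omega,t}$ an admissible integrand producing an ``immediate arbitrage,'' which violates the boundedness in probability of $\cX^1_T$ and hence condition NA$_1$.

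First, using a measurable selection theorem applied to the $\bar\cP \otimes \cB(\R^d)$-measurable graph of $I$, I would pick a predictable process $v = (v_t)_{t \in [0,T]}$ with $v_t(\omega) \in I_{\omega,t}$ on the set $\{I \neq \emptyset\}$ (and $v \equiv 0$ elsewhere), and I may normalise so that $|v| \le 1$, or even better truncate so that $v$ is bounded and the relevant integrals are finite. On the set $D := \{(\omega,t) : v_t(\omega) \in I_{\omega,t}\}$, by definition of $I_{\omega,t}$ we have $c v = 0$ (so $v \cdot S^c$ is evanescent on $D$), $K(vx < 0) = 0$ (so the jumps $v \Delta S = vx$ restricted to $D$ are nonnegative), and $v b \ge K(vx) \ge 0$, with the pair $(vb - K(vx), K(vx \neq 0))$ not identically zero in the appropriate sense because $v \notin N_{\omega,t}$. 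Restricting attention to $H := v I_D$, I then examine the integrand $H$ with respect to $S$: using the canonical decomposition and \eqref{ass:sigma-special}, $H \cdot S = H \cdot S^c + (Hx) h * (\mu - \nu) + (Hx)\bar h * \mu + (H b) \cdot A$, and on $D$ the continuous martingale part vanishes, while the jump part is nonnegative and the drift part $H b \cdot A - K(Hx) \cdot A$ combined with the compensated-jump contribution yields a nondecreasing process plus a nonnegative jump integral. One must be a little careful that $H \in L(S)$; here the integrability hypothesis $K(|x|^2 \wedge |x|) < \infty$ $m$-a.e., together with bounding $|v|$ and shrinking $D$ to a subset where $vb$, $K(vx)$, etc.\ are bounded, guarantees $H$ is $S$-integrable, and in fact one can arrange $H \cdot S$ to be a nonnegative nondecreasing process (hence $0$-admissible) with $\Delta(H\cdot S) \ge -1$.

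The key point is then that $m(D) > 0$ forces $(H \cdot S)_T$ to be strictly positive on a set of positive probability: either the drift term $(Hb - K(Hx)) \cdot A$ is strictly positive on a non-$m$-null subset, or the jump measure $K(Hx \neq 0)$ charges a non-$m$-null set, and in either case $\int_0^T dA_t$ over $D$ combined with the nonnegativity gives $P((H \cdot S)_T > 0) > 0$. Scaling $H$ by an arbitrarily large constant $n$ (still $0$-admissible, since $nH \cdot S = n(H\cdot S) \ge 0$ when $H \cdot S$ is nondecreasing and nonnegative — one may need to further truncate so that $nHx > -1$, but since $vx \ge 0$ on $D$ this is automatic) produces $n (H \cdot S)_T \in \cX^1_T - 1 \subset \cX^1_T$ wait — more carefully, $1 + n(H \cdot S) \in \cX^1$ and $n(H\cdot S)_T \to \infty$ on $\{(H \cdot S)_T > 0\}$, contradicting boundedness in probability of $\cX^1_T$, which by Appendix A is equivalent to NA$_1$.

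The main obstacle I anticipate is purely technical: ensuring that the selected $v I_D$ is genuinely $S$-integrable and that $v I_D \cdot S$ is a bona fide nonnegative nondecreasing process despite the interplay of the compensated small-jump integral with the drift. This requires a localisation/truncation argument — replacing $D$ by $D \cap \{|v| \le k,\ vb \le k,\ K(vx) \le k,\ K(|x|^2 \wedge |x|) \le k\}$ for large $k$ so that $m(D) > 0$ is preserved while all compensators are finite — and then carefully recombining $(Hx)h*(\mu - \nu) + (H b)\cdot A = (Hx)\bar h * \mu$-type bookkeeping so that the small-jump compensation $-K(Hx h)\cdot A$ is absorbed into $Hb \cdot A \ge K(Hx)\cdot A$, leaving $H\cdot S = (Hx) * \mu + (Hb - K(Hx))\cdot A \ge 0$ nondecreasing on $D$. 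Everything else is a direct unwinding of the definitions of $N_{\omega,t}$ and $I_{\omega,t}$ together with the measurable-selection and boundedness-in-probability facts already available.
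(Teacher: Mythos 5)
Your proposal is correct and follows the same overall route as the paper: take a measurable selector $H$ of a suitably truncated version of the set-valued mapping $I$ (vanishing off $\{I\neq\emptyset\}$), argue that $H\cdot S$ is a nonnegative process, observe that it is non-trivial (strictly positive at $T$ with positive probability) whenever $m(I\neq\emptyset)>0$ because $v\notin N_{\omega,t}$ forces either $vb-K(vx)>0$ or $K(vx\neq 0)>0$, and derive a contradiction with NA$_1$.

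Where you differ is in the decomposition used to verify $H\cdot S\ge 0$. You truncate more aggressively — to $\{|v|\le 1,\ vb\le k,\ K(vx)\le k,\ K(|x|^2\wedge|x|)\le k\}$ — so that $H\in L(S)$ is immediate from the Shiryaev--Cherny criterion and $K(|Hx|)$ is bounded; one may then absorb the small-jump compensator into the drift and write $H\cdot S = (Hx)*\mu + (Hb - K(Hx))\cdot A$ as an ordinary sum of two manifestly nondecreasing, nonnegative processes. The paper instead truncates only on $\{|v|\le 1,\ K(|x|^2\wedge|x|)\le k\}$ and keeps a compensated integral $HxI_{\{Hx\le n^{-1}\}}*(\mu-\nu)$ in play, showing by Doob's maximal inequality and dominated convergence that it vanishes uniformly on $[0,T]$ as $n\to\infty$, while the residual terms $HxI_{\{Hx>n^{-1}\}}*\mu$ and $(Hb-K(HxI_{\{Hx>n^{-1}\}}))\cdot A$ are each nonnegative. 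Both routes reach the same conclusion; your heavier truncation is, if anything, a little cleaner because it renders the $S$-integrability of the selector explicit (the paper passes over this silently), at the modest cost of exhausting $\{I\neq\emptyset\}$ by a larger family of truncation parameters. One small slip: since $H\cdot S\ge 0$ with $P(H\cdot S_T>0)>0$ already gives $\bar x(\xi)=0$ for the non-trivial claim $\xi=H\cdot S_T$, the final rescaling-by-$n$ step is unnecessary (though it is also what the paper does via $P$-boundedness of $\cX^1_T$).
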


\begin{proof}
Let  $H$ be a $\bar \cP\otimes \cB(\R^d)$-measurable selector of the set-valued mapping 
$$\Omega \times [0, T] \ni (\omega,t)\mapsto I_{\omega,t}^k:=I_{\omega,t}\cap \{v:\ |v|\le 1,\; K_{\omega,t}(|x|^2\wedge |x|)\le k\}. 
$$
extended by the value $0\in \R^d$ on the set $\{ I^k = \emptyset \}$. 
We claim that $H\cdot S\ge 0$. Indeed, 
 $$
 EHxI_{\{Hx<0\}}*\mu_T=EHxI_{\{Hx<0\}}*\nu_T=EK(HxI_{\{Hx<0\}})\cdot A_T=0
 $$
 and, therefore, for any $n\ge 1$
$$
H\cdot S=HxI_{\{Hx>n^{-1}\}}*\mu + HxI_{\{Hx\le n^{-1}\}}*(\mu-\nu)+
(Hb-K(HxI_{\{Hx>n^{-1}\}}))\cdot A. 
$$
Note that  the first and the third terms in the right-hand are non-negative.  By Doob's inequality, 
\bean
E\sup_{t\le T} \big(HxI_{\{Hx\le n^{-1},\, |x|\le 1\}}*(\mu-\nu)_t\big)^2 
& \le& 4E\big(HxI_{\{Hx\le n^{-1},\, |x|\le 1\}}*(\mu-\nu)_T\big)^2\\
&\le& 4E|Hx|^2I_{\{Hx\le n^{-1},\, |x|\le 1\}}*\nu_T\to 0\\
 &\le& 4EK(|x|^2I_{\{Hx\le n^{-1},\, |x|\le 1\}})\cdot A_T\to 0,
 \eean
as $n\to \infty$ by the dominated convergence theorem. Furthermore,
\bean
E\sup_{t\le T} |HxI_{\{Hx\le n^{-1},\, |x|> 1\}}*(\mu-\nu)_t|
& \le& E|HxI_{\{Hx\le n^{-1},\, |x|> 1\}}*(\mu+\nu)_T\\
& =&2 E|HxI_{\{Hx\le n^{-1},\, |x|> 1\}}*\nu_T\\
& =&2 EK(|xI_{\{Hx\le n^{-1},\, |x|> 1\}})\cdot A_T\to 0. 
\eean
It follows that  $H\cdot S\ge 0$. If $m(I^k \neq \emptyset)>0$, then 
$$
EHx*\mu_T=EHx*\nu_T=EK(HxI_{\{Hx>0\}})\cdot A_T\ge 0
$$ 
and  
$$
E(Hb-K(Hx))\cdot A_T>0 
$$
with at least one of the previous two inequalities being strict. It follows that $H\cdot S\ge 0$, and $P (H\cdot S_T>0) > 0$, in contradiction with condition NA$_1$. Therefore, $m(I^k \neq \emptyset)=0$ holds for every $k\ge 1$, implying that $m(I \neq \emptyset)=0$.
\end{proof}

Note that in the statement of Lemma \ref{no_imm_arb}, a property weaker than NA$_1$ is actually sufficient, which is \emph{absence of immediate arbitrage}: whenever $X = H \cdot S \in \cX^0$, then $X = 0$.

\subsection{Deterministic concave maximisation problem}
\label{concave_max}
Here, for a fixed ``generic" $(\omega,t)$ for which $I_{\omega,t}=\emptyset$, an optimisation problem is considered. The dependence on $(\omega,t)$ is not important, and we can (and will) treat it as a deterministic problem. Its solution gives us a ``candidate'' for the supermartingale num\'eraire. Recall that we are working under the assumption $K (|x| \wedge |x|^2) < \infty$.

Consider the convex set $D:=\{v\in \R^d:  vx> -1, \ K\hbox{-a.e.}\}$, which in general is neither open nor closed, as well as its closure $\bar D=\{v\in \R^d:  vx\ge  -1, \ K\hbox{-a.e.}\}$.
If $K=0$, then $D=\R^d$. Since $y- \log (1+y)\ge 0$ holds for $y\ge -1$, the concave function $\Psi : D \mapsto [-\infty, \infty )$ defined via
\beq
\label{Psi}
\Psi(v):=bv-\frac 12 |c^{1/2} v|^2 - K(vx-\log (1+vx)), \quad v \in D,  
\eeq
is well defined on $D$ though it may take the value $-\infty$. In accordance to \S \ref{supnumport}, we also consider the null-investment linear subspace
$$
N:=\{v\in \R^d:\ K(vx \neq 0) = 0,\  cv=0,\ vb=0 \}
$$
and the set
$$
I:=\{v\in \R^d: \ K(vx < 0) = 0,\ cv=0,\ vb\ge K(vx)\}\setminus N. 
$$
For any $u \in N$, note that $\Psi(v)=\Psi (v + u)=\Psi (v^\perp + u)$ where $v^\perp$, being the projection of $v$ on the subspace $N^\perp$, belongs to $D$. Therefore,   
$$
\Psi^0:=\sup_{v\in D}\Psi(v)=\sup_{v\in D\cap N^\perp}\Psi(v)\ge \Psi (0)=0.
$$ 

Let  $u,v\in D$ be such that $\Psi (u)>-\infty$ and $\Psi(v)>-\infty$. The concave function  
$$
\lambda\mapsto \Psi((1-\lambda )u+\lambda v)= \Psi(u+\lambda (v-u)), \qquad \lambda \in [0,1],
$$ 
is finite and has right derivatives on $[0,1)$. In particular, its right derivative at zero is the directional derivative
$\partial \Psi (u;v-u)$ of $\Psi$ at $u$ in the direction $v-u$.  If one is allowed to differentiate under the sign of the integral  in (\ref{Psi}), then  
$$
\partial  \Psi (u;v-u) = (v-u) (b-cu) -K\Big( (v-u)x \frac{ux}{1+ux}\Big). 
$$
There is a well-known sufficient condition for the validity of such an operation: the derivative of the integrand  with respect to the parameter in a right neighbourhood of zero should be bounded by an integrable  function.  Unfortunately, we cannot use this criterion, and the above identity may fail to be true. However, we claim that the directional derivative $\partial \Psi (u;v-u)$ always dominates the right-hand side of the above formula. Indeed, Jensen's inequality applied to the convex function $\phi(y):=y- \log (1+y)$ gives
$$
\frac {\phi ((1-\lambda )ux+\lambda vx)- \phi (ux)}{\lambda}\le \phi (vx) - \phi (ux)\le \phi (vx). 
$$
Since $\Psi(v)>-\infty$, the function $x\mapsto \phi(vx)$ is $K$-integrable and Fatou's lemma can be applied:   
$$
\limsup_{\lambda\downarrow 0} \frac {K(\phi (ux+\lambda (v-u)x)- \phi (ux))} {\lambda} \le K\Big((v-u)x\frac{ux}{1+ux}\Big). 
$$
Thus, 
\beq
\label{bound}
\partial  \Psi (u;v-u)\ge (v-u)(b-cu)-K\Big((v-u)x\frac{ux}{1+ux}\Big). 
\eeq

\begin{lemm} \label{J-rep}
$I= \{v\in \R^d: \ K[vx < 0 ] = 0,\  \partial \Psi(av;-av)\le 0, \ \forall a>0\}\setminus N$. 
\end{lemm}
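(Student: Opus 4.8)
The plan is to prove the set identity by double inclusion, exploiting the structure of the directional derivative bound \eqref{bound} together with a scaling (homogeneity-type) argument. Write $J$ for the set on the right-hand side, namely $J:=\{v\in \R^d: K[vx<0]=0,\ \partial\Psi(av;-av)\le 0\ \forall a>0\}\setminus N$. Observe first that for $v$ with $K[vx<0]=0$ and a parameter $a>0$, plugging $u=av$ and direction $-av$ into the right-hand side of \eqref{bound} gives $\partial\Psi(av;-av)\ge -av(b-c(av)) + K\big(avx\,\frac{avx}{1+avx}\big) = -a\,vb + a^2|c^{1/2}v|^2 + K\big(\frac{(avx)^2}{1+avx}\big)$. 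This lower bound is the key computational input.

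For the inclusion $I\subseteq J$: take $v\in I$, so $K[vx<0]=0$, $cv=0$, $vb\ge K(vx)$, and $v\notin N$. Since $cv=0$ and $K[vx<0]=0$, the scaled vector $av$ still lies in $\bar D$ (indeed in $D$ if $vx>-1$ $K$-a.e., which holds automatically when $K[vx<0]=0$ since then $vx\ge 0$), and we must check $\partial\Psi(av;-av)\le 0$ for all $a>0$. Here I would argue directly: along the segment from $av$ towards $0$, concavity of $\Psi$ gives $\partial\Psi(av;-av)\le \Psi(0)-\Psi(av) = -\Psi(av)$, and since $vx\ge 0$ $K$-a.e. and $cv=0$ we get $\Psi(av)=a\,vb - aK(vx) + K\big(a vx - \log(1+avx)\big) \ge a(vb-K(vx)) + K(\log\text{-correction})$; the point is that for $vx\ge 0$ the term $avx-\log(1+avx)\ge 0$, so $\Psi(av)\ge a(vb-K(vx))\ge 0$ using $v\in I$. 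Hence $\partial\Psi(av;-av)\le -\Psi(av)\le 0$, giving $v\in J$.

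For the reverse inclusion $J\subseteq I$: take $v\in J$, so $K[vx<0]=0$, $v\notin N$, and $\partial\Psi(av;-av)\le 0$ for all $a>0$. Using the lower bound from \eqref{bound} displayed above, $0\ge \partial\Psi(av;-av)\ge -a\,vb + a^2|c^{1/2}v|^2 + K\big(\frac{(avx)^2}{1+avx}\big)$ for every $a>0$. Dividing by $a$ and letting $a\downarrow 0$ kills the quadratic term $a|c^{1/2}v|^2\to 0$ but the $K$-integral term, after dividing by $a$, behaves like $a\,K\big(\frac{(vx)^2}{\,\cdot\,}\big)$ which also tends to $0$ provided we can justify the limit; so we obtain $vb\ge 0$. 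To extract $cv=0$ and the sharper inequality $vb\ge K(vx)$, divide instead by $a^2$ and let $a\to\infty$: this forces $|c^{1/2}v|^2\le 0$, hence $c^{1/2}v=0$ and so $cv=0$; then with $cv=0$ the bound reads $0\ge -a\,vb + K\big(\frac{(avx)^2}{1+avx}\big)$, divide by $a$ and let $a\to\infty$ using monotone convergence $\frac{1}{a}\cdot\frac{(avx)^2}{1+avx} = \frac{a(vx)^2}{1+avx}\uparrow (vx)\cdot\mathbf 1_{\{vx>0\}} $ — more precisely the integrand increases to $vx$ on $\{vx\ge 0\}$ — to conclude $vb\ge K(vx)$. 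Together with $K[vx<0]=0$ and $v\notin N$ this gives $v\in I$.

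The main obstacle I anticipate is the careful justification of the limit interchanges in the $J\subseteq I$ direction, specifically passing the limit in $a$ inside the $K$-integral $K\big(\frac{(avx)^2}{1+avx}\big)$. For the $a\to\infty$ step one wants a monotone convergence argument: on $\{vx\ge 0\}$ the map $a\mapsto \frac{1}{a}\cdot\frac{(avx)^2}{1+avx}=\frac{a(vx)^2}{1+avx}$ is nondecreasing in $a$ with pointwise limit $vx$, so Beppo Levi applies and yields $vb\ge K(vx)$ even allowing $K(vx)=+\infty$ — but then $vb=+\infty$ is impossible, so in fact $K(vx)<\infty$, consistent with assumption \eqref{ass:sigma-special}. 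For the $a\downarrow 0$ step one similarly needs that $\frac{1}{a}K\big(\frac{(avx)^2}{1+avx}\big)=K\big(\frac{a(vx)^2}{1+avx}\big)\to 0$, which again follows by monotone (or dominated, using $K(|x|^2\wedge|x|)<\infty$ on the relevant region since $vx\ge 0$) convergence as the integrand decreases to $0$. I would also need to double-check the edge case where $vx$ is $K$-essentially unbounded so that $av\notin D$ for large $a$ — but since $K[vx<0]=0$ we have $vx\ge 0$ $K$-a.e., so $avx\ge 0>-1$ always and $av\in D$ for all $a>0$, so this edge case does not arise. The remaining verifications ($v\notin N$ is preserved, measurability is not at issue here since we are at a fixed generic $(\omega,t)$) are routine.
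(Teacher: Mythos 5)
Your argument for $J \subseteq I$ is essentially sound and in line with the paper's reasoning: you use the lower bound \eqref{bound} at $u = av$ in the direction $-av$, send $a \to \infty$ after dividing by $a^2$ to kill $cv$, then by monotone convergence extract $vb \ge K(vx)$. The imprecision in writing out $\Psi(av)$ aside, that direction works.

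The $I \subseteq J$ direction, however, has a genuine sign error that undermines it. You invoke ``concavity of $\Psi$ gives $\partial\Psi(av;-av) \le \Psi(0)-\Psi(av)$.'' For a concave function $g$ on $[0,1]$ with $g(\lambda) := \Psi(av + \lambda(-av))$, the right derivative at $0$ satisfies $g'(0+) = \sup_{\lambda \in (0,1]} \frac{g(\lambda)-g(0)}{\lambda} \ge g(1) - g(0)$; the gradient inequality for a concave function gives a \emph{lower} bound by the chord slope, not an upper bound. So the correct inequality is $\partial\Psi(av;-av) \ge \Psi(0)-\Psi(av)$, which combined with $\Psi(av) \ge 0$ tells you only that $\partial\Psi(av;-av) \ge$ (something $\le 0$), i.e.\ nothing useful. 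For the same reason, the lower bound \eqref{bound} alone — which is what your ``key computational input'' provides — cannot give an upper bound on $\partial\Psi(av;-av)$, so it cannot establish $I \subseteq J$ either.

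What the paper actually does here is to observe that, on the set $\{v : vx \ge 0 \ K\text{-a.e.}\}$, one may differentiate \emph{under the integral sign} (the integrands are dominated, after fixing $a$, by a constant multiple of $|x| \wedge |x|^2$, which is $K$-integrable by \eqref{ass:sigma-special}), so \eqref{bound} holds with \emph{equality}:
\[
- \partial\Psi(av; -av) = a\Big(vb - a|c^{1/2}v|^2 - aK\Big(\tfrac{(vx)^2}{1+avx}\Big)\Big), \qquad a > 0.
\]
Once you have this identity, $I \subseteq J$ becomes a one-line computation: for $v \in I$, $cv = 0$ kills the middle term, and $aK\big(\tfrac{(vx)^2}{1+avx}\big) = K\big(\tfrac{a(vx)^2}{1+avx}\big) \le K(vx) \le vb$, so the right-hand side is nonnegative and $\partial\Psi(av;-av) \le 0$. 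This step — upgrading \eqref{bound} to an equality on the relevant region — is the missing idea in your proposal. (The equality also replaces and slightly streamlines your limiting arguments for $J \subseteq I$, though, as noted, that direction was already salvageable from the lower bound.)
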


\begin{proof}
If  $vx\ge 0$ holds $K$-a.e., and since $K (|x| \wedge |x|^2) < \infty$, one can differentiate under the sign of the integral at point $a v$ in the direction $-a v$ for any $a > 0$ and obtain
$$
- \partial  \Psi(av;-av) = a \Big(v b - a |c^{1/2}v|^2 - a K \Big(\frac{|vx|^2}{1+avx}\Big)\Big), \quad \forall a > 0. 
$$
The inclusion $I \subseteq \{v\in \R^d: \ K[vx < 0 ] = 0,\  \partial \Psi(av;-av)\le 0, \ \forall a>0\}\setminus N$ is straightforward. On the other hand, the expression on the right-hand side of the above identity is greater or equal to zero for arbitrary large $a$ only if $cv=0$ and $v b \ge K(vx)$.
\end{proof}

\begin{prop} 
\label{pointwisemax}
Suppose that $I=\emptyset$. Then there is unique $v^0\in D\cap N^\perp$ such that 
%\beq
%\label{attained}
\[
\Psi(v^0)=\sup_{v\in D}\Psi(v)<\infty.  
\]
%\eeq
For any point $v^0\in D$ at which the supremum above is attained
\beq
\label{inequality}
F(v,v^0):=(v-v^0)(b-cv^0)-K\Big((v-v^0)x\frac{v^0x}{1+v^0x}\Big)\le 0, \quad \forall\,  v\in D.
\eeq  
\end{prop}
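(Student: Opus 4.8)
The plan is to realise $v^0$ as a point where the concave function $\Psi$ attains its supremum over the closed convex set $\bar D\cap N^\perp$, to produce this maximiser by a compactness argument (this is the one place where $I=\emptyset$ is used), and then to read off \eqref{inequality} from the first‑order optimality condition together with the directional‑derivative bound \eqref{bound}.

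\textbf{Existence and finiteness.} First I would note that $v\mapsto K\big(vx-\log(1+vx)\big)$ is lower semicontinuous on $\bar D$ (by Fatou's lemma: $\phi(y):=y-\log(1+y)$ is nonnegative and continuous on $(-1,\infty)$, with value $+\infty$ at $-1$), so $\Psi$ extends to an upper semicontinuous, concave, $[-\infty,\infty)$‑valued function on $\bar D$ that equals $-\infty$ on $\bar D\setminus D$. Since $\Psi(0)=0$ and, as already observed, $\Psi^0:=\sup_{v\in D}\Psi=\sup_{\bar D\cap N^\perp}\Psi\ge0$, it suffices to show that the superlevel set $L:=\{v\in\bar D\cap N^\perp:\Psi(v)\ge0\}$ is bounded; for then $L$ is compact, $\Psi$ attains on $L$ a finite maximum $M$, necessarily $M=\Psi^0$ (values of $\Psi$ off $L$ being $<0\le M$), and the maximiser lies in $D\cap N^\perp$ because $\Psi=-\infty$ off $D$.

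\textbf{Boundedness of $L$: the main obstacle.} Arguing by contradiction, I would pick $v_n\in L$ with $|v_n|\to\infty$ and, passing to a subsequence, set $w_n:=v_n/|v_n|\to w$ with $|w|=1$, $w\in N^\perp$ (so $w\notin N$), and show $w\in I$, contradicting $I=\emptyset$. From $v_n\in\bar D$ we get $v_nx\ge-1$ $K$‑a.e., hence $w_nx\ge-1/|v_n|$, so letting $n\to\infty$ gives $K(wx<0)=0$. From $\Psi(v_n)\ge0$ and $K(\phi(v_nx))\ge0$ we get $\frac12|c^{1/2}v_n|^2\le bv_n\le|b|\,|v_n|$; dividing by $|v_n|^2$ yields $cw=0$. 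The delicate step is $bw\ge K(wx)$: from $\Psi(v_n)\ge0$ we also have $K(\phi(v_nx))\le bv_n$, i.e.\ $K\big(\phi(v_nx)/|v_n|\big)\le bw_n\to bw$; a pointwise estimate gives $\liminf_n\phi(v_nx)/|v_n|\ge wx$ for $K$‑a.e.\ $x$ (if $wx>0$ then $v_nx\to+\infty$ and $\phi(v_nx)/|v_n|=w_nx-\log(1+v_nx)/|v_n|\to wx$ since $\log(1+v_nx)/|v_n|\to0$; if $wx=0$ the left side is $\ge0=wx$), so Fatou's lemma gives $K(wx)\le\liminf_n K\big(\phi(v_nx)/|v_n|\big)\le bw$. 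Thus $w$ satisfies $K(wx<0)=0$, $cw=0$, $bw\ge K(wx)$ and $w\notin N$, i.e.\ $w\in I$ — contradiction. Hence $L$ is bounded and a maximiser $v^0\in D\cap N^\perp$ of $\Psi$ exists with $\Psi(v^0)=\Psi^0<\infty$.

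\textbf{Uniqueness and the inequality \eqref{inequality}.} If $v^0,v^1\in D\cap N^\perp$ both attain $\Psi^0<\infty$, concavity forces $\Psi\equiv\Psi^0$ on $[v^0,v^1]$; writing $u:=v^1-v^0$, the two convex $\lambda$‑pieces $\frac12|c^{1/2}(v^0+\lambda u)|^2$ and $K\big(\phi((v^0+\lambda u)x)\big)$ must then both be affine in $\lambda$, which forces $|c^{1/2}u|^2=0$ and, by strict convexity of $\phi$, $ux=0$ $K$‑a.e., and then $bu=0$; so $u\in N\cap N^\perp=\{0\}$. For \eqref{inequality}, let $v^0\in D$ be a maximiser and $v\in D$. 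The concave function $\lambda\mapsto\Psi\big((1-\lambda)v^0+\lambda v\big)$ on $[0,1]$ is maximised at $\lambda=0$, so its right derivative there, $\partial\Psi(v^0;v-v^0)$, is $\le0$; when $\Psi(v)>-\infty$, the bound \eqref{bound} with $u=v^0$ gives at once $F(v,v^0)\le\partial\Psi(v^0;v-v^0)\le0$. Taking here $v=0$ (legitimate, $\Psi(0)=0$) yields $K\big((v^0x)^2/(1+v^0x)\big)\le bv^0<\infty$, i.e.\ \eqref{eq:integr_techn} holds at $v^0$; as in the proof of Lemma~\ref{superm}, this makes $F(v,v^0)$ well defined in $[-\infty,\infty)$ for every $v\in D$. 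It then remains to treat $v$ with $\Psi(v)=-\infty$: I would check that $v_\lambda:=(1-\lambda)v^0+\lambda v\in D$ has $\Psi(v_\lambda)>-\infty$ for small $\lambda>0$ (near $x=0$ using $\phi(v_\lambda x)\le C|x|^2$ and \eqref{ass:sigma-special}; away from $x=0$, where $K$ is a finite measure, using $K(\phi(v^0x))<\infty$ and elementary monotonicity/convexity estimates on $\phi$), apply the case already handled to $v_\lambda$, and use that $w\mapsto F(w,v^0)$ is affine with $F(v^0,v^0)=0$, so that $\lambda F(v,v^0)=F(v_\lambda,v^0)\le0$, whence $F(v,v^0)\le0$.

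The crux, and the step I expect to be hardest, is the boundedness of $L$ — specifically the passage $bw\ge K(wx)$, where the crude estimate $K(\phi(v_nx))\le bv_n$ must be combined with a pointwise liminf bound and Fatou's lemma (exploiting $\phi(y)\sim y$ as $y\to\infty$) to recover the defining drift/jump inequality of $I$.
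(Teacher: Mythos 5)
Your proof is correct, and the central step — excluding unboundedness — is done by a genuinely different and arguably more elementary route than the paper's. The paper takes a maximizing sequence $(v_n)$, performs a ``without loss of generality'' normalisation so that $\lambda\mapsto\Psi(\lambda v_n)$ is maximized at $\lambda=1$, then uses the directional‑derivative bound \eqref{bound} at the points $a\bar v_n$ together with dominated convergence and Lemma~\ref{J-rep} (the characterisation of $I$ via $\partial\Psi(av;-av)$) to show the normalised limit $\bar v$ lies in $I$. You instead work with the superlevel set $L=\{v\in\bar D\cap N^\perp:\Psi(v)\ge0\}$, and from the raw value inequality $\Psi(v_n)\ge 0$ extract $cw=0$ (dividing $\tfrac12|c^{1/2}v_n|^2\le bv_n$ by $|v_n|^2$) and $bw\ge K(wx)$ (the pointwise bound $\liminf_n\phi(v_nx)/|v_n|\ge wx$ plus Fatou, exploiting $\phi(y)\sim y$ as $y\to\infty$). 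This dispenses with the $\lambda$‑normalisation trick, with Lemma~\ref{J-rep}, and with the dominated‑convergence passage, at the cost of the pointwise $\liminf$ estimate — a fair trade, and your version is self‑contained in a way the paper's argument is not.

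You also correctly flag and repair a small gap the paper leaves implicit: the paper deduces \eqref{inequality} from \eqref{bound}, but \eqref{bound} is established only for $v\in D$ with $\Psi(v)>-\infty$. Your reduction — observe \eqref{eq:integr_techn} holds at $v^0$ by testing with $v=0$, so that $F(\cdot,v^0)$ is affine and $[-\infty,\infty)$‑valued on $D$; verify $\Psi(v_\lambda)>-\infty$ for $\lambda\in(0,1)$ using $1+v_\lambda x\ge(1-\lambda)(1+v^0x)$ away from the origin and \eqref{ass:sigma-special} near it; then use $\lambda F(v,v^0)=F(v_\lambda,v^0)\le0$ — is clean and closes the case $\Psi(v)=-\infty$. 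The uniqueness argument matches the paper's appeal to strict concavity on $D\cap N^\perp$, spelled out via the three constituent pieces of $\Psi$ and $N\cap N^\perp=\{0\}$.
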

\begin{proof}
When $K=0$, the claim is obvious. Let $v_n\in D\cap N^\perp$ form a sequence such that $\Psi(v_n)\to \Psi^0$. Without loss of generality, we assume that the function $[0,1] \ni \lambda\mapsto \Psi(\lambda  v_n)$ attains its maximum at $\lambda=1$ (otherwise, replace $v_n$ by $v'_n=\lambda _n v_n$ where $\lambda_n \in [0,1]$ is the point where $[0,1] \ni \lambda\mapsto \Psi(\lambda  v_n)$ attains its maximum). If $(v_n)_n$ contains a  bounded subsequence, we shall work with the latter one, assuming that  it converges to some point $v^0\in  \bar D\cap N^\perp$. In this case, by Fatou's lemma,
\bean
\liminf _n K(v_nx-\log (1+v_nx))&\ge& K(\liminf _n(v_nx-\log (1+v_n x)))\\
&=&K(v^0x-\log (1+v^0x));
\eean
therefore, $\lim_n \Psi(v_n)\le \Psi (v^0)$, i.e., the supremum is attained at $v^0$. Since  $\Psi^0\ge 0$, $v^0 \in D\cap N^\perp$.

It remains to establish that the sequence  $|v_n|$ cannot diverge to infinity. Indeed, if that was the case, we may assume  that the normalised sequence $\bar v_n:=v_n/|v_n|$ converges to some $\bar v \in N^\bot$ with $|\bar v|=1$. Since the inequality  $v_nx\ge -1$ implies that  $\bar v_nx\ge -1/|v_n|$,  it follows that $\bar vx\ge 0$ $K$-a.e. For $a > 0$, let $n_a \geq 1$ be large enough so that $|v_n|\ge 2 a$ holds for all $n \geq n_a$. Since $[0,1] \ni \lambda\mapsto \Psi(\lambda  v_n)$ attains its maximum at $\lambda=1$, for all $a>0$ we have that  $\partial \Psi (a\bar v_n;-a\bar v_n)\le 0$ when $n\ge n_a$.  Hence, by (\ref{bound}) we obtain 
$$
b\bar v_n - a |c^{1/2}\bar v_n|^2 - a K\Big( \frac{|\bar v_n x|^2}{1+a\bar v_nx}\Big)\ge 0, \quad \forall n \geq n_a.
$$
Since $1+a\bar v_nx \geq 1/2$ holds $K$-a.e. for all $n \geq n_a$, the fact that $K (|x| \wedge |x|^2) < \infty$ allows use of the dominated convergence theorem
to imply that
$$
b\bar v - a |c^{1/2}\bar v|^2 - a K \Big(\frac{|\bar v x|^2}{1+a\bar vx}\Big)\ge 0, \quad \forall a > 0,
$$
and we conclude by Lemma \ref{J-rep} that  $\bar v\in I$, contradicting the assumption $I=\emptyset$. 

At any point $v^0\in D$ where $\Psi $ attains its maximum, the inequality $\partial \Psi(v^0,v-v^0)\le 0$ for every $v\in D$ is valid; therefore,  (\ref{inequality}) holds in virtue of (\ref{bound}). 

Uniqueness of $v^0$ on $D\cap N^\perp$ follows because $\Psi$ is strictly concave on the latter set.
\end{proof}

\begin{rem}
\label{Caratheodory}
For $a\in [0,1 )$, let $a\bar D=\{v\in \R^d:\ vx \ge -a\  K\hbox{-a.e.}\}$. In view of the bound  
$$
y-\log (1+y)\le c_a (|y|^2\wedge |y|), \quad y\ge a, 
$$
for a constant $c_a>0$, the dominated convergence theorem implies that $\Psi$ is continuous on $a\bar D$. 
\end{rem}

\subsection{The case of discrete L\'evy measures}
\label{discrete}

Suppose that there exist $x_i\in {\R}^d\setminus \{0\}$ for $i=1,...,N$ such that $K(\{ x_i \}) > 0$ and $K({\R}^d\setminus \{ x_1, \ldots, x_N\}) = 0$. In this case,
$$
D := \{v\in \R^d:\  vx> -1 \ K\hbox{-a.e.}\}=\{v\in {\R}^d:\ vx_i>-1,\ i=1,...,N\}.$$
In particular, $D$ is an open set, which means that $v^0$ that maximises $\Psi$ on an interior point. In the notation of (\ref{inequality}), it follows that $F(v, v^0) = 0$ has to hold for all $v \in D$.

\subsection{Measurability}
\label{measur}
The function $(\omega,t,v)\mapsto K_{\omega,t}(vx\le -1)$ is $\bar \cP\otimes\cB(\R^d)$-measurable and so is the  graph ${\rm Gr}\, D$ 
of the set-valued mapping
$$
(\omega,t)\mapsto D_{\omega,t}=\{v\in \R^d:\ K_{\omega,t}(vx\le -1)=0\}. 
$$ 
Define the function $\Psi : \Omega \times \R_+ \times \R^d \mapsto [- \infty, \infty)$ via
$$
\Psi_{\omega,t}(v):=b_t(\omega)v-\frac 12 |c^{1/2}_t(\omega) v|^2 - K_{\omega,t} (vx-\log (1+vx)).
$$ 

\begin{prop} \label{prop:meas_sel} 
Suppose that $K(|x|^2\wedge |x|)<\infty$, $m$-a.e. Then, the $[0, \infty]$-valued function  
$$
(\omega,t)\mapsto \psi(\omega,t):=\sup_{v\in D_{\omega,t}}\Psi_{\omega,t}(v)
$$
is $\bar \cP$-measurable. If, furthermore, $m( I \neq \emptyset )=0$, then $m (\psi =\infty) = 0$ and there exists a $\cP$-measurable function $g$ such that 
$$
\psi(\omega,t)=\Psi_{\omega,t}(g(\omega,t)),    \quad m\hbox{-a.e.}
$$
\end{prop}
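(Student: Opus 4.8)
The plan is to reduce the statement to a routine application of the measurable projection and measurable selection theorems, the only genuinely analytic input---existence and uniqueness of the pathwise maximiser---being already supplied by Proposition \ref{pointwisemax}. First I would put everything on a common footing by extending $\phi(y):=y-\log(1+y)$, which is nonnegative and continuous on $(-1,\infty)$ with $\phi(y)\to+\infty$ as $y\downarrow-1$, to a Borel function $\phi:\R\to[0,\infty]$ by setting $\phi(y):=+\infty$ for $y\le-1$. With this convention, $\Psi_{\omega,t}(v)=b_t(\omega)v-\frac12|c^{1/2}_t(\omega)v|^2-K_{\omega,t}(\phi(vx))$ is well defined for \emph{every} $v\in\R^d$, takes values in $[-\infty,\infty)$, and agrees with the original definition on ${\rm Gr}\,D$; moreover it equals $-\infty$ whenever $v\notin D_{\omega,t}$, because then $K_{\omega,t}(vx\le-1)>0$ forces $K_{\omega,t}(\phi(vx))=\infty$. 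Since in addition $\Psi_{\omega,t}(0)=0$, we get $\psi(\omega,t)=\sup_{v\in\R^d}\Psi_{\omega,t}(v)=\sup_{v\in D_{\omega,t}}\Psi_{\omega,t}(v)\in[0,\infty]$.

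Next I would check joint measurability of $(\omega,t,v)\mapsto\Psi_{\omega,t}(v)$. The affine and quadratic terms are manifestly $\bar\cP\otimes\cB(\R^d)$-measurable; for the integral term, $(v,x)\mapsto\phi(vx)$ is nonnegative and Borel, and since $K$ is a transition kernel that is $\bar\cP$-measurable in $(\omega,t)$, the standard monotone-class argument shows that $(\omega,t,v)\mapsto K_{\omega,t}(\phi(vx))=\int\phi(vx)\,K_{\omega,t}(dx)$ is $\bar\cP\otimes\cB(\R^d)$-measurable, hence so is $\Psi$. Then, for each $c\in\R$, the set $\{(\omega,t,v):\Psi_{\omega,t}(v)>c\}$ lies in $\bar\cP\otimes\cB(\R^d)$, and its projection onto $\Omega\times[0,T]$ is precisely $\{\psi>c\}$; since $\bar\cP$ is complete with respect to the finite measure $m$, the measurable projection theorem gives $\{\psi>c\}\in\bar\cP$. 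Thus $\psi$ is $\bar\cP$-measurable.

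Now assume $m(I\neq\emptyset)=0$. For $(\omega,t)$ in the $\bar\cP$-measurable, $m$-conull set $\{I=\emptyset\}$, Proposition \ref{pointwisemax} read pathwise yields $\psi(\omega,t)=\sup_{v\in D_{\omega,t}}\Psi_{\omega,t}(v)<\infty$, attained at a point (unique in $D_{\omega,t}\cap N_{\omega,t}^\perp$); in particular $m(\psi=\infty)=0$. I would then set $\Gamma:=\{(\omega,t,v):(\omega,t)\in\{I=\emptyset\},\ \Psi_{\omega,t}(v)=\psi(\omega,t)\}$, which lies in $\bar\cP\otimes\cB(\R^d)$ because $\psi$ is $\bar\cP$-measurable and $\Psi$ is jointly measurable, and which has nonempty sections over $\{I=\emptyset\}$ (if a single-valued map is desired one intersects $\Gamma$ with the measurable-graph set $\{v\in N_{\omega,t}^\perp\}$, which makes the sections singletons). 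By a standard measurable selection theorem (Aumann's, or Jankov--von Neumann) there is a $\bar\cP$-measurable $\tilde g$ with $(\omega,t,\tilde g(\omega,t))\in\Gamma$ on $\{I=\emptyset\}$; put $\tilde g:=0$ on $\{I\neq\emptyset\}$. Since $\Psi_{\omega,t}(\tilde g(\omega,t))=\psi(\omega,t)\ge0>-\infty$ forces $\tilde g(\omega,t)\in D_{\omega,t}$, i.e.\ $\tilde g x>-1$ $K$-a.e. Finally, as $\bar\cP$ is the $m$-completion of $\cP$, I would choose a $\cP$-measurable $g$ equal to $\tilde g$ $m$-a.e.; this $g$ satisfies all the requirements.

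I expect the main obstacle to be the measurability bookkeeping of the first two paragraphs---joint measurability of the L\'evy-integral term $K_{\omega,t}(\phi(vx))$ and the correct handling of the value $-\infty$ on the neither open nor closed domain $D$---after which the projection and selection theorems close the argument essentially mechanically, given that Proposition \ref{pointwisemax} already furnishes the pathwise maximiser.
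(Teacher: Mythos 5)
Your proof is correct, but it takes a genuinely different measurability route from the paper's. The paper establishes $\bar\cP$-measurability of $\psi$ by approximating $D$ from inside by the closed, convex-valued set-valued maps $(\omega,t)\mapsto a\bar D_{\omega,t}$ for $a\in[0,1)$, using Remark~\ref{Caratheodory} (the bound $y-\log(1+y)\le c_a(|y|^2\wedge|y|)$ plus dominated convergence) to show that $\Psi$ is a Carath\'eodory function on $a\bar D$, hence $\psi_a=\sup_{a\bar D}\Psi$ is measurable, and then passing to the limit $a\uparrow 1$ via monotone convergence; the selector is extracted with a set-valued argmax theorem in the style of Molchanov. You instead extend $\phi$ to $[-\infty,\infty]$ by $\phi(y)=+\infty$ for $y\le -1$, so that $\Psi$ becomes $\bar\cP\otimes\cB(\R^d)$-jointly measurable on all of $\R^d$ (with $\Psi=-\infty$ off $D$), and then you obtain measurability of $\psi$ from the measurable projection theorem and the selector from Jankov--von Neumann/Aumann. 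Your route avoids the Carath\'eodory continuity bookkeeping entirely (only Borel measurability of $\phi$ is needed, not continuity of $v\mapsto K(\phi(vx))$), at the cost of invoking the heavier analytic-set machinery behind projection and selection, which in turn relies crucially on the $m$-completeness of $\bar\cP$ — a hypothesis both arguments exploit, but in different places. Both are sound; neither is shorter once the relevant black boxes are accepted. One small caveat: your optional remark about intersecting $\Gamma$ with $\{v\in N_{\omega,t}^\perp\}$ to force singleton sections requires a measurable-graph argument for $(\omega,t)\mapsto N_{\omega,t}^\perp$ (which holds but is not immediate); it is not needed for the statement and you correctly flagged it as optional.
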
 

\begin{proof}
For $a\in [0,1]$, consider the $\bar \cP$-measurable mapping 
$$
(\omega,t)\mapsto a\bar D_{\omega,t}=\{v\in \R^d:\ vx \ge -a\  K_{\omega,t}\hbox{-a.e.}\}
$$ 
with closed (and convex) values; obviously,   $a\bar D_{\omega,t}\subseteq D_{\omega,t}$. For $a\in [0,1 )$, Remark \ref{Caratheodory} implies that $\Psi$ is a Carath\'eodory (measurable in $(\omega,t)$ and continuous in $v$) function on $a\bar D$. Thus, the function 
$$
(\omega,t)\mapsto \psi_a(\omega,t):=\sup_{v\in aD_{\omega,t}}\Psi_{\omega,t}(v)
$$
is $\bar \cP$-measurable---see \cite[Th. 2.27]{Molchanov}, as well as \cite[Chapter 18]{AB} and \cite[ Section 9]{Wagner} for related results.  Let $a_n := 1 - 1/n$. 
The sequence  $\psi_{a_n}$ is increasing to a $\bar \cP$-measurable function which we shall denote $\psi_{1-}$. Fix $(\omega,t)$ and take arbitrary point $v_0\in D_{\omega,t}$.  
%Suppose that the supremum of $\Psi_{\omega,t}$ on $D_{\omega,t}$ is finite and attained on some $v_0=v_0(\omega,t)$. In particular, 
%$$
%K_{\omega,t}(I_{\{v_0x\le 0\}}(v_0x-\log (1+v_0x))<\infty. 
%$$
The function $a\mapsto ay-\log (1+a y) \in [0, \infty]$ is nondecreasing on $[0,1 )$ for all $y \in \R$; therefore, by the monotone convergence theorem,  
$$
\lim_n K_{\omega,t}(a_nv_0x-\log(1+a_nv_0x))= K_{\omega,t}(v_0x-\log(1+v_0x)). 
$$ 
It follows that   $\psi_{1-} = \psi$; therefore, $\psi$ is $\bar \cP$-measurable. On the $\cP$-measurable set $\{ I = \emptyset \}$, Proposition \ref{pointwisemax} implies that $\psi < \infty$, and that the \emph{non-empty} set-valued  mapping
$$
(\omega,t)\mapsto \{v\in \R^d:\ \psi(\omega,t)=\Psi_{\omega,t}(v)\},
$$
which is $\bar \cP$-measurable, admits  a $\bar \cP$-measurable selector $\bar g$. Therefore, if $m(I \neq \emptyset) = 0$. it suffices to take as $g$ any $\cP$-measurable function $m$-a.e. coinciding with $\bar g$.
\end{proof}

\subsection{Integrability}
\label{Integrability}

Suppose that $K(|x|^2\wedge |x|)<\infty$ and $I = \emptyset$ hold $m$-a.e., and let $g$ be the $\cP$-measurable function of Proposition \ref{prop:meas_sel}. Inequality (\ref{inequality}) applied with $v=0$ and $v^0=g$ gives 
\beq
\label{ineq}
F(0, g) = - g(b-cg) + K\Big(\frac{(gx)^2}{1+gx}\Big) \le 0, \quad m\hbox{-a.e.}  
\eeq
Since $F(0, 0) = 0$ it follows that $F(0, g^n) \leq 0$ holds, where $g^n:=gI_{\{|g|\le n\}}$ for all $n \geq 1$. In virtue of Lemma \ref{superm}, the processes $1 / \cE (g^n\cdot S)$ are supermartingales  for all $n \geq 1$. Recall that condition NA$_1$ is equivalent to the $P$-boundedness of the set $\cX^1_T$ (see Lemma \ref{A4}). Thus, NA$_1$
ensures that the sequence of random variables $(\cE(g^n\cdot S)_T)_n$  is $P$-bounded. In view of Proposition \ref{expbounded},  the sequence  $(g^n\cdot S_T)_n$ is also $P$-bounded.  In Proposition \ref{global}, we shall show that this implies that $g\in L(S)$. We first recall the criterion of the integrability of a vector-valued process with respect to a vector-valued semimartingale---see  \cite{ShCh} and \cite{JS:87}.
 
\begin{prop} A predictable process $g \in L(S)$ if and only if
$$
\big ( |c^{1/2}g|^2+K(|gx|^2\wedge 1)+\big |gb^h-K\big(gxI_{\{|x|\le 1,\;|gx|>1\}}\big)\big |\big)\cdot A_T<\infty.
$$
In the particular case where $g$ is such that $K(gx\le -1)=0$ $m$-a.e. and $K\big(|x|I_{\{|x|>1\}})\big)<\infty$, and recalling that $b =b^h + K\big(xI_{\{|x|>1\}}\big)$, the last relation can be written as
$$
\big ( |c^{1/2}g|^2+K(|gx|^2\wedge 1)+\big |gb-K\big(gx\big (1-I_{\{|x|\le 1,\;|gx|\le 1\}}\big)\big)\big |\big)\cdot A_T<\infty.
$$
\end{prop}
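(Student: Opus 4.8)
The first equivalence in the statement is the classical criterion for general (vector) stochastic integration, and the plan is not to reprove it but simply to invoke it: it appears, in essentially this form, in \cite{ShCh} and \cite{JS:87}. The only adaptation needed is cosmetic, namely passing from the usual phrasing in terms of $(B^h, C, \nu)$ to one in terms of the local characteristics $(b^h, c, K)$ relative to the operational clock $A$; this is immediate from $B^h = b^h \cdot A$, $\langle S^c\rangle = c \cdot A$ and $\nu(dt, dx) = dA_t\, K_t(dx)$. Granting this, the second displayed condition is merely a rewriting of the first, so it automatically inherits the equivalence with $g \in L(S)$; what remains to be checked is the algebraic identity between the two ``drift'' terms.

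I would carry out that identity pointwise in $(\omega, t)$, after first recording what the two extra hypotheses provide. Since $K(gx \le -1) = 0$ $m$-a.e., on $\{|x| \le 1,\ |gx| > 1\}$ one has $gx > 1 > 0$, so $gx\, I_{\{|x| \le 1,\ |gx| > 1\}} \ge 0$ $K$-a.e.\ and its $K$-integral is unambiguously defined in $[0, \infty]$. Since $K(|x|\, I_{\{|x| > 1\}}) < \infty$, for each fixed $(\omega, t)$ one has $K(|gx|\, I_{\{|x| > 1\}}) \le |g|\, K(|x|\, I_{\{|x| > 1\}}) < \infty$; hence $b = b^h + K(x\, I_{\{|x| > 1\}})$ is well defined and $gb = g b^h + K(gx\, I_{\{|x| > 1\}})$, the correction being an absolutely convergent integral.

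It then remains to combine the pieces. From $1 - I_{\{|x| \le 1,\ |gx| \le 1\}} = I_{\{|x| > 1\}} + I_{\{|x| \le 1,\ |gx| > 1\}}$ one gets $K\big(gx(1 - I_{\{|x| \le 1,\ |gx| \le 1\}})\big) = K(gx\, I_{\{|x| > 1\}}) + K(gx\, I_{\{|x| \le 1,\ |gx| > 1\}})$; subtracting this from $gb = g b^h + K(gx\, I_{\{|x| > 1\}})$, the two copies of $K(gx\, I_{\{|x| > 1\}})$ cancel (legitimately, since that integral is finite), leaving
\[
gb - K\big(gx(1 - I_{\{|x| \le 1,\ |gx| \le 1\}})\big) = g b^h - K\big(gx\, I_{\{|x| \le 1,\ |gx| > 1\}}\big),
\]
which is precisely the drift in the first form of the criterion. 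The two remaining summands $|c^{1/2}g|^2$ and $K(|gx|^2 \wedge 1)$ are identical in both formulations, so the two finiteness conditions agree term by term, both $dA$-a.e.\ and after integration against $A_T$. I do not expect a genuine obstacle; the only subtle point is the one just addressed, namely that the cancellation is not of the form $\infty - \infty$, and ensuring this is exactly the purpose of the two standing hypotheses (the first makes the re-truncation correction sign-definite, the second makes the passage from $b^h$ to $b$ absolutely convergent).
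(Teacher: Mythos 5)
Your proposal is correct and matches what the paper (which states this proposition as a recall, with no written proof) implicitly intends: the first displayed criterion is quoted verbatim from \cite{ShCh} and \cite{JS:87} translated into the $(b^h,c,K,A)$ local form, and the second is obtained by the partition $1-I_{\{|x|\le 1,\;|gx|\le 1\}}=I_{\{|x|>1\}}+I_{\{|x|\le 1,\;|gx|>1\}}$ together with $gb=gb^h+K(gxI_{\{|x|>1\}})$, with the cancellation of $K(gxI_{\{|x|>1\}})$ justified by $K(|x|I_{\{|x|>1\}})<\infty$. Your observation that $K(gx\le -1)=0$ makes $gxI_{\{|x|\le 1,\;|gx|>1\}}$ nonnegative $K$-a.e.\ (so that its $K$-integral is well defined in $[0,\infty]$) correctly handles the only point at which the rearrangement could go wrong.
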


\begin{prop}
\label{global}  
Let $g$ be a predictable process such that \ $K(gx\le -1)=0$ $m$-a.e. and  (\ref{ineq}) holds. Let $g^n:=gI_{\{|g|\le n\}}$, $n\ge 1$, and suppose that the sequence $(g^n\cdot S_T)_n$ of random variables is bounded in probability. Then, $g\in L(S)$. 
\end{prop}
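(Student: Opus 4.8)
The plan is to verify the integrability criterion stated in the preceding proposition, namely that
$$
\big(|c^{1/2}g|^2 + K(|gx|^2\wedge 1) + |gb - K(gx(1 - I_{\{|x|\le 1,\,|gx|\le 1\}}))|\big)\cdot A_T < \infty \quad P\text{-a.s.}
$$
Since $g^n \to g$ pointwise and $g^n \cdot S$ is a well-defined semimartingale for each $n$ (as $g^n$ is bounded, hence trivially in $L(S)$), the strategy is to extract a.s.-finite bounds on the three ``mass'' terms above that are uniform in $n$, using the hypothesis that $(g^n\cdot S_T)_n$ is bounded in probability together with the key inequality (\ref{ineq}). First I would split each $g^n\cdot S$ into its local-martingale part and its finite-variation drift part relative to the truncation function, and exploit that boundedness in probability of the terminal values forces boundedness in probability of both the quadratic-variation functional of the martingale part and of the total-variation of the drift part — this is essentially the content of Appendix \ref{boundSE} / Proposition \ref{expbounded} applied at the level of stochastic integrals rather than stochastic exponentials, but here it has to be done by hand on a localizing sequence of stopping times.

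The main technical step is the following. Introduce stopping times $\tau_k := \inf\{t : (|c^{1/2}g|^2 + K(|gx|^2\wedge 1))\cdot A_t \ge k\}$ (with $g$ itself, not $g^n$, inside); I want to show $\tau_k \to T$ a.s., i.e. $P(\tau_k < T) \to 0$. On $[0,\tau_k]$ the processes $g^n$ agree with $g$ for $n$ large on the event where $|g|$ stays bounded, so one can compare. The crucial observation is that inequality (\ref{ineq}), $g(b - cg) \ge K((gx)^2/(1+gx)) \ge 0$ $m$-a.e., controls the drift of $g^n\cdot S$ from one side; combined with the nonnegativity constraint $K(gx \le -1)=0$ (so the jumps of $g\cdot S$ are bounded below by $-1$), this should let me bound the negative part of the drift and then use a Lenglart-type domination / the Markov inequality for the $L^2$-norm of the martingale part to transfer $P$-boundedness of $g^n\cdot S_T$ into $P$-boundedness of $(|c^{1/2}g^n|^2 + K(|g^nx|^2\wedge 1))\cdot A_T$, uniformly in $n$. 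Letting $n\to\infty$ and invoking Fatou/monotone convergence for the increasing-in-$n$ functionals then yields a.s. finiteness of the first two terms for $g$.

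Once $(|c^{1/2}g|^2 + K(|gx|^2\wedge 1))\cdot A_T < \infty$ a.s. is established, the third (drift) term is handled by decomposing $gx(1 - I_{\{|x|\le 1,\,|gx|\le 1\}}) = gxI_{\{|x|>1\}} + gxI_{\{|x|\le 1,\,|gx|>1\}}$: the second piece is dominated by $|gx|^2 I_{\{|x|\le1\}} \le |gx|^2\wedge 1$ wherever $|gx|>1$ fails, and by $|gx|^2\wedge 1$ otherwise — more carefully, $|gx|I_{\{|gx|>1\}} \le |gx|^2$ so its $K$-mass is controlled by the second term just bounded; the first piece $K(|gx|I_{\{|x|>1\}})$ is finite because on $\{|x|>1\}$ we are in the regime where (\ref{ass:sigma-special}) gives $K(|x|I_{\{|x|>1\}}) < \infty$ and $|g|<\infty$. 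For $gb$ itself, I would use (\ref{ineq}) once more: it gives $gb \ge gcg + K((gx)^2/(1+gx)) \ge 0$, bounding $gb\cdot A_T$ from below, while an upper bound comes from $gb = g(b-cg) + gcg$ with $g(b-cg)$ controlled by re-examining (\ref{inequality}) at suitable test directions $v$ and $gcg = |c^{1/2}g|^2$ already finite. Assembling these, the criterion holds and $g \in L(S)$.

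The step I expect to be the main obstacle is the uniform-in-$n$ transfer of $P$-boundedness from $g^n\cdot S_T$ to the quadratic/jump functionals: one must be careful that the drift of $g^n\cdot S$ is \emph{not} simply $F(0,g^n)$-related in a sign-definite way once truncation to $\{|g|\le n\}$ is imposed, so the clean inequality (\ref{ineq}) must be used in tandem with a localization argument that prevents the martingale part from ``hiding'' unbounded quadratic variation. This is precisely where a Lenglart domination inequality, or the elementary estimate $P(\langle M\rangle_T \ge \lambda) \le P(\sup|M| \ge \delta) + P(\langle M\rangle_{\sigma_\delta} \ge \lambda)$ with $\sigma_\delta$ a hitting time of the martingale part, must be invoked, and getting the bound genuinely uniform in $n$ is the delicate point.
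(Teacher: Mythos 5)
Your plan does identify the correct high-level target — verify the Shiryaev–Cherny integrability criterion using (\ref{ineq}) plus boundedness in probability of $(g^n\cdot S_T)_n$ — but the execution has genuine gaps, and you misidentify where the real difficulty lies.

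First, your worry that ``the drift of $g^n\cdot S$ is not simply $F(0,g^n)$-related in a sign-definite way once truncation is imposed'' is unfounded: $g^n = g I_{\{|g|\le n\}}$ agrees pointwise with either $g$ or $0$, and since $F(0,g)\le 0$ and $F(0,0)=0$ both hold $m$-a.e., one gets $F(0,g^n)\le 0$ for free. The clean drift inequality $g^n b \ge |c^{1/2}g^n|^2 + K\big((g^nx)^2/(1+g^nx)\big)$ is exactly what the paper exploits. The actual difficulty is elsewhere: the compensator $K\big((g^nx)^2/(1+g^nx)\big)$ behaves very differently on the regions $\{|gx|\le 1\}$, $\{1<gx\le 2\}$, $\{gx>2\}$, $\{|x|\le 1\}$, $\{|x|>1\}$, and each regime needs its own pairing of ``martingale part $+$ nonnegative compensator part.'' The paper's proof hinges on a five-way decomposition $g^n\cdot S_T\ge\sum_{j=1}^5 I^n_j$ designed precisely along these lines. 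Your plan never engages with this regime splitting; in particular your argument for $K(|gx|I_{\{|x|>1\}})\cdot A_T<\infty$ merely invokes $K(|x|I_{\{|x|>1\}})<\infty$ $m$-a.e.\ and $|g|<\infty$, which does not control the $dA$-integral over $[0,T]$ at all (it is a pointwise-in-$(\omega,t)$ statement, not an integrated one). The corresponding piece in the paper ($I^n_4$, $I^n_5$) takes real work.

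Second, the step you flag as ``the delicate point'' — turning boundedness in probability of $(g^n\cdot S_T)_n$ into a.s.\ finiteness of the compensator functionals, uniformly in $n$ — is the crux, and your Lenglart/stopping-time sketch does not resolve it. A Lenglart-type bound $P(\langle M^n\rangle_T\ge\lambda)\lesssim\ldots$ alone is not enough: boundedness in probability of terminal values of local martingales does not in general yield boundedness of their brackets, and you need the nonnegative-drift structure to enter \emph{before} localizing. The paper handles this with the laws-of-large-numbers lemmas (Lemmas \ref{LLN1}, \ref{LLN2} of Appendix \ref{LLN}), which show that for integrands truncated at level $n$, the sum ``martingale $+$ nonnegative compensator'' diverges to $+\infty$ a.s.\ on the event that the untruncated compensator is infinite. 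That is a much stronger statement than what Lenglart gives, and it is what forces each $I^n_j$ (and hence $g^n\cdot S_T$) to blow up if any of the relevant functionals is infinite. Without an equivalent of Lemmas \ref{LLN1}, \ref{LLN2} — or an explicit alternative that achieves the same a.s.\ divergence — the stopping-time route is a plan, not a proof. As it stands, the proposal is incomplete.
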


\begin{proof}
Using the decomposition 
$$
S=S_0+S^c+xI_{\{|x|\le 1\}}*(\mu-\nu)+xI_{\{|x|> 1\}}*\mu +b^h \cdot A
$$
and taking into account that  
$$
g^n b^h + K(g^n xI_{\{|x|>1\}})= g^n b \ge |c^{1/2}g^n|^2+K\Big( \frac {(g^nx)^2}{1+g^nx}\Big). 
$$
we obtain that 
$$
g^n\cdot S_T\ge \sum_{j=1}^5 I^n_j
$$ 
where
\bean
 I^n_1&:=&g^n\cdot S^c_T+|c^{1/2}g^n|^2\cdot A_T,\\ 
 I^n_2&:=&g^nxI_{\{|x|\le 1,\;|gx|\le 1\}}*(\mu-\nu)_T+K\big( (g^nx)^2/(1+g^nx)I_{\{|x|\le 1,\;|gx|\le 1\}}\big)\cdot A_T,\\
  I^n_3&:=&g^nxI_{\{|x|\le 1,\;gx> 1\}}*(\mu-\nu)_T+K\big( (g^nx)^2/(1+g^nx)I_{\{|x|\le 1,\;gx> 1\}}\big)\cdot A_T,\\
   I^n_4&:=&g^nxI_{\{|x|>1,\;|gx|\le 1\}}*(\mu-\nu)_T+K\big( (g^nx)^2/(1+g^nx)I_{\{|x|> 1,\;|gx|\le 1\}}\big)\cdot A_T,\\
    I^n_5&:=&g^nxI_{\{|x|> 1,\;gx> 1\}}*\mu_T-K\big( g^nx/(1+g^nx)I_{\{|x|> 1,\;gx> 1\}}\big)\cdot A_T.
\eean 

We treat the five summands above separately.

\noindent {\bf 1.}  In virtue of Lemma \ref{LLN2}
the sequence $I^n_1$ diverges to $+\infty$ a.s. on the set 
$$
\{|c^{1/2}g|^2\cdot A_T=\infty\}
$$ 
and is bounded on its complement. 

\smallskip
\noindent {\bf 2.}  
 Since
 \bean
 K\big( (g^nx)^2/(1+g^nx)I_{\{|x|\le 1,\;|gx|\le 1\}}\big)\cdot A_T&\ge& \frac 12 K\big( (g^nx)^2I_{\{|x|\le 1,\;|gx|\le 1\}}\big)\cdot A_T\\
 &\ge& \frac 12 \big\langle g^nxI_{\{|x|\le 1,\;|gx|\le 1\}}*(\mu-\nu)\big\rangle_T,
 \eean
 we infer, again from Lemma \ref{LLN2} that the sequence $I^n_2$ diverges to  $+\infty$ a.s. 
on the set 
 $$\big\{(gx)^2I_{\{|x|\le 1,\;|gx|\le 1\}}*\nu_T=\infty\big\}$$ and is bounded on its complement. 
 
\smallskip
\noindent {\bf 3.}   Rearranging terms, we rewrite $  I^{n}_{3}$ as follows:
$$
   I^{n}_{3}= I^{n}_{3,1}+  I^{n}_{3,2}
$$
 where  
 \bean 
 I^{n}_{3,1}&:=&g^nxI_{\{|x|\le 1,\;1<gx\le 2\}}*(\mu-\nu)_T+K\big( (g^nx)^2/(1+g^nx)I_{\{|x|\le 1,\;1<gx\le 2\}}\big)\cdot A_T,\\
 I^{n}_{3,2}&:=&g^nxI_{\{|x|\le 1,\;gx> 2\}}*\mu_T-K\big( g^nx/(1+g^nx)I_{\{|x|\le 1,\;gx> 2\}}\big)
   \cdot A_T. 
 \eean
Note that 
\bean 
(1/3)K\big((g^nx)^2I_{\{|x|\le 1,\;1<gx\le 2\}}\big)\cdot A_T&\le&  K\big( (g^nx)^2/(1+g^nx)I_{\{|x|\le 1,\;1<gx\le 2\}}\big)\cdot A_T\\
& \le& (1/2)K\big(g^nx)^2I_{\{|x|\le 1,\;1<gx\le 2\}}\big)\cdot A_T;
\eean
we infer from Lemma  \ref{LLN2} that $(I^{n}_{3,1})_n$   a.s. diverges to $\infty$
on $\big\{K\big(I_{\{|x|\le 1,\;1<gx\le 2\}}\big)\cdot A_T=\infty \big\}$, while it is bounded on its complement. Also 
$$
I^{n}_{3,2}\ge 2I_{\{|g|\le n, \;|x|\le 1,\;gx> 2\}}*\mu_T-K\big(I_{\{|g|\le n, \;|x|\le 1,\;gx> 2\}}\big)
   \cdot A_T
$$
and Lemma  \ref{LLN1} implies that the sequence $I^{n}_{3,2}$ diverges to $+\infty$ a.s. 
on the set 
$$\big\{K\big(I_{\{|x|\le 1,\;gx> 2\}}\big)\cdot A_T=\infty\big\}$$
 and is bounded from below on its complement.

\smallskip
\noindent {\bf 4.}  The sequence $I^n_4$ is  bounded from below and diverges to  $+\infty$ a.s. 
on the set 
$$\big\{K\big( (gx)^2I_{\{|x|> 1,\;|gx|\le 1\}}\big)\cdot A_T=\infty\big\}.$$ 
This follows from the estimates
$$
|g^nxI_{\{|x|>1,\;|gx|\le 1\}}*(\mu-\nu)_T|\le I_{\{|x|>1\}}*(\mu+\nu)_T<\infty 
$$
and  
  $$
 K\big((g^nx)^2/(1+g^nx)I_{\{|x|> 1,\;|gx|\le 1\}}\big)\cdot A_T \ge \frac 12 K\big((g^nx)^2I_{\{|x|> 1,\;|gx|\le 1\}}\big)\cdot A_T.
  $$

\smallskip
\noindent {\bf 5.}  Note that the sequence $|I^n_5|$ is bounded by a finite random variable. Indeed,  
$$
   |I^n_5|\le g^nxI_{\{|x|> 1,\;gx> 1\}}*\mu_T+K\big( g^nx/(1+g^nx)I_{\{|x|> 1,\;gx> 1\}}\big)\cdot A_T
$$  
where the first integral is dominated  by the integral $gxI_{\{|x|> 1,\;gx> 1\}}*\mu_T$ (which is just a finite sum)
while the second is dominated  by $K\big (I_{\{|x|> 1,\;gx> 1\}}\big)\cdot A_T<\infty$. 

\medskip
 
Combining the above five facts, we obtain that  the inequality (\ref{ineq}) implies that 
 $$
|c^{1/2}g|^2\cdot A_T+K\big( (gx)^2\wedge 1)\big)\cdot A_T<\infty.
 $$ 
 Now we check that 
 $$
\big |gb-K\big(gx\big (1-I_{\{|x|\le 1,\;|gx|\le 1\}}\big)\big)\big |\cdot A_T<\infty
 $$
 or, equivalently, that 
 \beq\label{check2}
\big |K\big(gx/(1+gx)- gxI_{\{|x|\le 1,\;|gx|\le 1\}}\big)\big |\cdot A_T<\infty.   
 \eeq
 By the above, 
 $$
 K\big((gx)^2/(1+gx)I_{\{\{|x|\le 1,\;|gx|\le 1\}}\big)\cdot A_T\le \frac 12 K\big((gx)^2I_{\{|gx|\le 1\}}\big)\cdot A_T<\infty
 $$
 and
 $$
 K\big(gx/(1+gx)I_{\{gx> 1\}}\big)\cdot A_T\le K\big (I_{\{gx> 1\}}\big)\cdot A_T<\infty. 
 $$ 
 Also 
 $$
 K\big(gx/(1+gx)I_{\{|x|> 1,\; 0\le gx \le 1\}}\big)\cdot A_T\le K\big(I_{\{|x|> 1\}}\big)\cdot A_T<\infty. 
 $$ 
 Finally, 
 $$
 K\big(|gx|/(1+gx)I_{\{|x|> 1,\; -1\le gx \le 0\}}\big)\cdot A_T<\infty, 
 $$
because in the opposite case the sequence   $I_4^n$ will diverges to infinity on the set of positive probability. These observations show that (\ref{check2}) holds and, therefore, $g \in L(S)$.
\end{proof}

\section{Proof of Implication $(i) \Rightarrow (iv)$ of Theorem \ref{main-I}: the Concluding Step}
\label{num_change}

\subsection{Change of basic securities}

Fix $g\in L(S)$ with $g\Delta S>-1$, and define the semimartingale 
 \beq
 \label{barS}
S^g = S - (cg) \cdot A - \sum_{s\le .}\frac {g_s\Delta S_s}{1+g_s\Delta S_s}\Delta S_s,
\eeq
exactly as in (\ref{R}).
Since $\Delta S^g = \Delta S/(1+g\Delta S)$ we may invert the previous operation and obtain   
 $$
 S=S^g +cg\cdot A+\sum_{s\le .}(g_s\Delta S_s)\Delta S^g_s. 
 $$
\begin{lemm}
With the above notation, $L(S)=L(S^g)$. 
\end{lemm}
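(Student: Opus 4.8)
The statement to prove is that $L(S) = L(S^g)$, where $S^g$ is defined via the change-of-numéraire formula \eqref{barS}, given $g \in L(S)$ with $g\Delta S > -1$.

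The plan is to exploit the symmetry already exposed in the excerpt: $S^g$ is obtained from $S$ by a linear transformation involving $g$, and conversely $S = S^g + (cg)\cdot A + \sum_{s\le\cdot}(g_s\Delta S_s)\Delta S^g_s$, which is the same type of transformation with $g$ replaced by an integrand adapted to $S^g$. So it suffices to prove one inclusion, say $L(S) \subseteq L(S^g)$, and then apply it again in the reverse direction. Concretely, given $H \in L(S)$ I would show $H \in L(S^g)$ by writing
\[
H \cdot S^g = H \cdot S - H(cg)\cdot A - \sum_{s\le\cdot} H_s \frac{g_s\Delta S_s}{1+g_s\Delta S_s}\Delta S_s,
\]
and checking that each term on the right is a well-defined process: $H\cdot S$ exists by hypothesis; the continuous finite-variation term $H(cg)\cdot A$ is controlled because $|H(cg)| \le |c^{1/2}H|\,|c^{1/2}g|$ and both $|c^{1/2}H|^2\cdot A_T$ and $|c^{1/2}g|^2\cdot A_T$ are finite by the integrability criterion of the Proposition preceding \ref{global}; and the purely discontinuous sum needs to be shown to be of finite variation (or at least to combine with $H\cdot S$ to give a semimartingale). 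The cleanest route is to observe that $\cE(g\cdot S) > 0$ with $\cE_-(g\cdot S) > 0$, so that $\cE(g\cdot S)$ and its reciprocal are locally bounded away from $0$ and $\infty$, and then to use the ratio representation \eqref{R0}–\eqref{R}: since $1/\cE(g\cdot S) = \cE(-g\cdot S^g)$ and multiplication by a strictly positive semimartingale with strictly positive left limits (and its inverse) preserves $S$-integrability, one can transfer integrability back and forth.

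More precisely, the key mechanism I would use is: if $V$ is a strictly positive semimartingale with $V_- > 0$, then $H \in L(X)$ if and only if $(V_- H) \in L(V_-^{-1}\cdot X)$-type statements, combined with the associativity of the stochastic integral and the fact (recalled in the excerpt, in the discussion of $1+\cX^1_>$) that $V_-^{-1}\cdot V$ and $V_-\cdot(\cdot)$ invert each other. Taking $V = 1 + g\cdot S = \cE(g\cdot S)$, one has $H\cdot S^g = H\cdot (V^{-1}\cdot\!(\text{something}))$; alternatively and more directly, from $\cE(f\cdot S)/\cE(g\cdot S) = \cE((f-g)\cdot S^g)$ one sees that $(f-g)\cdot S^g$ is a well-defined semimartingale for every $f \in L(S)$ with $f\Delta S > -1$, and a localization/linearity argument extends this from such differences $f-g$ to all of $L(S)$. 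Then the symmetric formula $S = S^g + \cdots$ shows $S^{g}$ plays the role of the "base" process and $S$ is obtained from it by the same construction with integrand $\tilde g$ where $\tilde g \Delta S^g = g\Delta S/(1+g\Delta S) \cdot(1+g\Delta S)$... one checks $\tilde g$ is simply (a modification of) $g$ viewed against $S^g$, giving the reverse inclusion $L(S^g) \subseteq L(S)$ for free.

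The main obstacle I expect is handling the purely discontinuous finite-variation correction term $\sum_{s\le\cdot} H_s \frac{g_s\Delta S_s}{1+g_s\Delta S_s}\Delta S_s$ rigorously: one must verify it is absolutely convergent (of finite variation) whenever $H \in L(S)$, without circular reasoning. The bound $\big|\frac{g\Delta S}{1+g\Delta S}\Delta S\big|$ needs to be split according to whether $|g\Delta S|$ is small (where it behaves like $|g\Delta S|\,|\Delta S| \le |g||\Delta S|^2$, controlled by the quadratic-variation part of $S$-integrability) or bounded away from $-1$ (a discrete-in-effect set of jumps where finiteness is more delicate and uses $g\Delta S > -1$ together with the structure of $L(S)$). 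The honest way around this is to avoid the termwise estimate altogether and argue at the level of semimartingales and stochastic exponentials as in the previous paragraph, invoking the already-established identity \eqref{R0} together with the general principle that $\cE(Z)$-multiplication is an automorphism of the space of semimartingales that are stochastic integrals of a fixed $S$; the finite-variation term then never has to be estimated in isolation. I would present the proof in that form, with the termwise estimates relegated to a remark or omitted as "straightforward."
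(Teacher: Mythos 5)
Your decomposition of $H\cdot S^g$ into $H\cdot S$ minus a continuous finite-variation correction and a pure-jump correction is exactly the paper's route, and your treatment of the continuous part (Cauchy--Schwarz for the nonnegative matrix $c$, using $|c^{1/2}f|^2\cdot A_T<\infty$ and $|c^{1/2}g|^2\cdot A_T<\infty$) is correct. The problem is that you then misidentify the jump estimate as a delicate obstacle and retreat from it, when in fact it is a one-line bound and is the actual content of the proof. The term to control is $\sum_{s\le T}\bigl|(g_s\Delta S_s)(H_s\Delta S_s)\bigr|/(1+g_s\Delta S_s)$, and the bound you sketch, $|g\Delta S|\,|\Delta S|\le |g|\,|\Delta S|^2$, is both wrong in form (it drops $H$) and not summable (nothing controls $\sum_s |g_s|\,|\Delta S_s|^2$ for unbounded $g$). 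The correct estimate is the elementary $|ab|\le \tfrac12 a^2+\tfrac12 b^2$ applied to $a=H\Delta S$, $b=g\Delta S$: since $H\cdot S$ and $g\cdot S$ are semimartingales their jumps are square-summable, and $1+g\Delta S\ge 1/2$ except at finitely many times, so $\sum_s (H_s\Delta S_s)^2/(1+g_s\Delta S_s)$ and $\sum_s (g_s\Delta S_s)^2/(1+g_s\Delta S_s)$ are both finite. The reverse inclusion is the mirror image, using $(S^g)^c=S^c$ and the square-summability of $f\Delta S^g$ for $f\in L(S^g)$ against the inverse formula $S=S^g+cg\cdot A+\sum(g\Delta S)\Delta S^g$.

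The alternative you say you would actually present --- deducing the lemma from the exponential identity $\cE(f\cdot S)/\cE(g\cdot S)=\cE((f-g)\cdot S^g)$ and a ``multiplication by $\cE(g\cdot S)$ is an automorphism'' principle --- does not work as a substitute. First, that identity only concerns integrands with $f\Delta S>-1$, and your proposed ``localization/linearity argument'' to pass to arbitrary $h\in L(S)$ is not supplied and is not routine: for general $h\in L(S)$, $h+g$ need not satisfy $(h+g)\Delta S>-1$, and $L(S)$ is not exhausted by differences of such integrands in any obvious way. Second, the identification of the ratio as the vector stochastic integral $(f-g)\cdot S^g$ (rather than merely as some stochastic exponential) already presupposes $f-g\in L(S^g)$, which is what the lemma is for; indeed the paper proves this lemma precisely so that Lemma \ref{lshift} and the set identities $1+\cX^1_>(S^g)=\cE^{-1}(g\cdot S)(1+\cX^1_>(S))$ can be stated, so leaning on those facts here is circular. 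You should commit to the direct termwise proof; it is shorter than the workaround.
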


\begin{proof}
Let $f\in L(S)$. Then 
$$
|(f,cg)|\cdot A_T\le \frac12|c^{1/2}f|^2\cdot A_T +  \frac12|c^{1/2}g|^2\cdot A_T <\infty,
$$
$$
\sum_{s\le T}\frac {|g_s\Delta S_sf_s\Delta S_s|}{1+g_s\Delta S_s}\le \frac12
 \sum_{s\le T}\frac {|f_s\Delta S_s|^2}{1+g_s\Delta S_s}+\frac12 
 \sum_{s\le T}\frac {|g_s\Delta S_s|^2}{1+g_s\Delta S_s}<\infty. 
 $$
 Thus, $L(S)\subseteq L(S^g)$. To show the opposite inclusion, take $f\in L(S^g)$. The conditions $g\in L(S)$ and $f\in L(S^g)$ implies $f$ and $g$  are integrable with respect to $S^c=(S^g)^c$, i.e. 
 that $|c^{1/2}g|^2\cdot A_T <\infty$ and $|c^{1/2}f|^2\cdot A_T <\infty$. 
 So, as above we have that $|(f,cg)|\cdot A_T$.  Since also  
 $$
 \sum_{s\le t}|(g_s\Delta S_s)(f_S\Delta S^g_s)|\le \frac12
 \sum_{s\le T}|g_s\Delta S_s|^2+\frac12 
 \sum_{s\le T}|f_s\Delta S^g_s|^2<\infty,  
 $$
 we get that $f\in L(S)$, i.e. the inclusion $L(S^g)\subseteq L(S)$ holds. 
\end{proof}

\begin{lemm} 
\label{lshift}
With the previous notation, we have the identity 
\beq
\label{shift}
\{h\in L(S^g):\ h\Delta S^g >-1\}=\{f\in L(S):\ f\Delta S>-1\}-g.
\eeq
Furthermore, for $f\in L(S^g)$ with $f\Delta S>-1$, 
\beq
\label{barS_2}
\frac{\cE(f\cdot S)}{\cE(g\cdot S)}=
\cE((f-g)\cdot S^g). 
\eeq
 \end{lemm}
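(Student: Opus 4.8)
The plan is to establish the set identity \eqref{shift} first, and then obtain the multiplicative representation \eqref{barS_2} by combining the previous lemma (which gives $L(S) = L(S^g)$) with the relation between $\Delta S$ and $\Delta S^g$ already recorded above the statement, namely $\Delta S^g = \Delta S / (1 + g \Delta S)$. The key algebraic observation driving both parts is that $1 + h \Delta S^g$ and $1 + (h + g) \Delta S$ differ only by a strictly positive multiplicative factor: substituting $\Delta S^g = \Delta S / (1 + g \Delta S)$ gives
$$
1 + h \Delta S^g = 1 + \frac{h \Delta S}{1 + g \Delta S} = \frac{1 + (h+g)\Delta S}{1 + g \Delta S}.
$$
Since $g \Delta S > -1$ by hypothesis, the denominator $1 + g \Delta S$ is strictly positive, so $h \Delta S^g > -1$ holds if and only if $(h + g) \Delta S > -1$; writing $f := h + g$ yields the inclusion ``$\subseteq$'' of \eqref{shift}, and reading the equivalence in the other direction (with $h := f - g$) yields ``$\supseteq$''. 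One must also check that the correspondence $h \leftrightarrow f = h + g$ respects integrability: this is exactly the content of the preceding lemma, since $f \in L(S)$ iff $f \in L(S^g)$, and $g \in L(S) = L(S^g)$, so $h = f - g \in L(S^g)$ iff $f \in L(S^g) = L(S)$.

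For \eqref{barS_2}, I would invoke Yor's product formula exactly as in the computation preceding Lemma \ref{superm}: formula \eqref{R0} together with the explicit expression \eqref{R} for $S^g$ already give $\cE(f \cdot S) / \cE(g \cdot S) = \cE((f - g) \cdot S^g)$ for any $f, g \in L(S)$ with $f \Delta S > -1$ and $g \Delta S > -1$. The only new point here, relative to \eqref{R0}, is that the hypothesis of Lemma \ref{lshift} is stated as ``$f \in L(S^g)$ with $f \Delta S > -1$'' rather than ``$f \in L(S)$''; but by the previous lemma $L(S^g) = L(S)$, so the two formulations coincide, and the condition $f \Delta S > -1$ is precisely what is needed for $\cE(f \cdot S)$ to be a well-defined strictly positive stochastic exponential, while $(f - g) \Delta S^g > -1$ follows from the set identity \eqref{shift} just established. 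Thus \eqref{barS_2} is a direct consequence of \eqref{R0}.

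I do not expect any serious obstacle here; the lemma is essentially a bookkeeping statement that packages the jump-rescaling identity and the integrability equivalence into a clean form for later use. The one place requiring a little care is making sure that the stochastic exponentials $\cE(f \cdot S)$ and $\cE((f-g) \cdot S^g)$ are genuinely well defined as semimartingales (not merely formally), which is why the integrability equivalence $L(S) = L(S^g)$ from the preceding lemma is essential and must be cited explicitly rather than taken for granted. Everything else is the short substitution displayed above, together with a pointer back to the Yor-product computation in \eqref{R0}--\eqref{R}.
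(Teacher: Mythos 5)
Your proof is correct and takes essentially the same approach as the paper's: both reduce \eqref{shift} to the algebraic identity $1 + h\Delta S^g = (1 + (h+g)\Delta S)/(1 + g\Delta S)$ (the paper writes out the two inclusions separately, using $\Delta S^g = \Delta S/(1+g\Delta S)$ in one direction and the inverse substitution $\Delta S = \Delta S^g/(1 - g\Delta S^g)$ in the other, while you observe the equivalence in a single step), invoke the preceding lemma for $L(S) = L(S^g)$, and dispose of \eqref{barS_2} by pointing to \eqref{R0}. The only cosmetic difference is that your one-shot iff argument is slightly more economical than the paper's two separate substitutions.
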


\begin{proof}
If $h:=f-g$ belongs to the set in the right-hand side of (\ref{shift}), then $h\in L(S)=L(S^g)$ and  
$$
h\Delta S^g= (f-g)\Delta S^g = \frac{(f-g)\Delta S}{1+g\Delta S}=\frac{1+f\Delta S}{1+g\Delta S}-1>-1.
$$
That is, $f-g$ belongs to the set in the left-hand side of (\ref{shift}).  On the other hand, let $h$ belong to the left-hand side of (\ref{shift}). Then $f:=h+g$ belongs to $L(S)$. Substituting the expression $\Delta S= \Delta S^g / (1- \Delta S^g)$  we obtain
$$
f\Delta S= (h+g)\Delta S^g = \frac{(h+g)\Delta S^g}{1-g\Delta S^g}=\frac{(1+h\Delta S^g}{1-g\Delta S^g}-1>-1.
$$
Therefore, $h$ belongs to the right-hand side of (\ref{shift}). Formula (\ref{barS_2}) is (\ref{R0}).  
\end{proof}

\begin{rem}
The statement of Lemma \ref{lshift} implies that   
\beq
\label{idset1}
1+\cX^1_>(S^g)=\cE^{-1}(g\cdot S)(1+\cX^1_>(S)). 
\eeq
It is easy to deduce from the above equality of the two sets that  
\beq
\label{idset2}
1+\cX^1(S^g)=\cE^{-1}(g\cdot S)(1+\cX^1(S)). \\
\eeq
Indeed, let  
$1+H\cdot S^g \ge 0$. Then $1+(1/2)H\cdot S^g>0$ and, in virtue of (\ref{idset1}), there is $\tilde H$ in $L(S)$ such that $1+\tilde H\cdot S>0$ and $\cE(g\cdot S)(2+H\cdot S^g)=
2(1+\tilde H\cdot S)$.  It follows that  
$$
\cE(g\cdot S)(1+H\cdot S^g)=2(1+\tilde H\cdot S)-\cE(g\cdot S)=1+(2\tilde H-\cE_-(g\cdot S))\cdot S\in 1+\cX^1(S). 
$$
Thus, we have the inclusion ``$\subseteq$'' in (\ref{idset2}). Since $\cE(-g\cdot S^g)=\cE^{-1}(g\cdot S)$, the same arguments work in the proof of the reverse inclusion.  
\end{rem}

\subsection{Supermartingale num\'eraire is local martingale num\'eraire under equivalent probability}
The following result is the last ingredient needed to complete the proof of implication $(i)\Rightarrow (iv)$ of Theorem \ref{main-I}.
\begin{lemm} \label{lem:final}
Let $g \in L(S)$ with $g \Delta S > -1$ be such that $\cE(f\cdot S) / \cE(g\cdot S)$ is a $P$-supermartingale for all $f \in L(S)$ with $f \Delta S > -1$. Then, for every $\e>0$ there exists a probability $\tilde P\sim P$ with $|P-\tilde P|_{TV}<\e$ such that $\cE(f\cdot S) / \cE(g\cdot S)$ is a local $\tilde P$-martingale for all $f \in L(S)$ with $f \Delta S > -1$. 
\end{lemm}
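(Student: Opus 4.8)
The plan is to recast the hypothesis, by means of the change of basic securities developed in \S\ref{num_change}, as the statement that $P$ is a \emph{separating measure} for the semimartingale $S^g$ of (\ref{barS}), and then to invoke the Delbaen--Schachermayer theorem reproduced in Appendix \ref{existsigma}: every total-variation neighbourhood of a separating measure contains an equivalent probability under which $S^g$ is a $\sigma$-martingale.

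First I would record the reformulation. By Lemma \ref{lshift} --- specifically identities (\ref{shift}) and (\ref{barS_2}) --- the shift $f\mapsto f-g$ carries $\{f\in L(S):\ f\Delta S>-1\}$ onto $\{h\in L(S^g):\ h\Delta S^g>-1\}$, while $\cE(f\cdot S)/\cE(g\cdot S)=\cE((f-g)\cdot S^g)$. Hence the hypothesis is equivalent to: $\cE(h\cdot S^g)$ is a $P$-supermartingale for every $h\in L(S^g)$ with $h\Delta S^g>-1$, i.e.\ $1+X$ is a $P$-supermartingale for every $X\in\cX^1_>(S^g)$. In particular $E_P[(1+X)_T]\le 1$ for such $X$, and since $\frac12 X\in\cX^1_>(S^g)$ whenever $X\in\cX^1(S^g)$, we obtain $E_P[X_T]\le 0$ for every $X\in\cX^1(S^g)$, and then, by scaling and domination, $E_P[\xi]\le 0$ for every bounded $\xi$ dominated by some such $X_T$. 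This is precisely the assertion that $P$ is a separating measure for $S^g$ in the terminology of \cite{K97}, in the sense used in Appendix \ref{existsigma}.

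Next, for a given $\e>0$, I would apply the result of \cite{DS:98} from Appendix \ref{existsigma} to get $\tilde P\sim P$ with $|\tilde P-P|_{TV}<\e$ under which $S^g$ is a $\sigma$-martingale. Since $L(S^g)$ is unchanged under equivalent changes of probability, for every $X\in\cX^1_>(S^g)$ the process $X=H\cdot S^g$ (with $H$ a $1$-admissible integrand) is a $\tilde P$-$\sigma$-martingale --- stochastic integrals with respect to $\sigma$-martingales being $\sigma$-martingales --- which is bounded below by $-1$, hence a local $\tilde P$-martingale (a $\sigma$-martingale bounded below by a constant is a local martingale; cf.\ the reasoning in the proof of Lemma \ref{superm} and \cite{GK}). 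Therefore $1+X$ is a local $\tilde P$-martingale for every $X\in\cX^1_>(S^g)$. Finally, taking any $f\in L(S)$ with $f\Delta S>-1$ and setting $h:=f-g$, identities (\ref{shift}) and (\ref{barS_2}) give $\cE(f\cdot S)/\cE(g\cdot S)=\cE(h\cdot S^g)\in 1+\cX^1_>(S^g)$, which by the above is a local $\tilde P$-martingale; this is exactly the desired conclusion.

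I expect the only genuine obstacle to be the step invoking \cite{DS:98}: producing an equivalent $\sigma$-martingale measure for $S^g$ in an arbitrarily small total-variation neighbourhood of the separating measure $P$ is the deep ingredient, which is why it is isolated in Appendix \ref{existsigma}. The reformulation via $S^g$ and the concluding $\sigma$-martingale bookkeeping are routine once the lemmas of \S\ref{num_change} and the facts about stochastic exponentials collected in \S\ref{roadmap} are in hand.
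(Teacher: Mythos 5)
Your argument is correct and follows the same route as the paper: reformulate the hypothesis via Lemma \ref{lshift} to conclude that $P$ is a separating measure for $S^g$, invoke Theorem \ref{sig} (Delbaen--Schachermayer) to produce $\tilde P$ with $|\tilde P-P|_{TV}<\e$ making $S^g$ a $\tilde P$-$\sigma$-martingale, and transfer back. The paper's proof is essentially identical, only more compressed.
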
  
\begin{proof} 
Let $S^g$ be given by (\ref{barS}). It follows from Lemma \ref{lshift} that for every $h\in L(S)$ for which $h\Delta S>-1$ the process $\cE(h\cdot S^g)$ is a supermartingale, which implies that $E\cE(h\cdot S^g)_T\le 1$. In other words, $EH\cdot S_T\le 0$ for every $H\cdot S^g \in \cX^1_>$, hence, for every 
integral $H\cdot S^g$ provided that it is bounded from below. This means that the probability $P$ is a separating measure for $S^g$, and an application 
of Theorem \ref{sig}  implies the existence of   $\tilde P\sim P$ with $|P-\tilde P|_{TV} <\e$ such that $S^g$ is a $\sigma$-martingale with respect to $\tilde P$. 
It follows that all bounded from below integrals $H\cdot S^g$ are local $\tilde P$-martingales. In other words, $\cE(f\cdot S)/\cE(g\cdot S)$ is a local $\tilde P$-martingale for all $f\in L(S)$ with $f\Delta S>-1$, which is exactly what was required.
\end{proof} 

\subsection{Completing the proof of Theorem \ref{main-I}}
\label{subsec:proof_final}
We now discuss how the combination of previous results imply Theorem \ref{main-I}. Only implication $(i) \Rightarrow (iv)$ and the statement regarding L\'evy measures with finite support needs argument. Fix $\e > 0$. A combination of Lemma \ref{special} and Theorem \ref{kara-kard} imply that there exists $P' \sim P$ with $|P' - P|_{TV} \leq \e / 2$ and $g \in L(S)$ with $g \Delta S > -1$ such that $\cE (f \cdot S) / \cE (g \cdot S)$ is a $P'$-supermartingale for all $f \in L(S)$ with $f \Delta S > -1$. Then, Lemma \ref{lem:final} implies that there exists $\tilde P \sim P'$ with $|\tilde P - P|_{TV} \leq \e / 2$ (and, therefore, $|\tilde P - P|_{TV} \leq \e$) such that $\cE (f \cdot S) / \cE (g \cdot S)$ is a local $\tilde P$-martingale for all $f \in L(S)$ with $f \Delta S > -1$, which establishes implication $(i) \Rightarrow (iv)$. When the L\'evy measure in $m$-a.s. concentrated in a finite number of points, Theorem \ref{kara-kard} can be used directly and we can choose $P' = P$. According to \S \ref{discrete}, $\cE (f \cdot S) / \cE (g \cdot S)$ is already a local $P$-martingale for all $f \in L(S)$ with $f \Delta S > -1$, which means we can select $\tilde P = P$.

\begin{rem}
For the proof of implication $(i) \Rightarrow (iv)$ of Theorem \ref{main-I}, we first perform a probability measure change with Lemma \ref{special} and then use Theorem \ref{kara-kard} in order to establish the existence of supermartingale num\'eraire. Making a direct reference to results of \cite{Kara-Kard} or \cite{Kard13} one can obtain a more precise result: condition NA$_1$ implies that the supermartingale num\'eraire $\cE(g \cdot S)$ under $P$ exists, and, additionally, in any neighbourhood (in total variation) of $P$ there exists a probability $\tilde P \sim P$ under which the \emph{same} wealth process $\cE(g \cdot S)$ is actually a local martingale num\'eraire. 
\end{rem}

\appendix

\section{Arbitrage Theory Revisited}
\label{sec:arbitrage}

\subsection{Condition NA$_1$: equivalent formulations}

We discuss equivalent forms of condition NA$_1$ in the context of a general abstract setting, where the model is given by specifying the wealth processes. The advantage of this  generalisation is that one may use only elementary properties without any reference to stochastic calculus and integration theory.

Let  $\cX^1$ be a convex set  of c\`adl\`ag processes $X\ge -1$ with $X_0=0$, containing the zero process. For $x\ge 0$ we put  $\cX^x=x\cX^1$, and note that $\cX^x \subseteq \cX^1$ when $x\in [0,1]$. 
Set $\cX:={\rm cone}\,\cX^1={\R}_+\cX^1$ and define the sets of terminal random variables 
$\cX^1_T:=\{X_T:\  X\in \cX^1\}$ and   $\cX_T:=\{X_T:\ X\in \cX\ \}$. In this setting, the elements of $\cX$ are interpreted as  admissible wealth processes  starting from zero initial capital; the  elements of $\cX^x$ are  called $x$-admissible.

\begin{rem}[``Standard'' model]
In the typical example, a $d$-dimensional semimartingale $S$ is given and $\cX^1$ is the set of stochastic integrals $H\cdot S$ where $H$ is $S$-integrable and $H\cdot S\ge -1$. Though our main
result deals with the standard model,  the discussion of  basic definitions and their relations with  concepts of the arbitrage theory is more natural in more general framework.
\end{rem}

Define the set of strictly $1$-admissible processes
$\cX^1_> \subseteq \cX^1$ composed of  $X\in \cX^1$
such that $X > -1$ and $X_->-1$. The sets $x+\cX^x$, $x+\cX^x_>$ etc., $x\in {\R}_+$, have  obvious interpretation. 
We are particularly interested in the set  $1+\cX^1_>$. Its elements are strictly positive wealth processes starting with unit initial capital, and may be thought as \emph{tradeable num\'eraires}.

For $\xi\in L^0_+$, define the \emph{superreplication price} $\bar x (\xi) := \inf \{x:\ \xi\in x+\cX^x_T-L^0_+\}$. We say that the wealth-process family $\cX$ satisfies condition NA$_1$ (\emph{No Arbitrage of the 1st Kind}) if $\bar x(\xi)>0$ holds for every $\xi\in L^1_+\setminus\{0\}$. Alternatively, condition NA$_1$ can be defined via
$$
\Big(\bigcap_{x>0}\{x+\cX^x_T-L^0_+\}\Big)\cap L^0_+=\{0\}. 
$$

The family $\cX$ is said to satisfy condition NAA$_1$ (\emph{No Asymptotic Arbitrage of the 1st Kind}) if for any sequence $(x^n)_n$ of positive numbers with $x^n\downarrow 0$ and any sequence of value processes $X^n   \in \cX$ such that $x^n+X^n\ge 0$, it holds that $\limsup_n P(x^n+X^n_T\ge 1)=0$.

Finally, the family $\cX$ satisfies condition NUPBR (\emph{No Unbounded Profit with Bounded Risk}) if the set $\{X_T:\ X\in \cX^1_>\}$ is $P$-bounded. Since $(1/2)\cX^1_T=\cX^{1/2}_T\subseteq \{X_T:\ X\in \cX^1_>\}$, the sets  $\{X_T:\ X\in \cX^1_>\}$ and $\cX^1_T$ are $P$-bounded simultaneously. 

The next result shows that all three previous notions coincide.

\begin{lemm} 
\label{A4}
NAA$_1$ $\ \Leftrightarrow\  $ NUPBR $\ \Leftrightarrow\ $ NA$_1$.
\end{lemm}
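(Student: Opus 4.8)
The plan is to establish the cycle of implications NAA$_1$ $\Rightarrow$ NUPBR $\Rightarrow$ NA$_1$ $\Rightarrow$ NAA$_1$, keeping to the abstract setting where only convexity of $\cX^1$ and the scaling $\cX^x = x\cX^1$ are available, so no stochastic calculus enters.

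For the implication NAA$_1$ $\Rightarrow$ NUPBR, I would argue by contraposition. If NUPBR fails, the set $\cX^1_T$ is not bounded in probability, so there exist $\alpha > 0$, a sequence $X^n \in \cX^1$, and constants $c_n \to \infty$ with $P(X^n_T \ge c_n) \ge \alpha$ for all $n$. Put $x^n := 1/c_n \downarrow 0$ and consider the scaled processes $x^n X^n \in \cX^{x^n} \subseteq \cX$; then $x^n + x^n X^n \ge 0$ and $P(x^n + x^n X^n_T \ge 1) \ge P(X^n_T \ge c_n) \ge \alpha$ for all $n$, so $\limsup_n P(x^n + x^n X^n_T \ge 1) \ge \alpha > 0$, contradicting NAA$_1$.

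For NUPBR $\Rightarrow$ NA$_1$, again contrapositively: suppose NA$_1$ fails, so there is $\xi \in L^0_+ \setminus \{0\}$ with $\bar x(\xi) = 0$. Then for every $n$ there is $x_n \in (0, 1/n]$ and $X^n \in \cX^{x_n}$ with $x_n + X^n_T \ge \xi$. Writing $X^n = x_n \tilde X^n$ with $\tilde X^n \in \cX^1$, and further noting $(1/2)\tilde X^n \in \cX^1_>$ (so it suffices to bound $\cX^1_T$), I would observe that $1 + \tilde X^n_T \ge \xi / x_n \ge n \xi$; since $P(\xi > 0) > 0$, choosing a level $\delta > 0$ with $P(\xi \ge \delta) =: \beta > 0$ gives $P(\tilde X^n_T \ge n\delta - 1) \ge \beta$ for all $n$, so $\cX^1_T$ is unbounded in probability, i.e.\ NUPBR fails.

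For NA$_1$ $\Rightarrow$ NAA$_1$ I would prove the contrapositive: if NAA$_1$ fails, there are $x^n \downarrow 0$, processes $X^n \in \cX$ with $x^n + X^n \ge 0$, and $\alpha > 0$ with $P(x^n + X^n_T \ge 1) \ge \alpha$ along a subsequence. The delicate point—and the main obstacle—is to manufacture from these a single nonzero claim $\xi$ with $\bar x(\xi) = 0$. I would pass to a subsequence so that $\sum_n x^n < \infty$, and use convexity of the wealth-process cone: the processes $Y^n := x^n + X^n \ge 0$ are $x^n$-admissible, and for any $N$ the convex combination (or rather the sum, rescaled) of finitely many of them lies in a suitable $\cX^{x}$ with $x = \sum_{n \le N} x^n$ small. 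A cleaner route is to invoke a Komlós-type / forward-convex-combinations argument on the sequence $(Y^n_T)$: since each $Y^n_T \ge 0$ and $P(Y^n_T \ge 1) \ge \alpha$, one extracts convex combinations $Z^k \in \mathrm{conv}(Y^k_T, Y^{k+1}_T, \dots)$ converging a.s.\ to some $\xi \in L^0_+$; the bound $P(Y^n_T \ge 1) \ge \alpha$ propagates (via Fatou on the events, plus a standard argument) to give $P(\xi > 0) > 0$, while the corresponding convex combinations of the processes, being $x$-admissible for arbitrarily small $x = \sum x^n$ over the tail, superreplicate $\xi$ from vanishing initial capital—hence $\bar x(\xi) = 0$, contradicting NA$_1$. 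I expect the care needed to (a) ensure the limit $\xi$ is genuinely nonzero and (b) control the initial capital of the limiting superreplicating strategy simultaneously to be the crux; everything else is bookkeeping with the scaling relations $\cX^x = x \cX^1$ and the elementary inclusion $\cX^{1/2}_T \subseteq \{X_T : X \in \cX^1_>\}$.
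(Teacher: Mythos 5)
Your proofs of NAA$_1 \Rightarrow$ NUPBR and NUPBR $\Rightarrow$ NA$_1$ are correct and coincide with the paper's (contrapositive plus the scaling $\cX^x = x\cX^1$). For NA$_1 \Rightarrow$ NAA$_1$ you have the right overall strategy: the paper likewise passes to a subsequence, forms Ces\`aro-type convex combinations of the $\xi^n := x^n + X^n_T$ via the von Weizs\"acker theorem (which is exactly the Koml\'os-type result without $L^1$-boundedness that you invoke), shows the a.s.\ limit $\xi$ is nonzero, and uses Egorov to produce a nonzero claim superreplicated from vanishing capital. The difficulty is that the step you describe as ``the bound $P(Y^n_T\geq 1)\geq\alpha$ propagates via Fatou on the events'' is the real crux, and the suggested mechanism does not work. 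Fatou for events only yields $P(\limsup_n\{Y^n_T\geq 1\})\geq\alpha$, but $\omega\in\limsup_n\{Y^n_T\geq 1\}$ says nothing about the size of a convex combination $\sum_k \lambda_k Y^{n_k}_T(\omega)$: the weights $\lambda_k$ supplied by the Koml\'os/von Weizs\"acker extraction may place negligible mass on precisely the indices where $Y^n_T(\omega)\geq 1$, so the combination can be small. One needs a quantitative estimate on the averaged variable. The paper proves $P(\bar\xi^n\geq\e)\geq\e/(1-\e)$ and hence $E(\xi\wedge 1)\geq\e^2/(1-\e)>0$; a shorter route, which you could substitute, is to note that $x\mapsto x\wedge 1$ is concave on $[0,\infty)$, so $E(\bar\xi^n\wedge 1)\geq(1/n)\sum_{i\leq n}E(\xi^i\wedge 1)\geq 2\e$ and bounded convergence gives $E(\xi\wedge 1)\geq 2\e>0$. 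Some such estimate is indispensable. You should also spell out the piece you relegate to ``bookkeeping'': a.s.\ convergence $Z^k\to\xi$ with $\xi\neq 0$ does not by itself give a single nonzero claim with zero superreplication price; one applies Egorov to find $\Gamma\subseteq\{\xi\geq a\}$ of positive probability on which the Ces\`aro wealth eventually exceeds $a$, so that $a I_\Gamma$ is the claim that violates NA$_1$.
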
 

\begin{proof}
NAA$_1 \ \Rightarrow\  $NUPBR: If $\{X_T:\ X\in \cX^1_>\}$ is not $P$-bounded,   $P(1+\tilde X^n_T\ge n)\ge \e>0$ holds for a sequence of $\tilde X^n\in \cX^1_>$, and we obtain a violation of NAA$_1$ with  $n^{-1}+n^{-1}\tilde X^n_T$.

\smallskip

\noindent NUPBR$\ \Rightarrow\  $ NA$_1$: If NA$_1$ fails,  there exist $\xi\in L^0_+\setminus \{0\}$ and a sequence $X^n\in \cX^{1/n}$ such that $1/n+X^n\ge \xi$. Then, the sequence $nX^n_T\in \cX^1$ fails to be $P$- bounded, in violation of NUPBR. 

\smallskip

\noindent NA$_1\ \Rightarrow\  $ NAA$_1$: If the implication fails, then there are sequences $x^n\downarrow 0$ and 
$X^n\ge -x^n$ such that $P(x^n+X^n_T\ge 1)\ge 2\e>0$. By the von Weizs\"acker theorem (see \cite{von W} or \cite[5.2.3]{KSaf}), any sequence of random variables bounded from below contains a subsequence converging in Cesaro sense a.s. as well as its all further subsequences. We may assume without loss of generality that already for $\xi^n:=x^n+X^n_T$ the sequence $\bar \xi^n:=(1/n)\sum_{i=1}^n\xi_i$ converges $\xi \in L^0_+$.   Note that  $\xi\neq 0$. Indeed, 
\bean
\e(1-P(\bar \xi^n\ge \e)) &\ge&       \frac 1n\sum_{i=1}^nE\xi^iI_{\{\bar \xi^n<\e\}}\ge \frac 1n\sum_{i=1}^nE\xi^iI_{\{\xi^i\ge 1,\;\bar \xi^n<\e\}}\\
&\ge& \frac 1n\sum_{i=1}^nP(\xi^i\ge 1,\;\bar \xi^n<\e)\ge \frac 1n\sum_{i=1}^n(P(\xi^i\ge 1)-P(\bar \xi^n\ge \e))\\
&\ge &2\e-P(\bar \xi^n\ge \e). 
\eean
It follows that $P(\bar \xi^n\ge \e))\ge \e/(1-\e)$. Thus, 
$$
E(\xi\wedge 1)=\lim_n E(\bar\xi^n\wedge 1)\ge \e^2/(1-\e)>0.  
$$ 
It follows that there exists $a>0$ such that $P(\xi\ge 2a)>0$. 
In view of  Egorov's theorem, one can find a measurable  set $\Gamma \subseteq \{\xi \ge a\}$ with $P(\Gamma)>0$ on which $x^n+X^n\ge a$ holds for all sufficiently large $n$. But this means that the random variable $aI_{\Gamma}\neq 0$  can be super-replicated starting with arbitrary small initial capital, in contradiction with the assumed condition NA$_1$.
\end{proof}

\begin{rem}[On terminology and bibliography] 
Conditions NAA$_1$ and NA$_1$ have  clear financial meanings, while $P$-boundedness of  the set  $\cX^1_T$ at first glance looks as a technical condition---see \cite{DS}.
The concept of NAA$_1$ first appeared in \cite{Kab-Kram:94} in a much more general context of large financial markets, along with another fundamental  notion NAA$_2$ (No Asymptotic Arbitrage of the 2nd Kind). The $P$-boundedness of $\cX^1_T$ was discussed in  \cite{K97} (as the BK-property), in the framework of a model given by value processes; however, it was overlooked that it actually coincides with NAA$_1$ for the ``stationary'' model. This condition appeared under the acronym NUPBR  in \cite{Kara-Kard}, and was shown  to be equivalent to NA$_1$ in \cite{Kard10}.  
\end{rem}

\subsection{NA$_1$ in terms of conditions NA and NFLVR}
%\label{nanaa1}

Remaining in the framework of the abstract model of the previous subsection, we provide here results on the relation of condition NA$_1$ with other fundamental notions of the arbitrage theory, cf. with \cite{K97}.  

Define the convex sets  
$C:=(\cX_T-L^0_+)\cap L^{\infty}$ and
denote by $\bar C$, $\tilde C^*$, and $\bar C^*$ respectively
the norm closure, the  sequential  weak$^*$ closure, and weak$^*$ closure of $C$ in $L^{\infty}$. Conditions NA, NFLVR, NFLBR, and NFL are respectively  defined via
$$
C\cap L^{\infty}_+=\{0\},\qquad 
\bar C\cap L^{\infty}_+=\{0\}, \qquad\tilde C^{*}\cap L^{\infty}_+=\{0\},
\qquad\bar C^*\cap L^{\infty}_+=\{0\}.
$$
Consecutive inclusions induce the hierarchy of these
properties:
$$
\begin{array}{ccccccc}
C& \subseteq &\bar C&\subseteq & \tilde C^{*}&\subseteq& \bar C^{*} \\
NA &\Leftarrow & NFLVR & \Leftarrow &
NFLBR & \Leftarrow & NFL.
\end{array}
$$

\begin{lemm}  NFLVR $\ \ \Rightarrow \ \ $ NA $\ \&\ $ NA$_1$. 
\end{lemm}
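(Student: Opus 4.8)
The plan is to prove each implication separately, since NFLVR $\Rightarrow$ NA is already recorded in the displayed hierarchy (it follows from $C\subseteq\bar C$), so the real content is NFLVR $\Rightarrow$ NA$_1$. I would argue by contrapositive: assume NA$_1$ fails and produce a free lunch with vanishing risk. By Lemma \ref{A4}, failure of NA$_1$ is equivalent to failure of NUPBR, i.e. the set $\{X_T:\ X\in\cX^1_>\}$ (equivalently $\cX^1_T$) is not bounded in $L^0$. Hence there exist $\alpha>0$ and a sequence $X^n\in\cX^1$ with $P(X^n_T\ge n)\ge\alpha$ for all $n$.

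The key step is to convert this unbounded-in-probability family of $1$-admissible payoffs into a sequence of $L^\infty$ elements of $C$ that approach $L^\infty_+\setminus\{0\}$ in norm. Consider $Y^n:=(1/n)X^n$; then $Y^n\ge -1/n$, so $Y^n+1/n\ge 0$, and $Y^n_T\wedge 1\in\cX^{1/n}_T\wedge 1$. More precisely, since the wealth-process family is stable under the operations used in the abstract framework, the bounded payoff $\xi^n:=(Y^n_T)\wedge 1=(X^n_T/n)\wedge 1$ satisfies $\xi^n\ge -1/n$ and $\xi^n\le(X^n_T/n)$, so $\xi^n-(\text{something in }L^0_+)$ lies in $\cX^{1/n}_T-L^0_+\subseteq\cX_T-L^0_+$; intersecting with $L^\infty$ gives $\xi^n\in C$ after subtracting the negative part, i.e. $g^n:=\xi^n+(1/n)\in(\cX_T-L^0_+)\cap L^\infty_+\!\!-(1/n)$. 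The cleanest route: set $g^n:=(X^n_T/n)\wedge 1\in C$ (it is dominated by an element of $\cX_T$ and is bounded), with $g^n\ge -1/n$; then $\|(g^n)^-\|_\infty\le 1/n\to 0$, so the "risk" vanishes, while $P(g^n=1)=P(X^n_T\ge n)\ge\alpha$, so the "profit" does not. Thus $(g^n)^+\in\bar C$ by norm-closedness after pushing the vanishing negative part through, and any $L^\infty$-weak$^*$ or norm limit point of $(g^n)^+$ is a nonzero element of $L^\infty_+$, contradicting $\bar C\cap L^\infty_+=\{0\}$; one extracts such a limit point using that $0\le(g^n)^+\le 1$ together with the fact that $E[(g^n)^+]\ge\alpha$, so the limit is not a.s.\ zero.

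The main obstacle I anticipate is purely bookkeeping: making precise, in the abstract setting of the appendix where $\cX^1$ is only assumed convex with $0\in\cX^1$ and $\cX=\R_+\cX^1$, that the truncated and rescaled payoffs genuinely land in $C=(\cX_T-L^0_+)\cap L^\infty$ and that the negative-part norms go to zero — the scaling $\cX^x=x\cX^1$ and the inclusion $\cX^x\subseteq\cX^1$ for $x\in[0,1]$ are exactly what is needed, so this should go through routinely. The only genuinely non-trivial point is producing a \emph{nonzero} limit in $L^\infty_+$: here one uses that the family $\{(g^n)^+\}$ is bounded in $L^\infty$ (hence weak$^*$ relatively compact) with uniformly bounded-below expectation, so any weak$^*$ cluster point $h$ satisfies $E[h]\ge\alpha>0$ and $h\ge 0$, giving $h\in\bar C^*\cap L^\infty_+\setminus\{0\}$; since $\bar C\subseteq\bar C^*$ and NFLVR already gives $\bar C\cap L^\infty_+=\{0\}$, a short additional argument (or simply invoking the stronger statement $\bar C^*\cap L^\infty_+=\{0\}$ is \emph{not} available under NFLVR alone) forces us instead to arrange the $g^n$ so that they converge in \emph{norm}: replace $g^n$ by $h^n:=g^n\wedge g^m$-type diagonalizations, or more simply note $(g^n)^+\to$ some $h$ along a subsequence in $\sigma(L^\infty,L^1)$ and then use that $\bar C$ is convex and norm-closed to place a suitable convex combination — by Mazur's lemma — into $\bar C$, with the convex combinations still having expectation $\ge\alpha$ and still being $[0,1]$-valued. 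This Mazur-lemma step is the crux and is where care is required; everything else is elementary.
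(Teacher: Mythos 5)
There is a genuine gap at the step you yourself flag as the crux. Your sequence $g^n := (X^n_T/n)\wedge 1$ is correctly constructed and does lie in $C$, with $\|(g^n)^-\|_\infty\le 1/n\to 0$ and $E[(g^n)^+]\ge \alpha$. But the compactness you invoke is compactness in the weak-$*$ topology $\sigma(L^\infty,L^1)$, and Mazur's lemma applies to \emph{weak} convergence $\sigma(E,E^*)$, not weak-$*$. Convex combinations of a weak-$*$ convergent sequence need not converge in norm; in fact, the norm closure of a convex set coincides with its \emph{weak} closure but can be strictly smaller than its weak-$*$ closure. This is precisely the distinction between $\bar C$ and $\bar C^*$ in the paper's own hierarchy, i.e.\ between NFLVR and NFL. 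So what your argument actually produces is a nonzero element of $\bar C^*\cap L^\infty_+$ (a violation of NFL), not of $\bar C\cap L^\infty_+$ (a violation of NFLVR); since NFL is strictly stronger than NFLVR, this does not prove the lemma. The alternative of passing to weak $L^1$ convergence and applying Mazur there only gives $L^1$-norm convergence of convex combinations, again not the required $L^\infty$-norm convergence.

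The difficulty arises because you routed the argument through NUPBR, which gives an unbounded family without a fixed target. The paper instead uses the superreplication form of NA$_1$ directly: if NA$_1$ fails, there is a fixed $[0,1]$-valued $\xi\in L^0_+\setminus\{0\}$ and $X^n\in\cX^{1/n}$ with $1/n+X^n_T\ge\xi$; then $X^n_T\wedge\xi\in C$ satisfies $\xi-1/n\le X^n_T\wedge\xi\le\xi$, hence converges to $\xi$ in $L^\infty$-norm, and $\xi\in\bar C\cap L^\infty_+\setminus\{0\}$ contradicts NFLVR with no compactness extraction at all. You can repair your proof by abandoning the NUPBR reformulation and working with the superreplication definition of NA$_1$ to obtain this fixed target $\xi$ at the outset.
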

\begin{proof}
Assume NFLVR. Condition NA follows trivially. If $NA_1$ fails, 
then there exists $[0,1]$-valued $\xi \in L^0_+ \setminus \{0 \}$ such that for each $n\ge 1$ one can find $X^n\in\cX^{1/n}$ with $1/n+X^n_T\ge \xi$. Then 
the random variables $X^n_T\wedge \xi$ belong to $C$ and converge uniformly to $\xi$, contradicting NFLVR. 
\end{proof}

For the sequel we need some further assumptions. We shall call a model \emph{natural} if the elements of $\cX$ are adapted processes and for any $X\in \cX$, $s\in [0,T[$, and $\Gamma\in \cF_s$ the process $\tilde X:=I_{\Gamma\cap \{X_s\le 0\}}I_{[s,T]}(X-X_s)$ is an element of $\cX$. In words, a model is natural if an investor deciding to start trading  at time $s$ when the event $\Gamma$ happened, can use from this time, if $X_s\le 0$,  the investment strategy that leads to the value process with the same increments as $X$.

\begin{lemm} \label{lem:NA_natural}
Suppose that the model is natural. Let $X\in \cX$. If NA holds, then $X\ge - \|X_T^-\|_{\infty}$.
\end{lemm}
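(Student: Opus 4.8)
The plan is to argue by contradiction, extracting from a would-be violation an arbitrage opportunity (an element of $C$ that is nonnegative and nontrivial), using the ``natural model'' hypothesis to legitimately restart the strategy $X$ at the bad time. Suppose $X \in \cX$ but the conclusion fails, so that the event $\Lambda := \{\inf_{t \in [0,T]} X_t < -\|X_T^-\|_\infty\}$ has positive probability. Since $X$ is c\`adl\`ag and adapted, the process $\inf_{t \le s} X_t$ is adapted, and for suitable rational $\delta > 0$ the event $\{\inf_{t \le s} X_t < -\|X_T^-\|_\infty - \delta \text{ for some } s\}$ still has positive probability; I would then locate a deterministic time $s \in [0,T)$ and a set $\Gamma \in \cF_s$ with $P(\Gamma) > 0$ on which $X_s < -\|X_T^-\|_\infty - \delta$ (for instance by discretising $[0,T]$ along a fine grid and using right-continuity to pass from the random hitting time to a fixed grid point, at the cost of shrinking $\delta$ slightly). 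On $\Gamma$ we in particular have $X_s \le 0$.

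Next I would invoke naturalness with this $s$, this $\Gamma$, and this $X$: the process $\tilde X := I_{\Gamma \cap \{X_s \le 0\}} I_{[s,T]} (X - X_s)$ lies in $\cX$. Since $\Gamma \subseteq \{X_s \le 0\}$ here, on $\Gamma$ we have $\tilde X_T = X_T - X_s > X_T + \|X_T^-\|_\infty + \delta \ge \delta > 0$ (using $X_T \ge -X_T^- \ge -\|X_T^-\|_\infty$), while off $\Gamma$ we have $\tilde X_T = 0$. Thus $\tilde X_T \ge 0$ everywhere and $\tilde X_T \ge \delta I_\Gamma$, so $\tilde X_T$ is a nonzero element of $\cX_T \cap L^\infty_+$; a fortiori $\tilde X_T \in C \cap L^\infty_+$ and $\tilde X_T \ne 0$, contradicting NA. Hence $\Lambda$ is $P$-null, i.e. $X \ge -\|X_T^-\|_\infty$ (modulo evanescence).

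I expect the only genuine obstacle to be the measurable-selection / discretisation step: passing from ``the random infimum of $X$ undershoots $-\|X_T^-\|_\infty$ on a set of positive probability'' to ``there is a \emph{fixed} time $s$ and a set $\Gamma \in \cF_s$ on which $X_s$ is already strictly below $-\|X_T^-\|_\infty$.'' The naturalness axiom as stated only allows restarting at a deterministic $s$, not at a stopping time, so one cannot simply take $\tau$ to be the first hitting time of $(-\infty, -\|X_T^-\|_\infty)$. The fix is routine but needs care: approximate $\tau$ from the right by the dyadic grid times $s_k = \lceil 2^n \tau \rceil / 2^n$, use right-continuity of $X$ to get $X_{s_k} < -\|X_T^-\|_\infty$ on $\{\tau < T\}$ for $n$ large (after trimming $\delta$), and then by the pigeonhole principle one of the finitely many grid points $s$ carries a positive-probability slice $\Gamma = \{s_k = s\} \cap \{X_s < -\|X_T^-\|_\infty\} \in \cF_s$. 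Everything else is a direct application of the definitions of $\cX$, $C$, and NA.
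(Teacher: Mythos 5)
Your argument is correct and rests on the same essential idea as the paper's: use the naturalness axiom to restart the strategy at a deterministic time $s$ on the event $\Gamma=\{X_s<-\lambda\}$ (where $\lambda:=\|X_T^-\|_\infty$), and observe that the restarted wealth $\tilde X_T$ is nonnegative and nontrivial, contradicting NA. The paper, however, avoids your detour through $\inf_t X_t$, the hitting time $\tau$, and the dyadic discretization: it simply shows that \emph{for each fixed} $s\in[0,T)$ one has $P(X_s<-\lambda)=0$ under NA (by exactly your restart argument with $\Gamma=\{X_s<-\lambda\}$), and then the assertion $X\ge -\lambda$ modulo evanescence follows from right-continuity of the paths and countability of $\mathbb Q\cap[0,T)$, without ever invoking a stopping time. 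Your discretization step is valid but is doing work that the direct pointwise argument renders unnecessary. One small imprecision: $\tilde X_T=(X_T-X_s)I_\Gamma$ need not be bounded, so it is not literally ``a nonzero element of $C\cap L^\infty_+$''; one should pass to $\tilde X_T\wedge 1=\tilde X_T-(\tilde X_T-1)^+\in(\cX_T-L^0_+)\cap L^\infty_+=C\cap L^\infty_+$, which is still nonzero, before invoking NA.
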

\begin{proof}
Let $\lambda := \|X_T^-\|_{\infty}$. If $P(X_s<-\lambda)>0$, then the process $\tilde X:=I_{\{X_s<-\lambda\}}I_{[s,T]}(X-X_s)$ belongs to $\cX$, the random variable $\tilde X_T\ge 0$ and $P(\tilde X_T>0)>0$ in violation of NA.
\end{proof}

\begin{prop}  
Suppose that the model is natural, and, additionally, for every $n\ge 1$ and $X\in \cX$ with $x\ge -n^{-1}$ the process $n X\in \cX^1$. Then, NFLVR $\ \ \Leftrightarrow \ \ $ NA $\ \&\ $ NA$_1$.
\end{prop}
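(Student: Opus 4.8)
The plan is to prove the equivalence NFLVR $\Leftrightarrow$ NA $\&$ NA$_1$ for a natural model satisfying the extra scaling hypothesis. One direction, NFLVR $\Rightarrow$ NA $\&$ NA$_1$, is already available from the preceding lemma, so the entire content of the proposition lies in the converse: assuming NA and NA$_1$, derive NFLVR, i.e., $\bar C \cap L^\infty_+ = \{0\}$. So let $\xi \in \bar C \cap L^\infty_+$; there exist $X^n \in \cX$ and $r^n \in L^0_+$ with $\|(X^n_T - r^n) - \xi\|_\infty \to 0$, and I may normalise so that $\|X^n_T - r^n - \xi\|_\infty \le 1/n$; in particular $X^n_T \ge \xi + r^n - 1/n \ge -1/n$.

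The key step is to upgrade this terminal lower bound to a uniform-in-time lower bound, and this is where the hypotheses enter. Since the model is natural and NA holds, Lemma \ref{lem:NA_natural} gives $X^n \ge -\|(X^n_T)^-\|_\infty \ge -1/n$ (using $X^n_T \ge -1/n$). Now invoke the scaling hypothesis: because $X^n \ge -1/n$, the process $n X^n$ lies in $\cX^1$. Hence $\{n X^n_T : n \ge 1\} \subseteq \cX^1_T$, which is $P$-bounded by condition NA$_1$ (equivalently NUPBR, via Lemma \ref{A4}). I then want to conclude that the candidate arbitrage $\xi$ must vanish.

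To finish, I would argue by contradiction: suppose $P(\xi \ge 2a) > 0$ for some $a > 0$. From $X^n_T \ge \xi + r^n - 1/n \ge \xi - 1/n$ we get $n X^n_T \ge n\xi - 1$, so on $\{\xi \ge 2a\}$ we have $n X^n_T \ge 2na - 1 \to \infty$, contradicting $P$-boundedness of $\{n X^n_T\}_n$. Therefore $\xi = 0$, establishing NFLVR. I expect the main obstacle — really the only nontrivial point — to be making sure the two structural hypotheses (naturalness and the scaling property) are deployed in the right order: naturalness plus NA is needed to pass from the terminal bound $X^n_T \ge -1/n$ to the pathwise bound $X^n \ge -1/n$ (so that $nX^n$ is genuinely $1$-admissible), and only then does NA$_1$ furnish the boundedness that kills the candidate arbitrage. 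A minor technical care is the normalisation of the approximating sequence and handling the slack $r^n \ge 0$, but since we only use $X^n_T \ge \xi - 1/n$ this is routine.
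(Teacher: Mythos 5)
Your proof is correct and takes essentially the same route as the paper: naturalness plus NA (via Lemma \ref{lem:NA_natural}) upgrades the terminal bound $X^n_T\ge -1/n$ to a pathwise bound, the scaling hypothesis then puts $nX^n$ in $\cX^1$, and NA$_1$ (equivalently NUPBR, Lemma \ref{A4}) yields the contradiction. The only difference is minor and in your favour: the paper invokes the von Weizs\"acker theorem to pass to an a.s.\ convergent sequence $\eta_n$ before concluding, whereas you conclude directly from the uniform bound $X^n_T\ge \xi-1/n$ on the set $\{\xi\ge 2a\}$, which slightly streamlines the final step.
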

\begin{proof}
We only have to show implication $\Leftarrow$.
If NFLVR fails,
there are  $\xi_n\in C$ and $\xi\in L^\infty_+\setminus\{0\}$ such that $\|\xi_n-\xi \|_\infty \le n^{-1}$.
By definition, $\xi _n\le
\eta _n=X^n_T$ where
$X^n\in \cX$. Obviously, $\|\eta _n^-\|_{\infty}\le n^{-1}$ and,
since NA holds, $nX^n\in \cX^1$ in virtue of Lemma \ref{lem:NA_natural} and our hypothesis. By the von Weizs\"acker theorem, we may
assume that $\eta_n\to \eta$ a.s.
Since $P(\eta>0)>0$, the sequence $nX^n_T\in \cX^1_T$ tends
to infinity with strictly positive probability,
violating condition NUPBR, or, equivalently, NA$_1$.
\end{proof}

Examples showing that conditions NFLVR, NA, and NA$_1$ are all different can be found in  \cite{Her-Her}.

\smallskip

Assume now that $\cX^1$ is a subset of the space of semimartingales $\cS$, equipped with the Emery topology given by the quasinorm 
$$
{\bf D}(X):=\sup \{E1\wedge|H\cdot X_T|:\
H\ \hbox{is predictable, }\ |H|\le 1\}.
$$

Define the condition ESM as the existence of
$\tilde P\sim P$ such that $\tilde EX_T\le 0$ for all processes $X\in \cX$.  A probability $\tilde P$ with such property is referred to as \emph{equivalent separating measure}. According to the Kreps--Yan separation theorem, conditions NFL and ESM are equivalent. The next result under the specific assumptions is given in \cite{K97}, heavily using previous work, especially of \cite{DS, DS:98}.

\begin{theo}
\label{FTAP1} Suppose that $\cX^1$ is closed in $\cS$, and that the following concatenation property holds: for any $X,X'\in \cX^1$ and any bounded predictable processes $H,G\ge 0$ such that
$HG=0$  the process  $\tilde X:=H\cdot X+G\cdot X'$ belongs to $\cX^1$ if it satisfies the inequality $\tilde X\ge -1$.

Then, under condition NFLVR it holds that $C=\bar C^{*}$ and, as a corollary, we have
$$
NFLVR \Leftrightarrow NFLBR \Leftrightarrow NFL  \Leftrightarrow ESM.
$$
\end{theo}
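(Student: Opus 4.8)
The plan is to reduce everything to the single assertion that, under NFLVR, $C=\bar C^{*}$ --- once this is available, the displayed chain of equivalences is essentially immediate. From the inclusions $C\subseteq\bar C\subseteq\tilde C^{*}\subseteq\bar C^{*}$ one already has NFL $\Rightarrow$ NFLBR $\Rightarrow$ NFLVR, and NFL $\Leftrightarrow$ ESM is the Kreps--Yan theorem; so it will be enough to observe that NFLVR (which forces NA, i.e.\ $C\cap L^\infty_+=\{0\}$) together with $\bar C^{*}=C$ gives $\bar C^{*}\cap L^\infty_+=\{0\}$, which is exactly NFL. That closes the loop NFLVR $\Rightarrow$ NFL $\Rightarrow$ NFLBR $\Rightarrow$ NFLVR, and all four conditions, plus ESM, coincide.

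To obtain $C=\bar C^{*}$ I would show that $C$ is weak$^{*}$-closed in $L^\infty$. By the Krein--Smulian theorem and the fact that $C$ is a cone, it suffices to check that $C\cap\{f:\|f\|_\infty\le 1\}$ is weak$^{*}$-closed; this set is norm bounded, hence weak$^{*}$-metrizable, and a weak$^{*}$-convergent sequence in it admits, via Komlós / forward convex combinations, a sequence of convex combinations converging a.s.\ to the same limit. Since $C$ is convex, everything therefore comes down to the \emph{Fatou-closure} of $C$: whenever $f_n\in C$ with $-1\le f_n$ uniformly bounded and $f_n\to f$ a.s., then $f\in C$.

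So I would fix such $(f_n)$ and choose $X^n\in\cX$ with $f_n\le X^n_T$. The first step is to note that the admissibility constants need no separate control: the concatenation hypothesis makes the model natural, and since NFLVR gives NA, Lemma \ref{lem:NA_natural} yields $X^n\ge-\|(X^n_T)^-\|_{\infty}\ge-1$, so in fact $X^n\in\cX^1$ for every $n$. Next, NFLVR implies NUPBR (Lemma \ref{A4}), so $(X^n_T)_n$ is bounded in probability. By convexity of $\cX^1$ I would pass to forward convex combinations $Y^n\in\mathrm{conv}(X^n,X^{n+1},\dots)\subseteq\cX^1$ chosen so that $Y^n_T\to g$ a.s.; since the same combinations of the $X^m_T$ dominate the corresponding combinations of the $f_m$ and $f_m\to f$ a.s., one gets $g\ge f$. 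The final step is to produce $X\in\cX^1$ with $X_T\ge g$: stopping the $Y^n$ at suitable stopping times to tame their behaviour near $T$ and, if needed, taking a further sequence of forward convex combinations, one arrives at a sequence converging in the Emery topology of $\cS$; its limit $X$ lies in $\cX^1$ because $\cX^1$ is closed in $\cS$, and $X_T\ge g\ge f$. Hence $f\in(\cX^1_T-L^0_+)\cap L^\infty\subseteq C$, establishing Fatou-closure and therefore $C=\bar C^{*}$.

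The main obstacle is precisely the last step just described: extracting from $(Y^n)$ a (stopped, further-averaged) sequence that genuinely converges in the Emery topology, so that the $\cS$-closedness of $\cX^1$ can be invoked to place the limit in $\cX^1$. This is the hard core of the Delbaen--Schachermayer Fundamental Theorem of Asset Pricing, and I would not reprove it: I would cite the constructions of \cite{DS} (with the streamlined presentation of \cite{K97}, and \cite{DS:98} for the non-locally-bounded case), noting that the concatenation property and the closedness of $\cX^1$ in $\cS$ are exactly the two ingredients those arguments require. Everything else --- Krein--Smulian, the Komlós argument, the reduction to $\cX^1$ via Lemma \ref{lem:NA_natural}, and boundedness in probability from Lemma \ref{A4} --- is routine bookkeeping.
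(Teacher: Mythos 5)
Your sketch is correct and matches the paper's approach, which for Theorem~\ref{FTAP1} is simply to cite \cite{K97} (itself building on \cite{DS,DS:98}) rather than reprove it. The surrounding logic---deducing the four-way equivalence from $C=\bar C^{*}$ together with NA, reducing $C=\bar C^{*}$ to Fatou-closure via Krein--Smulian, normalizing to $\cX^1$ by noting that the concatenation hypothesis makes the model natural so Lemma~\ref{lem:NA_natural} applies, invoking Lemma~\ref{A4} for boundedness in probability, and then deferring the Emery-convergence core to \cite{DS,K97}---is sound and is exactly the content of the cited argument.
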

%\begin{rem}
%Suppose that $\cX$ has the concatenation property and  $\tilde P$ is  a separating measure. In such a case, if  $X$ is bounded and $H=\pm\xi I_{]s,t]}$ where 
%$\xi\in L^\infty_+(\cF_s)$ then $H\cdot X\in \cX$. It follows that the process $X$ is a $\tilde P$-martingale. 
%If $X$ is locally bounded then $X$ is a local $\tilde P$-martingale. Applying the Fatou lemma we get from here that all locally bounded elements of $\cX^1$ are  $\tilde P$-supermartingales. 
%\end{rem}

\subsection{Deflators}

In accordance to the analogous definition in the main text for the ``standard'' model, a process $Z>0$ is a strictly positive \emph{supermartingale deflator} (resp., \emph{local martingale deflator}) if $Z(1+X)$ is a supermartingale (resp., a local martingale) for each $X\in \cX^1$. Since Fatou's lemma implies that local martingales bounded from below are supermartingales, local martingale deflators are automatically supermartingale deflators. 

Existence of  a supermartingale deflator $Z$ implies NA$_1$. Indeed, $EZ_T(1+X_T)\le 1$ holds for every $X\in \cX^1$.  Since boundedness in $L^1$ implies boundedness in probability (due to Chebyshev's inequality), the set  $\{Z_T(1+X_T): \ X\in \cX^1\}$ is $P$-bounded, and so is the set $\{1+X_T: \ X\in \cX^1\}$.   

Again, in accordance to the definition for the ``standard'' model,
a wealth process $V \in 1+\cX^1_>$ is a \emph{supermartingale num\'eraire} (resp., \emph{local martingale num\'eraire})  if $1/V$ is a strictly positive supermartingale deflator (resp., local martingale deflator). The supermartingale num\'eraire (as well as the local martingale num\'eraire) if it exists,  is unique. Indeed, the only case where both  a strictly positive processes starting from unit initial value and its reciprocal are supermartingales is when this process is identically equal to one.  To see this, note that the  function $x\mapsto x^{-1}$ is strictly convex and decreasing on $(0,\infty)$; therefore, for a supermartingale $R>0$ with $R_0=1$ it holds that $E R^{-1}_t \ge (E R_t)^{-1} \ge 1$, where the first inequality is equality only when the random variable  $R^{-1}_t$ is  equal to a constant (a.s.). Since $R^{-1}$ is also supermartingale, then $ER^{-1}_t\le 1$; this is consistent with the above only if $R \equiv 1$, a.s. 

\smallskip

The following result, which is a particular case of \cite[Theorem 1.7]{Kard13}, provides a criterion relating  NA$_1$ and the existence of a supermartingale deflator. 
\begin{theo}
Suppose that $1+\cX^1$ is fork-convex, i.e., for 
every $s\in [0,T]$, an $\cF_s$-measurable $[0,1]$-valued random variable $\alpha_s$, $X\in 1+\cX^1$ and $X' \in 1+\cX^1_{>}$, $X''\in 1+\cX^1_{>}$, the process
$$
XI_{[0,s)}+\big (\alpha_s (X_s/X'_s)X'_t  + (1-\alpha_s) (X_s/X''_s)X''_t\big)
I_{[s,T]}
$$ 
belongs to $1+\cX^1$. Then, condition NA$_1$ is equivalent to existence of the supermartingale num\'eraire. 
\end{theo}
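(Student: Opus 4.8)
The plan is to take the implication ``supermartingale num\'eraire exists $\Rightarrow$ NA$_1$'' essentially for free and to concentrate all the work on the converse. Indeed, if $V\in 1+\cX^1_>$ is a supermartingale num\'eraire then $1/V$ is, by definition, a strictly positive supermartingale deflator, and it has already been recorded above that existence of such a deflator forces $\{1+X_T:X\in\cX^1\}$ to be bounded in probability, which is NA$_1$ (equivalently NUPBR, by Lemma~\ref{A4}). So from now on assume NA$_1$, i.e.\ that the convex set $\cC:=1+\cX^1_{>,T}\subseteq L^0$ is bounded in probability; the aim is to produce a strictly positive process $V$ with $V_0=1$, of the form $V=1+\hat X$ with $\hat X\in\cX^1_>$, such that $(1+X)/V$ is a supermartingale for every $X\in\cX^1_>$ (and hence, via $1+(1-1/n)X\in 1+\cX^1_>$ and Fatou's lemma, for every $X\in\cX^1$), which makes $1/V$ a supermartingale deflator; uniqueness of the num\'eraire is the one already noted in the text.

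I would first do the \emph{static} part. Since $\cC$ is convex, contains the constant $1$ and is bounded in probability, the polar $\cC^\circ:=\{h\in L^0_+:\sup_{\eta\in\cC}E[\eta h]\le 1\}$ is convex, solid, closed in $L^0$, bounded in probability and of full support, so by a Brannath--Schachermayer type argument---maximise a bounded strictly increasing concave functional such as $E[h/(1+h)]$ over $\cC^\circ$, extracting an a.s.-convergent sequence of convex combinations \`a la Koml\'os/von Weizs\"acker, and use Fatou for upper semicontinuity---it contains a strictly positive element $h^*$. Then $\hat\xi:=1/h^*$ satisfies the terminal num\'eraire inequality $E[\eta/\hat\xi]\le 1$ for all $\eta\in\cC$. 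The delicate issue is \emph{attainability}: one must upgrade $\hat\xi$ to the terminal value of an actual process $1+\hat X\in 1+\cX^1_>$, and in fact to a process $V$ carrying the full supermartingale property. I would do this \emph{dynamically}, exploiting fork-convexity: solve the conditional one-period num\'eraire problem between nearby times---fork-convexity guarantees that the relevant restricted wealth-process families are again convex and bounded in probability, so a conditional num\'eraire exists on each small interval---and paste the conditional num\'eraires into a single process $V\in 1+\cX^1_>$, their uniqueness ensuring consistency of the pasting. This construction also sidesteps the fact that the global expected-log value over $\cC$ may equal $+\infty$, so that a naive one-shot log-maximisation need not work.

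The supermartingale property of $(1+X)/V$ is then a clean consequence of fork-convexity together with the terminal inequality. Fix $X\in\cX^1_>$ and $s<t$ in $[0,T]$. Applying fork-convexity at time $t$ with weight $0$ produces $W^{[t]}:=(1+X)I_{[0,t)}+\big((1+X_t)/V_t\big)V\,I_{[t,T]}\in 1+\cX^1_>$, a process that tracks $1+X$ up to $t$ and then ``freezes relative to $V$'', so $W^{[t]}_T=\big((1+X_t)/V_t\big)V_T$. Applying fork-convexity again at time $s$, splitting the wealth $V_s$ between $(V_s/W^{[t]}_s)W^{[t]}$ with an $\cF_s$-measurable weight $\alpha_s=\e I_\Gamma$ (for $\Gamma\in\cF_s$ and a constant $\e\in(0,1)$) and a rescaled copy of $V$ with the complementary weight, produces a process in $1+\cX^1_>$ whose terminal value, divided by $V_T$, equals $\e I_\Gamma\,\tfrac{V_s}{1+X_s}\cdot\tfrac{1+X_t}{V_t}+(1-\e I_\Gamma)$. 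The num\'eraire inequality applied to this process then collapses, after cancelling $\e$ and letting $\Gamma$ range over $\cF_s$, to $E[(1+X_t)/V_t\mid\cF_s]\le (1+X_s)/V_s$; taking $s=0$ and $\Gamma=\Omega$ gives in addition $E[(1+X_t)/V_t]\le 1<\infty$. Hence $(1+X)/V$ is a nonnegative supermartingale, $1/V$ is a strictly positive supermartingale deflator, and $V$ is the supermartingale num\'eraire.

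I expect the main obstacle to be the second step of the static part: the passage from the abstract num\'eraire \emph{element} $\hat\xi\in L^0$ to a num\'eraire \emph{process} $V\in 1+\cX^1_>$. Existence of $\hat\xi$ is soft---it uses only convexity and boundedness in probability of $\cC$---but its attainability and the consistent dynamic pasting of conditional num\'eraires is exactly where the fork-convexity hypothesis is indispensable: without it one can build a convex, bounded-in-probability set $\cX^1_T$ for which no num\'eraire lies even in the $L^0$-closure of $1+\cX^1_{>,T}$, so that no supermartingale num\'eraire exists. Making the localisation/pasting rigorous---choosing the intermediate times, controlling strict positivity in the limit, and preserving the num\'eraire inequality throughout---is the technical heart of the argument.
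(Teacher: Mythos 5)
You should be aware that the paper itself offers no proof of this theorem: it is quoted verbatim as a particular case of Theorem 1.7 of \cite{Kard13}. The only existence proof the paper actually carries out (Theorem \ref{kara-kard} in Section \ref{sdeflator}) concerns the standard semimartingale model and follows a completely different, ``local'' route: pointwise maximisation of the concave functional $\Psi$ built from the predictable characteristics, a measurable selection, and a separate verification of $S$-integrability of the selected integrand. Within your outline, the easy implication is fine, and your derivation of the supermartingale property of $(1+X)/V$ from fork-convexity together with the terminal inequality $E[\eta/V_T]\le 1$ for $\eta\in\{1+X_T:X\in\cX^1_>\}$ is correct --- the two successive forks at $t$ and then $s$, with weight $\varepsilon I_\Gamma$ and $\Gamma$ ranging over $\cF_s$, is exactly the standard argument, and it is legitimate because the forked processes are strictly positive with strictly positive left limits and hence lie in $1+\cX^1_>$.

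The genuine gap is the construction of $V$ itself. The static polar argument produces only a strictly positive $h^*$ with $\sup_\eta E[\eta h^*]\le 1$; this is the existence of a separating element, and \emph{any} element of the polar has this property --- it carries no information forcing $1/h^*$ to be the terminal value of a wealth process, whereas your dynamic argument needs $V\in 1+\cX^1_>$ because it forks off $V$ as the base process and as $X''$. The bridge you propose --- conditional one-period num\'eraires pasted over small intervals --- is not a workable plan as stated: in continuous time there are no one-period subproblems, so one must discretise and pass to a limit of wealth processes, which requires closedness of $1+\cX^1$ in a suitable (Fatou or Emery) topology; this is neither among the theorem's hypotheses nor established in your argument, and it is precisely the content of \cite{Kard13} (whence the title of that paper). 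Moreover, the ``uniqueness of conditional num\'eraires'' invoked to make the pasting consistent presupposes their existence and optimality on each subinterval, which is the same attainability problem in miniature. In short, you have correctly reduced the theorem to the step you yourself label its technical heart, but that step is missing; the paper's own Section \ref{sdeflator} avoids it altogether by constructing the integrand $g$ pointwise from the triplet $(b,c,K)$ and then proving integrability, rather than by abstract duality and pasting.
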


\subsection{Standard model}
For the ``standard'' model where wealth processes are of the form $X=H\cdot S$, where $S$ is a $d$-dimensional semimartingale and $H$ runs through the space of predictable processes $L(S)$ for which the stochastic integral is defined,  closedness in $\cS$ of $\cX^1$ follows from M\'emin's theorem in \cite{Memin}; therefore, Theorem \ref{FTAP1} applies. If $S$ is bounded (resp., locally bounded), it is straightforward to check that under a separating measure it is a martingale (resp., local martingale). Without any local boundedness assumption on $S$, we have the following result from \cite{DS:98}, a short proof of which is given in Appendix \ref{existsigma}:  
\emph{In any neighborhood (in total variation) of a separating measure 
there exists an equivalent probability measure under which $S$ is a $\sigma$-martingale.} It follows that, if NFLVR holds, the process $S$ is a $\sigma$-martingale with respect to some probability measure $P'\sim P$ with density process $Z'$. Therefore, for any process $X \equiv H\cdot S$ from $\cX^1$, the process $1+X$ is a local martingale with respect to $P'$, or equivalently,  $Z'(1+X)$ is a local martingale with respect to $P$; therefore, $Z'$ is a local martingale deflator.

%We apply the techniques developed in \cite{K97}
%to extend the following  one-dimensional result due to Kardaras, \cite{Kard}. 

\section{Existence of Equivalent $\sigma$-Martingale Measures}
\label{existsigma}

Let $S$ be a $d$-dimensional semimartingale written in canonical decomposition
$$
S=S_0+S^c+xI{\{|x|\le 1\}}*(\mu-\nu)+xI_{\{|x|> 1\}}*\mu +B^h,
$$
with triplet of predictable characteristics $B^h=b^h\cdot A$, $\langle S^c\rangle=c\cdot A$, $\nu(dt,dx)=dA_tK_t(dx)$ with $A_T\le 1$ (see Subsection \ref{subsec:canon_decomp} for details). Let 
$$\cX^1:=\{H\cdot S:\ H\cdot S\ge  -1,\ H\in L(S) \}.$$
A probability measure $P$ is called {\it separating measure}, if $E X_T \le 0$ for all $X \in \cX^1$. In particular, if all processes in $\cX^1$ are supermartingales with respect to $P$, then $P$ is a separating measure. 

Theorem  \ref{sig} below is due to Delbaen and Schachermayer \cite{DS:98}. The proof given here is borrowed, with some simplifications,  from \cite{K97}.  The argument  in Lemma \ref{Girsanov} is the same as in \cite{KL}; it allows us to avoid references to Hellinger processes used in \cite{K97}.  

\begin{theo}
\label{sig}
Suppose that $P$ is a separating measure. Then for any $\e>0$ there exists $\tilde P\sim P$ such that $|P-\tilde P|_{TV} \le \e$ and  $S$ is a $\sigma$-martingale with respect to $\tilde P$.  
\end{theo}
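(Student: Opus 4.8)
The plan is to realise $\tilde P$ through a Girsanov-type change of measure that annihilates the predictable ``drift'' of $S$, to obtain solvability of the associated pointwise drift equation from the separating property, to pass from pointwise solutions to predictable Girsanov parameters by a measurable selection, and to control the total-variation distance through a separate estimate, isolated as Lemma~\ref{Girsanov}.

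\textbf{Reformulation via characteristics.} First I would recall that $S$ is a $\sigma$-martingale under an equivalent probability $\tilde P$ exactly when its predictable drift --- understood in the $\sigma$-localized sense available for an arbitrary semimartingale --- vanishes $m$-a.e.\ under $\tilde P$. For a $P$-local martingale $N=\beta\cdot S^c+(W-1)*(\mu-\nu)$, with $W>0$ and $\beta$ predictable, Girsanov's theorem for semimartingales gives that, whenever $\cE(N)$ is a genuine density process, $S$ has $\tilde P$-L\'evy kernels $W K$ and $\tilde P$-drift $b+c\beta+K\big((W-1)x\big)$, where $\tilde P:=\cE(N)_T\,P$. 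The task therefore reduces to producing predictable $(\beta,W)$ with $W>0$ and $(W-1)x\in L^1(K)$ $m$-a.e.\ such that
\[
b+c\beta+K\big((W-1)x\big)=0 \qquad m\text{-a.e.},
\]
and such that $\cE(N)$ is a uniformly integrable martingale with $\cE(N)_T$ within $\e$ of $1$ in $L^1(P)$.

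\textbf{Pointwise solvability from the separating property.} Here I would fix a ``generic'' $(\omega,t)$ and treat the displayed identity as a deterministic equation in $(\beta,W)$. The set of attainable corrections $\{c\beta+K((W-1)x):\beta\in\R^d,\ W>0,\ (W-1)x\in L^1(K)\}$ is convex; if $-b$ failed to lie in its closure on a predictable set of positive $m$-measure, a Hahn--Banach separation, performed measurably in $(\omega,t)$, would yield a predictable direction $v$ with $cv=0$, $vx\le 0$ $K$-a.e., and $vb<K(vx)$. But then, by exactly the truncation-and-compensation argument used in the proof of Lemma~\ref{no_imm_arb}, the process $(-v)\cdot S$ (after the customary truncation of $v$ and of $K$) is nonnegative with $P((-v)\cdot S_T>0)>0$, giving an element of $\cX^1$ with strictly positive expectation and contradicting that $P$ is a separating measure. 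Thus the equation is solvable $m$-a.e.; this is the ``two-sided'' analogue of Lemma~\ref{no_imm_arb}, in which being a \emph{separating} measure forces the equality, and not merely an inequality. A $\cP$-measurable selection of $(\beta,W)$, along the lines of Proposition~\ref{prop:meas_sel}, then furnishes the predictable Girsanov parameters; choosing at each point the solution of \emph{minimal} size leaves room for the quantitative bound needed next.

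\textbf{The main obstacle: a genuine density, close to $P$.} The delicate point --- and the heart of the proof --- is that a priori $\cE(N)$ is only a strictly positive local martingale, hence a supermartingale with $E\cE(N)_T\le 1$, with no control on $|\cE(N)_T-1|$. This is where Lemma~\ref{Girsanov} enters. In place of the Hellinger-process computations of \cite{K97}, the approach --- following \cite{KL} --- is to localize along stopping times $T_n\uparrow T$ on which $\langle N^c\rangle$ and $\sum_{s\le\cdot}(\Delta N_s)^2$ are controlled, to dominate $\cE(N)^{T_n}$ directly by these ingredients (a Lenglart-type inequality), to deduce that it is a uniformly integrable martingale with terminal value close to $1$, and to paste the localized changes together; smallness of the perturbation in the jump part rests on the minimal selection together with the fact that $\int(|x|^2\wedge 1)\,K<\infty$ forces thin tails $K(|x|>R)\to 0$. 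The reason this can be pushed through is that only the $\sigma$-martingale property is sought: $S$ itself need never be integrable, the drift has been killed on all of $\Omega\times[0,T]$, and the identifying sequence of predictable sets witnessing the $\sigma$-martingale is then produced automatically --- so the change of measure can be taken arbitrarily close to $P$ in total variation. Once $\tilde P\sim P$ with $|\tilde P-P|_{TV}\le\e$ and vanishing $\tilde P$-drift of $S$ are in hand, $S$ is a $\sigma$-martingale under $\tilde P$, which is the assertion. The principal difficulty to be confronted throughout is this last reconciliation --- annihilating the \emph{entire} drift, which is not evidently a small perturbation of $P$, while remaining within $\e$ of $P$ --- and it is precisely what the localized Girsanov estimate of Lemma~\ref{Girsanov} is designed to accomplish.
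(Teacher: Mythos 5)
There is a genuine gap, and it sits exactly at the point you yourself identify as ``the heart of the proof.'' Your plan is to kill the drift with a general Girsanov pair $(\beta,W)$, choose a solution of \emph{minimal size}, and then obtain $|\tilde P-P|_{TV}\le\e$ by localization along stopping times $T_n\uparrow T$ plus a Lenglart-type domination of $\cE(N)^{T_n}$. This cannot work as described: the drift that must be annihilated is in no sense small, so the minimal solution $(\beta,W)$ is not small, and $\cE(N)_T$ being close to $1$ in $L^1$ would require the whole quadratic variation of $N$ over $[0,T]$ to be small --- stopping at $T_n$ buys nothing, because the drift has to be removed on all of $[0,T]$, not up to $T_n$. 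In the paper's argument the total-variation control is in fact the \emph{easy} part: Lemma \ref{Girsanov} takes $\beta=0$ and a pure jump-measure density $Y$ with $K_t(|Y-1|)\le\e/2$ pointwise (plus $K_t(Y-1)=0$ on $\{\Delta A>0\}$, which you never address and which is what keeps the density strictly positive); then $Z=\cE((Y-1)*(\mu-\nu))$ is a bounded-variation uniformly integrable martingale and $E\sup_t|Z_t-1|\le 2E\,Z_-|Y-1|*\nu_T\le\e$ by a one-line estimate --- no Hellinger processes, no Lenglart, no localization. The entire difficulty is thereby shifted to Lemma \ref{crucial}: one must show that a density $Y$ satisfying this \emph{smallness constraint} can still produce the (possibly huge) correction $K(Yx-xI_{\{|x|\le1\}})=-b^h$. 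The mechanism is the case split your proposal lacks: where the jump measure has unbounded support, arbitrarily large corrections are attainable with $K(|Y-1|)\le\e/2$ by placing tiny mass far out in the tail (the explicit construction of the functions $W_N$), so the attainable interval is unbounded in the needed direction; where the support is bounded, the separating property --- applied to the concrete admissible buy/short positions $I_{\{r>-n\}}$ and $-I_{\{R<n\}}$ --- forces the drift identity to hold already with $Y=1$, and it also yields the finiteness of $K(|x|I_{\{|x|>1\}})$ there. Your appeal to ``minimal selection'' and ``thin tails'' does not substitute for either half of this, and keeping a continuous Girsanov component $\beta$ only makes the $\e$-closeness harder, since $\cE(\beta\cdot S^c)_T$ cannot be kept near $1$ when $\beta$ is dictated by a large drift.

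Two further points. First, your pointwise-solvability step only shows (via Hahn--Banach and an immediate-arbitrage contradiction in the spirit of Lemma \ref{no_imm_arb}) that $-b$ lies in the \emph{closure} of the attainable convex set; exact attainment --- and attainment subject to the constraints $K(|Y-1|)\le\e/2$ and $K(Y-1)=0$ on $\{\Delta A>0\}$ --- is precisely what the interval/unboundedness argument of Lemma \ref{crucial} delivers and what your sketch leaves open (the paper also handles $d\ge2$ by a measurable separation reducing to the scalar process $S^l=l\cdot S$, for which $P$ is again separating). Second, Theorem \ref{sig} does not assume $K(|x|\wedge|x|^2)<\infty$, so the process $b=b^h+K(xI_{\{|x|>1\}})$ appearing in your drift equation need not be well defined a priori; in the paper the relevant integrability is itself extracted from the separating property, and the equation is formulated with $b^h$ and $K(Yx-xI_{\{|x|\le1\}})$.
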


The rest of the section is concerned with the proof of Theorem \ref{sig}. We start with the construction of the density process of bounded variation defining $\tilde P$. 
\begin{lemm}
\label{Girsanov}
 Let $\e>0$ and let $Y\colon\Omega\times [0,T]\times \R^d\to (0, \infty)$ be a $\tilde \cP$-measurable function such that the following conditions are satisfied $m$-a.e.: 
\begin{enumerate}
	\item[(a)]  $K_t(|Y-1|)\le \e/2$;
	\item[(b)] $I_{\{\Delta A>0\}}K_t(Y-1)=0$. 
\end{enumerate}
Then, the process  $Z:=\cE((Y-1)*(\mu-\nu))$ is a strictly positive uniformly integrable martingale,  $\tilde P:=Z_TP$ is a probability measure and $|\tilde P-P|_{TV} \le \e$. Furthermore, the  triplet $(\tilde B,\tilde C,\tilde Y)$ of predictable characteristics of $S$ with respect to $\tilde P$ has the form   
 $$\tilde B^h=B^h+(Y-1)xI_{\{|x|\le 1\}}*\nu, \qquad \tilde C=C, \qquad \tilde \nu=Y\nu.
 $$
\end{lemm}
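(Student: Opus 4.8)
The plan is to verify directly that the stochastic exponential $Z:=\cE\big((Y-1)*(\mu-\nu)\big)$ is the density process of the required measure. First I would check that $W:=(Y-1)*(\mu-\nu)$ is well defined: by (a) the process $|Y-1|*\nu_T=\int_0^TK_s(|Y-1|)\,dA_s\le(\e/2)A_T\le\e/2$ is bounded, so $|Y-1|$ is directly $\nu$-integrable; hence $(Y-1)*\mu$ and $(Y-1)*\nu$ are finite-variation processes with $E\big[|Y-1|*\mu_T\big]=E\big[|Y-1|*\nu_T\big]\le\e/2$, and $W=(Y-1)*\mu-(Y-1)*\nu\in\cM_{loc}$ is a martingale with $E\,\mathrm{Var}(W)_T\le\e$ and continuous part $W^c=-(Y-1)*\nu$. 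Since $\Delta W_t=(Y(t,\Delta S_t)-1)I_{\{\Delta S_t\neq0\}}-\Delta A_tK_t(Y-1)$ and condition (b) annihilates the last term, and $Y>0$, we get $\Delta W_t>-1$; thus $Z=\cE(W)$ and $Z_-$ are strictly positive, with the pathwise identity $Z_t=\exp\!\big(-\int_0^tK_s(Y-1)\,dA_s\big)\prod_{s\le t}Y(s,\Delta S_s)$, the product extending over the jump times of $S$ in $[0,t]$.

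The heart of the proof is to show that $Z$ is a uniformly integrable martingale with $EZ_T=1$, rather than the mere local martingale it automatically is. As a strictly positive local martingale $Z$ is a supermartingale, so $EZ_T\le1$ is free and only the reverse inequality needs an argument. From the pathwise identity, $\big|\int_0^tK_s(Y-1)\,dA_s\big|\le\e/2$, and discarding the factors with $Y(s,\Delta S_s)<1$ (each in $(0,1)$) yields $\sup_{t\le T}Z_t\le e^{\e/2}\prod_{s\le T}\big(1+(Y-1)^+(s,\Delta S_s)\big)=e^{\e/2}\,\cE\big((Y-1)^+*\mu\big)_T$. Now $(Y-1)^+*\mu$ is an increasing process whose $P$-compensator $(Y-1)^+*\nu$ has total mass $\le\e/2$ by (a), and for such a process the stochastic exponential is $P$-integrable by a standard estimate; hence $\sup_{t\le T}Z_t\in L^1$, so $Z$ is a uniformly integrable martingale and $EZ_T=EZ_0=1$. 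Consequently $\tilde P:=Z_TP$ is a probability measure, equivalent to $P$ because $Z_T>0$. This integrability argument — which is precisely where hypothesis (a) is used in an essential way — is the main obstacle; it realizes the mechanism of \cite{KL} and dispenses with the Hellinger processes of \cite{K97}.

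For the total-variation bound I would use $|\tilde P-P|_{TV}=\tfrac12E|Z_T-1|$, the pathwise estimate $|Z_T-1|=\big|\int_0^TZ_{s-}\,dW_s\big|\le\int_0^TZ_{s-}\,d\mathrm{Var}(W)_s$ (legitimate since $W$ has finite variation and $Z_-\ge0$), and $\mathrm{Var}(W)_T\le|Y-1|*\mu_T+|Y-1|*\nu_T$. Taking expectations, the defining property of the dual predictable projection gives $E\int_0^TZ_{s-}\,d(|Y-1|*\mu)_s=E\int_0^TZ_{s-}K_s(|Y-1|)\,dA_s$, so by (a) and then $A_T\le1$, $Z_T\ge0$, $EZ_T=1$,
$$
E|Z_T-1|\le\e\,E\int_0^TZ_{s-}\,dA_s\le\e\,E[Z_TA_T]\le\e ,
$$
where integration by parts together with (b) disposes of the bracket term. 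Thus $|\tilde P-P|_{TV}\le\e$.

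Finally, the $\tilde P$-characteristics come from Girsanov's theorem for semimartingales under an absolutely continuous change of measure. Since the martingale $W$ driving the density is purely discontinuous, $\langle S^c,W\rangle=0$, so there is no drift adjustment to the continuous part and $\tilde C=C$; the compensator of $\mu$ under $\tilde P$ is $\widehat Y\nu$ with $\widehat Y$ the $\tilde\cP$-projection of $Z/Z_-$, and since $Z/Z_-=1+\Delta W=Y$ along the jumps of $S$ (the fixed-time part being annihilated by (b)) one reads off $\tilde\nu=Y\nu$; condition (b) also forces $\tilde\nu(\{t\}\times\R^d)=\Delta A_tK_t(Y)=\Delta A_tK_t(\R^d)\le1$, so $Y\nu$ is an admissible compensator. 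Writing $S$ in its $\tilde P$-canonical form with the same truncation $h$ then produces the drift correction $\tilde B^h=B^h+(Y-1)xI_{\{|x|\le1\}}*\nu$, completing the proof.
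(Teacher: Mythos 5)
Your argument is correct, but it is organized differently from the paper's, and one step would benefit from being spelled out. The paper obtains uniform integrability and the total-variation bound \emph{simultaneously} from the single estimate
\[
E\sup_{t\le T}|Z_t-1|\ \le\ E\,Z_-\!\cdot\mathrm{Var}(W)_T\ \le\ 2\,E\,Z_-\!\cdot\bigl(|Y-1|*\nu\bigr)_T\ \le\ \e\,,
\]
where the middle equality $E\,Z_-\!\cdot(|Y-1|*\mu)_T = E\,Z_-\!\cdot(|Y-1|*\nu)_T$ is the compensator identity, and the last inequality uses $|Y-1|*\nu_T\le \e/2$ together with $E[Z_- \cdot (|Y-1|*\nu)_T]\le \e/2\,\sup_n E Z_{\tau_n}\le\e/2$ after localisation. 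This one line already shows $\sup_t Z_t\in L^1$, hence $Z$ is a uniformly integrable martingale, and gives the TV bound. You instead first prove uniform integrability through the pathwise identity $Z_t=\exp(-\int_0^t K_s(Y-1)\,dA_s)\prod_{s\le t}Y(s,\Delta S_s)$ and the bound $\sup_t Z_t\le e^{\e/2}\,\cE((Y-1)^+*\mu)_T$, and only afterward estimate $E|Z_T-1|$. Both routes are valid, and your pathwise formula is a nice structural observation that the paper does not write out; but it costs you an extra step, and the step is stated a bit too quickly.

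Specifically, the claim that $\cE\bigl((Y-1)^+*\mu\bigr)_T\in L^1$ ``by a standard estimate'' is correct but not quite off-the-shelf, and you should justify it. Writing $U:=(Y-1)^+*\mu$, $\tilde U:=(Y-1)^+*\nu$ (so $\tilde U_T\le\e/2$), from $\cE(U)=1+\cE(U)_-\cdot(U-\tilde U)+\cE(U)_-\cdot\tilde U$ one localises so that the first integral is a zero-mean martingale, uses $\cE(U)_{s-}\le\cE(U)_{\tau_n}$ (monotonicity) to get $E[\cE(U)_{\tau_n}]\le 1+(\e/2)\,E[\cE(U)_{\tau_n}]$, and closes with Fatou. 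This requires $\e<2$, which is harmless since $|P-\tilde P|_{\mathsf{TV}}\le 1$ anyway, but it should be said. Two smaller remarks: (i) Calling $-(Y-1)*\nu$ ``the continuous part $W^c$'' is a harmless abuse — $W$ is a purely discontinuous martingale, so its \emph{martingale} continuous part is $0$; you mean the continuous finite-variation part, which is indeed $-(Y-1)*\nu$ once (b) kills its jumps. (ii) In the TV estimate, what lets you replace $E\int Z_{s-}\,dA_s$ by $E[Z_TA_T]$ is not (b) but predictability of $A$: integration by parts gives $Z_TA_T=\int Z_-\,dA+\int A\,dZ$, with the second term a zero-mean (localised) martingale because $A$ is bounded and predictable; condition (b) plays no role there.
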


\begin{proof}
Note that 
$$
|Y-1|*\nu_T=\int_{[0,T]} K_t(|Y-1|)dA_t\le (\e/2)A_T\le  \e/2,
$$
in view of $(a)$. The process  $M:=(Y-1)*(\mu-\nu)$ is a    martingale. In virtue of $(b)$,
$$
\Delta M_t=\int (Y(t,x)-1)\mu(\{t\},dx)-K_t(Y-1)\Delta A_t >-1. 
$$
Thus, $Z=\cE(M)$ is a strictly positive local martingale of bounded variation satisfying the linear equation  
$Z=1+Z_-\cdot M$.  Since 
\bean
E\sup_{t\le T} |Z_t-1|&=&E\sup_{t\le T} |Z_-(Y-1)*(\mu-\nu)_t|\le EZ_-|Y-1|*(\mu+\nu)_T\\
&=&2EZ_-|Y-1|*\nu_T=2EZ_T|Y-1|*\nu_T\le \e,  
\eean
the process $Z$ is uniformly integrable martingale and $|\tilde P-P|_{TV} = E|Z_T-1|\le \e$. 
The form of the triplet of predictable characteristics of $S$ follows from Girsanov's theorem---see \cite{JS:87}.
\end{proof}

\begin{rem} 
\label{rem:sigma}
In the notation of Lemma \ref{Girsanov}, the semimartingale $S$ is a $\sigma$-martingale under $\tilde P$ if and only if $\tilde K_t(|x|\wedge |x|^2)<\infty$ and $\tilde b^h_t + \tilde K_t\big (xI_{\{|x|> 1\}}\big )=0$ holds $m\hbox{-a.e.}$, which translates to
$$
K_t(Y (|x|\wedge |x|^2) )<\infty,\qquad  b^h_t+K_t\big (Y x - xI_{\{|x|\le 1} \big ) = 0, \qquad m\hbox{-a.e.}
$$
The above criterion for the $\sigma$-martingale property appeared in \cite{K97}; see also \cite{Kallsen}, \cite{JS:87}. 
\end{rem}

Continuing,   denote by $\bar\R^d$ the one-point compactification of $\R^d$. Let   $C(\bar\R^d)$  be the compact space of  continuous functions on $\bar\R^d$ equipped by  the uniform norm and the Borel $\sigma$-algebra $\cB(C(\bar\R^d))$; furthermore,  let ${\bf Y}={\bf Y}(\R^d)$ be the subset of this space formed by the strictly positive continuous functions.  We define, for every $(\omega,t)$,  the convex  sets   
\bean
\Gamma^\e_{\omega,t}&:=&\big \{Y\in {\bf Y}: \  K_t ((|x|\wedge |x|^2) Y)<\infty, \  K_t(|Y-1|)\le \e/2, \
  I_{\{\Delta A>0\}}K_t(Y-1)=0\big \},\\
\Gamma_{\omega,t}&:=&\big \{Y\in{\bf Y}: \  K_t\big (|xY-xI_{\{|x|\le 1\}}|\big )<\infty, \  b^h_t+K_t\big (xY-xI_{\{|x|\le 1\}}\big )=0\big \}. 
\eean
The graphs  of the set-valued mappings $(\omega,t)\mapsto\Gamma^\e_{\omega,t}$ and $(\omega,t)\mapsto\Gamma_{\omega,t}$ are  $\cP\otimes \cB(C(\bar\R^d))$-measurable sets;  indeed, they are intersections of level sets of functions which are $\cP$-measurable in $(\omega,t)$ and continuous in $Y$, therefore,   $\cP\otimes \cB(C(\bar\R^d))$-measurable. 

\smallskip
The crucial result of the proof of Theorem  \ref{sig} is the following.
\begin{lemm} 
\label{crucial}
$m(\Gamma^\e \cap \Gamma \neq \emptyset )=0$. 
\end{lemm}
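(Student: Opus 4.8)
The plan is to show, arguing by contradiction, that the $\bar\cP$-measurable set $D:=\{(\omega,t):\ \Gamma^\e_{\omega,t}\cap\Gamma_{\omega,t}=\emptyset\}$ is $m$-null; its measurability follows from the $\cP\otimes\cB(C(\bar\R^d))$-measurability of the two graphs noted above. Equivalently, for $m$-a.e.\ $(\omega,t)$ I must exhibit a strictly positive continuous $Y$ on $\bar\R^d$ that is $\e/2$-close to $1$ in $L^1(K_t)$, respects the atom constraint $I_{\{\Delta A>0\}}K_t(Y-1)=0$, and annihilates the drift. Assuming $m(D)>0$, the goal is to manufacture a wealth process $X\in\cX^1$ with $EX_T>0$, contradicting the separating-measure property of $P$.

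The first step is a measurable separation. For $(\omega,t)\in D$ the convex subset of $\R^d$
$$
\Theta_{\omega,t}:=\Big\{b^h_t+K_t\big(xY-xI_{\{|x|\le1\}}\big):\ Y\in\Gamma^\e_{\omega,t},\ K_t\big(\big|xY-xI_{\{|x|\le1\}}\big|\big)<\infty\Big\}
$$
is nonempty (it contains the value at any continuous $Y$ that equals $1$ near the origin and decays fast at $\infty$, using $\int_{\{|x|\le1\}}|x|^2K_t(dx)<\infty$ and $K_t(\{|x|>1\})<\infty$) and, by the definition of $\Gamma_{\omega,t}$, does not contain $0$. Separating $0$ from $\Theta_{\omega,t}$ and carrying out the choice $\bar\cP$-measurably via the graph-measurability, one obtains a $\bar\cP$-measurable $v$ with $|v|=1$ on $D$, $v=0$ off $D$, such that $vb^h_t+K_t\big(vx(Y-I_{\{|x|\le1\}})\big)\ge0$ for every $Y\in\Gamma^\e_{\omega,t}$. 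Testing this inequality against $Y$'s equal to $1$ near $0$ and driven toward $0$ on prescribed Borel subsets of the finite measure $K_t\,I_{\{|x|>1\}}$ forces, $m$-a.e.\ on $D$, that $K_t(vx<0)=0$ (no negative $v$-jumps) and that the generalized $v$-drift $vb^h_t+K_t(vx\,I_{\{|x|>1\}})\in[0,\infty]$ is $\ge0$; moreover this quantity must be strictly positive, or a strictly positive $v$-jump must be possible, on a $\bar\cP$-measurable subset $D'\subseteq D$ with $m(D')>0$ — otherwise, by interpolating $Y$ back toward $1$ on $\{|x|>1\}$ and bumping it at the atoms of $A$, one could land in $\Gamma^\e_{\omega,t}\cap\Gamma_{\omega,t}$ on $D\setminus D'$, contradicting $(\omega,t)\in D$.

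Finally I would localize and produce the arbitrage: with $H:=v\,I_{D'}$ truncated to $\{|c^{1/2}v|\le n\}\cap\{K(|x|\wedge|x|^2)\le n\}\cap\{|b^h|\le n\}$ so that $H\in L(S)$, the process $X:=H\cdot S$ has only nonnegative jumps (since $vx\ge0$ $K$-a.e.\ on $D'$) and a nondecreasing finite-variation part (the generalized $v$-drift is $\ge0$ on $D'$), hence is a submartingale that can only reach the level $-1$ continuously. Stopping $X$ at $\tau:=\inf\{t:\ X_t\le-1\}$ keeps it $\ge-1$, so $X^\tau\in\cX^1$, while the submartingale/optional-stopping property together with the strict positivity on $D'$ gives $E[X^\tau_T]>0$, contradicting that $P$ is a separating measure; hence $m(D)=0$. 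The step I expect to be the main obstacle is passing from ``$0\notin\Theta_{\omega,t}$'' to the strict conclusion defining $D'$: the constraints defining $\Gamma^\e_{\omega,t}$ (strict positivity, continuity on $\bar\R^d$, the $\e/2$-bound, and the atom condition) are not closed, so the hyperplane separation is only weak and one must rule out the borderline case $0\in\overline{\Theta_{\omega,t}}\setminus\Theta_{\omega,t}$ — that is, show that the separating-measure property forces $0$ into a suitable relative interior of the set of attainable drifts, so that a genuinely $\e/2$-close, continuous, atom-respecting representative can be extracted. This requires a careful approximation handling the small-jump part (via $\int_{\{|x|\le1\}}|x|^2K_t<\infty$) and the large-jump part (over the finite measure $K_t\,I_{\{|x|>1\}}$) separately, plus a correction at the predictable times carrying atoms of $A$; one must also check the integrability making the stopped martingale part a true martingale in the concluding arbitrage.
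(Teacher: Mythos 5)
Your sketch takes a genuinely different route from the paper, and I think the obstacle you flag at the end is not a patchable technicality but a real gap. You attempt to extract the pointwise conditions $K_t(vx<0)=0$ and nonnegativity of the $v$-drift from the hyperplane separation of $0$ from the convex set $\Theta_{\omega,t}$, using only the geometry of $\Gamma^\e_{\omega,t}$. But the constraint $K_t(|Y-1|)\le\e/2$ is a severe local restriction: on regions where $|vx|$ is bounded, any admissible perturbation of $Y$ within the $\e/2$-budget moves $K_t(vxY)$ by at most $O(\e)$, so the inequality $v b^h_t + K_t(vx(Y-I_{\{|x|\le1\}}))\ge0$ for all $Y\in\Gamma^\e_{\omega,t}$ yields only a quantitative bound, not $K_t(vx<0)=0$. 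Without that, the set $D'$ carrying a genuine immediate-arbitrage direction may be empty even when $m(D)>0$, and the concluding arbitrage never materialises. Note also that the separating-measure hypothesis is invoked only at the very end of your argument; it plays no role in the step where you need it most.

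The paper's proof avoids this by using the separating measure \emph{early} and on explicit integrands, rather than trying to read everything off the $\Gamma^\e$-separation. After reducing to $d=1$ via a measurable separating direction $l$, it introduces the predictable endpoints $r,R$ of the support of $K$ and applies the separating-measure property to $I_{\{r>-n\}}\cdot S$ and $-I_{\{R<n\}}\cdot S$; this directly yields the inequalities $-b^h\ge\chi(1)$ on $\{r>-\infty\}$ and $-b^h\le\chi(1)$ on $\{R<\infty\}$, hence equality on the intersection. The complementary cases (support unbounded above or below) are handled by an explicit deterministic construction of functions $Y_N\in\Gamma^\e$ with $\chi(Y_N)\to\pm\infty$, showing that the interval $\chi(\Gamma^\e)$ is unbounded and therefore still contains $-b^h$. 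This constructive unboundedness claim is the technical core of Lemma~\ref{crucial} and is entirely absent from your sketch; if you want to rescue your strategy you would need to use the separating measure before, not after, the separation step, and you would still need something playing the role of that claim.
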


Assuming for the moment the validity of Lemma \ref{crucial}, we explain how Theorem \ref{sig} follows. Applying the measurable selection theorem to the set-valued mapping  $(\omega,t)\mapsto\Gamma^\e_{\omega,t}\cap \Gamma_{\omega,t}$ we obtain a $\cP$-measurable $C(\bar\R^d)$-valued function $(\omega,t)\mapsto Y({\omega,t, \cdot})$ such that $Y({\omega,t, \cdot})\in \Gamma^\e_{\omega,t}\cap \Gamma_{\omega,t}$, $m$-a.e. Note that the mapping $(\omega,t,x)\mapsto Y({\omega,t,x})$ from $\Omega\times [0,T]\times \R^d$ into $(0, \infty)$ is measurable with respect to the $\sigma$-algebra $\tilde \cP=\cP\otimes \cB^d$ (due to continuity in $x$).  Using  Lemma \ref{Girsanov}, we define the new probability measure $\tilde P \sim P$ under which the local characteristics of $S$ are as follows: 
$$
\tilde b^h_t=b^h_t+K_t\big ((Y-1)xI_{\{|x|\le 1}\big ), \qquad\tilde c=c, \qquad \tilde K_t(dx)=Y(t,x)K_t(dx).
$$  
Using Remark  \ref{rem:sigma}, we obtain that $S$ is a $\sigma$-martingale with respect to $\tilde P$, which concludes the proof of Theorem \ref{sig}. Therefore, it just remains to provide the

\begin{proof}[Proof of Lemma \ref{crucial}]
We first consider the case  $d=1$.  Fix  $(\omega,t) \in \Omega \times [0, T]$. On $\Gamma^\e_{\omega,t}$ it holds that $K_t\big (|xY-xI_{\{|x|\le 1\}}|\big )<\infty$ and the affine mapping $\chi_{\omega,t}: Y\mapsto K_{\omega,t}  (xY-xI_{\{|x|\le 1\}})$ is well-defined, with the image $\chi_{\omega,t}(\Gamma^\e_{\omega,t})$ being a convex set, hence, an interval. Therefore, in the considered scalar case we need to check that $-b^h_t$ belongs to the previous interval, with the possible exception of a $m$-null set. Define the predictable process $r:=\sup\{x:\ K( (-\infty,x))=0\}$, as well as $R:=\inf \{x:\  K( (x,\infty))=0\}$. Note that 
\bean
EI_{\{r>-n\}}x^-I_{\{x\le  -n\}}*\mu_T&=&
EI_{\{r>-n\}}x^-I_{\{x\le  -n\}}*\nu_T\\
&=&\int_{[0,T]}I_{\{r>-n\}}K_t(x^-I_{\{x\le  -n\}})dA_t=0. 
\eean
Thus, the finite increasing process $I_{\{r>-n\}}x^-I_{\{x< -1\}}*\mu$ is locally bounded as having jumps do not exceeding $n$. 
The  processes  $I_{\{r>-n\}}\cdot S^c$, $I_{\{r>-n\}}xI{\{|x|\le 1\}}*(\mu-\nu)$, and $I_{\{r>-n\}} |b|\cdot A$ are also locally bounded.  
If all the mentioned processes were bounded, $I_{\{r>-n\}}\cdot S$ would be bounded from below, and the fact that $P$ is a separating measure would give $EI_{\{r>-n\}}\cdot S_T\le 0$, i.e. 
$$
EI_{\{r>-n\}}x^+I_{\{x> 1\}}*\mu_T-EI_{\{r>-n\}}x^-I_{\{x< -1\}})*\mu_T+EI_{\{r>-n\}}b^h\cdot A_T\le 0. 
$$
The first term in this bound is necessarily finite and, therefore,  
$$
EI_{\{r>-n\}}x^+ I_{\{x>1\}}*\nu_T=EI_{\{r>-n\}}x^+I_{\{x>1\}}*\mu_T<\infty. 
$$
This implies, in particular, that $I_{\{r>-n\}}K(|x|I_{\{|x|>1\}})<\infty$ $m$-a.e.
Using the standard localisation procedure, we obtain that $I_{\{r>-n\}}K(|x|I_{\{|x|>1\}})<\infty$ holds $m$-a.e. in the general case where $I_{\{r>-n\}}\cdot S$ is only locally bounded from below. Applying similar arguments to the integrand $I_{\{r>-n\}}I_{D}$ where 
$D\in \cP$, we infer  that  
$$
I_{\{r>-n\}}\big (K\big (xI_{\{|x|>1\}}\big )+b^h\big )\le 0 \quad m\hbox{-a.e.}
$$  
Combining the above it follows that $K\big (|x|I_{\{|x|>1\}}\big )<\infty$ and $K\big (xI_{\{|x|>1\}}\big )+b^h\le 0$ holds $m$-a.e. on the set $\{r>-\infty\}$. Arguing in the same way with the integrand $-I_{\{R<n\}}$ we obtain that $K\big (|x|I_{\{|x|>1\}}\big )<\infty$ and $K\big (xI_{\{|x|>1\}}\big )+b^h\ge 0$ hold $m$-a.e. on the set $\{R<\infty\}$. To recapitulate the previous discussion, modulo a $m$-null subset, the following properties hold: 
\begin{itemize}
\item on $\{r>-\infty\}$  the constant function $1\in \Gamma^\e$ and 
$-b^h \geq \chi(1)=K\big (xI_{\{|x|>1\}}\big )$;       

\item on $\{R<\infty\}$  the constant function $1\in \Gamma^\e$ and 
$-b^h \leq \chi(1)=K\big (xI_{\{|x|>1\}}\big )$. 
\end{itemize}
Therefore, on the intersections of these previous sets it holds that $-b^h=\chi (1)$. The conclusion of the lemma in the case $d=1$ is implied by the following (purely deterministic) claim: \emph{if $R=\infty$, 
then the interval
$\chi (\Gamma^\e)$ is unbounded from above}, along with its  symmetric version involving $r$. This claim is proved in the next paragraph, after which the multi-dimensional case is treated.

Still in the case $d=1$, we prove now that $R=\infty$ implies that then the interval $\chi (\Gamma^\e)$ is unbounded from above. Let $K_n(dx):=I_{\{x>n\}}(x)K(dx)$. For $\gamma>0$ define the set $\cW_{n,\gamma}$ 
of strictly positive functions $W\in C([n,\infty[)$ such that
$W(n)=1$,
$xW(x)\to 0$ as $x\to \infty$, and
$K_n(W)=\gamma$. For any $N>0$, there exists $W_N\in \cW_{n,\gamma}$ such that $K_n(xW)\ge N$. (Indeed, pick  a continuous function $V>0$ such that
$V(n)=1$, $K_n(V)<\infty$ and $K_n(xV)=\infty$.  
Choose $A>n$ such that $K_n(]n,A[)>0$ and $\gamma_1:=K_n\big (VI_{\{x>A\}}\big )<\gamma/2$. 
Take $A'>A$ such that $K_n\big (xVI_{\{A<x\le A'\}}\big )\ge N$. For sufficiently 
large $p$ we have that $\gamma_2:=K_n\big (I_{\{x>A')}V(A'\big )e^{p(A'-x)}\le \gamma/2$. 
Put 
$$W_N:=fI_{\{n<x\le A\}}+VI_{\{A<x\le A'\}}+V(A')e^{p(A'-x)}I_{\{x>A'\}}
$$ 
where 
$f$ is a strictly positive continuous function on $[n,A]$ with $f(n)=1$, $f(A)=V(A)$, and $K_n\big (fI_{\{n<x\le A\}}\big )=\gamma-\gamma_1-\gamma_2$.) Pick now $n\ge 1$ such that
$K({\R}\setminus [-n,n])\le \e/4$. Choose $q>0$ ensuring that 
$$
\delta:=K\big (e^{q(x+n)}I_{\{x<-n\}}\big )<K({\R}\setminus [-n,n]).
$$
 Take
$W_N\in \cW_{n,\gamma}$ with
$\gamma=K({\R}\setminus [-n,n])-\delta$ and $K_n(xW_N)\ge N$.
Then
$$
Y_{N}(x):=e^{q(x+n)}I_{\{x<-n\}}+
I_{\{|x|\le n\}}+W_{N}(x)I_{\{x>n\}}\in \Gamma^\e
$$
and $\chi (Y_{N})\to\infty$ as $N\to \infty$. The claim has been proved.

\smallskip

We continue with the vector case $d \geq 2$ of Lemma \ref{crucial} and show that it  can be reduced to the scalar one. Indeed,
the sets 
$$
\Xi_{\omega,t}:=\chi_{\omega,t}(\Gamma_{\omega,t})+b^h_t(\omega)\subseteq
\R^d
$$ 
are convex and
$\{(\omega,t,x):\ x\in \Xi_{\omega,t}\}\in {\mathcal P} \otimes
{\mathcal B}^d$.
By the measurable version of the separation theorem, there is
a  predictable process $l$ with values in
$\R^d$ such that, outside 
an $m$-negligible set,
$|l_{\omega,t}|=1$ and $l_{\omega,t}x< 0$
for every $x\in \Xi_{\omega,t}$ if $0\notin \Xi_{\omega,t}$, and
$l_{\omega,t}=0$, otherwise.
 We use
the superscript $l$ to denote objects related to the scalar semimartingale  $S^l:=l\cdot S$. It is easily seen that 
$\nu^l(\omega,dt,dx)=K^l_{\omega,t}(dx)dA_t(\omega)$ with
$K_{\omega,t}^l(dx)=\big (K_{\omega,t}l^{-1}_{\omega,t}\big )(dx)$ and
$$B^{l,h}=lb^h\cdot A+K\big (lx(I_{\{|lx|\le 1\}}-I_{\{|x|\le 1\}})\big )\cdot A,
$$ see \cite{JS:87}, IX.5.3;
$P$ is a separating measure for $S^l$. We have proved that
for every fixed $(\omega,t)$ outside of an
$m$-negligible set the equation
$\chi^l_{\omega,t}(Y)=-b^{l,h}_t(\omega)$ has a solution
$Y\in\Gamma^{\e l}_{\omega,t}$. Due to the above relations,
the function $Y(l_{\omega,t}x)$ belongs to $\Gamma^\e_{\omega,t}$ and
solves the equation
$\chi_{\omega,t}(Y(l_{\omega,t}x))=-b^h_t(\omega)$. Thus,
$l=0$, $m$-a.e., completing the proof.
\end{proof}

\section{Boundedness in Probability of Stochastic Exponentials}
\label{boundSE}

For a scalar semimartingale $X$ such that $X_0=1$, $X > 0$ and $X_->0$, define the \emph{stochastic logarithm} as the semimartingale  
$\cL(X):=X_-^{-1}\cdot X$.  Note that  $\Delta \cL(X)>-1$. It easily seen that  $X=\cE(\cL(X))$ and $R=\cL(\cE(R))$ for every semimartingale $R$ with $R_0=0$
and $\Delta R>-1$.

\smallskip
Let $\cR$ be a set of  real-valued  semimartingales $R$ with $R_0= 0$, $\Delta R>-1$. We also define the sets of random variables $\cR_T := \{ R_T: R \in \cR\}$ and $\cE_{T}(\cR) := \{ \cE_{T}(R): R \in \cR\}$.  We say that $\cR$ is \emph{$P$-bounded from above} if the set of random variables $\sup_{s\le T}R_s$ is $P$-bounded. In the same spirit, we define for a set of semimartingales the notions ``\emph{$P$-bounded from below}'' and ``\emph{$P$-bounded}.'' 

Since 
\[
\cE_t(R)=\exp\Bigg\{R_t-\frac 12\langle R^c\rangle_t +\sum_{s\le t}[ \log (1+\Delta R_s)-\Delta R_s] \Bigg\}, \quad R \in \cR,
\]
the inequality $\cE(R)\le e^{R}$ follows; therefore, if $\cR$ is $P$-bounded, so is  $\cE(\cR):=\{\cE(R) : R\in \cR\}$. The converse, in general, may not be true; however, one can prove the following result of independent interest; see \cite[ Lemma A.4]{Kara-Kard}.

\begin{prop} 
\label{expbounded}

Let $\cR$ be a set of semimartingales such that $R_0= 0$, $\Delta R>-1$,  and  $\cE^{-1}(R)$
is a supermartingale for all $R \in \cR$. Put $\cZ:=\cL(\cE^{-1}(\cR))$. Introduce the  following conditions:

\begin{enumerate}
\item[(a)] $\cR$ is $P$-bounded;
\item[(a$'$)]  $\cR$ is $P$-bounded from above;
\item[(a$''$)] $\cR_T$ is $P$-bounded;
\item[(b)] $\cE_{T}(\cR)$ is $P$-bounded; 
\item[(c)] $\cE(\cR)$ is $P$-bounded;
\item[(d)] $\cZ$ is $P$-bounded;
\item[(d$'$)] $\cZ$ is $P$-bounded from below.
\end{enumerate}
Then, it holds that
$$
(a)\Leftrightarrow (a') \Leftrightarrow (a'') \Leftrightarrow  (b)\Leftrightarrow  (c)\Rightarrow  (d)\Leftrightarrow  (d').
$$

\end{prop}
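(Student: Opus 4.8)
The goal is to establish the cyclic chain of equivalences in Proposition \ref{expbounded}, together with the implication to conditions $(d)$ and $(d')$. I would organise the proof around the exponential formula for $\cE_t(R)$ already recorded in the text, which immediately gives $\cE_t(R) \leq e^{R_t} \leq e^{\sup_{s\leq T} R_s}$ for $R\in\cR$, hence the ``free'' implications $(a)\Rightarrow(a')\Rightarrow(a'')$ and $(a')\Rightarrow(c)\Rightarrow(b)$. The substantive content is the reverse direction, and this is where the supermartingale hypothesis on $\cE^{-1}(R)$ must be used: without it, boundedness of stochastic exponentials says nothing about boundedness of their logarithms.

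The key idea I would use is the elementary bound relating a positive semimartingale to its stochastic logarithm from the other side. Writing $Z := \cL(\cE^{-1}(R)) \in \cZ$, we have $\cE^{-1}(R) = \cE(Z)$ with $\Delta Z > -1$, and from $\cE(f\cdot S)$-type manipulations one gets $\cE(R) = 1/\cE(Z) = \cE(-Z + \langle Z^c\rangle + \sum (\Delta Z)^2/(1+\Delta Z))$ (Yor's formula / the reciprocal identity). The point is that $\cE^{-1}(R)$ being a nonnegative supermartingale makes it $P$-bounded by Doob's maximal-type inequality applied to the supermartingale (in fact $P(\sup_{s\leq T}\cE^{-1}_s(R) > \lambda) \leq \cE^{-1}_0(R)/\lambda = 1/\lambda$, uniformly over $R\in\cR$). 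Hence $\cZ$ is automatically $P$-bounded from below — because $\inf_s Z_s$ being very negative would via the exponential formula force $\cE(Z) = \cE^{-1}(R)$ to be large, contradicting the uniform supermartingale tail bound. This yields $(d')$, and the maximal inequality for the positive supermartingale $\cE^{-1}(R) = \cE(Z)$ together with the lower bound on $Z$ upgrades this to $(d)$: $\sup_s |Z_s|$ is $P$-bounded. So $(c)\Rightarrow(d)\Leftrightarrow(d')$ comes essentially for free from the supermartingale assumption.

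The genuinely delicate implication is closing the loop, i.e.\ deducing $(a)$ (boundedness of $\sup_s|R_s|$, two-sided) from $(b)$ or from the boundedness of $\cZ$. Here I would exploit the pairing between $R$ and $Z$: since $\cE(R)\cE(-R) = \cE(0) = 1$ after the appropriate correction (more precisely $\cE(-R+\text{corr})=\cE(R)^{-1}=\cE(Z)$, so $Z$ and $R$ differ only by predictable finite-variation correction terms controlled by the jump and continuous quadratic variation), the process $R + Z$ is of finite variation with jumps $\Delta R + \Delta Z = \Delta R - \Delta R/(1+\Delta R) = (\Delta R)^2/(1+\Delta R)\geq 0$ — so $R + Z$ is actually \emph{nondecreasing}. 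Consequently $\sup_s R_s \leq \sup_s(R_s + Z_s) - \inf_s Z_s = (R+Z)_T - \inf_s Z_s$, and a symmetric estimate controls $-\inf_s R_s$ by $\sup_s Z_s$ plus the total variation. Combining: $P$-boundedness of $\cE_T(\cR)$ forces $P$-boundedness of $(R+Z)_T$ (via the logarithm, since $R+Z$ is increasing so $(R+Z)_T \geq$ its running sup, and $e^{R_T} \geq \cE_T(R)$ controls $R_T$ from above on one side while $Z_T$ from above is controlled by $\cE_T(R)^{-1}$…); chasing these through gives two-sided control of $\sup_s|R_s|$, i.e.\ condition $(a)$.

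The main obstacle, as I see it, is the careful bookkeeping in that last step: one must track which side of each inequality each of the correction terms (the continuous part $\langle R^c\rangle$ and the jump sum $\sum(\Delta R_s)^2/(1+\Delta R_s)$) lands on, and verify that the finite-variation process $R+Z$ is monotone with the correct sign, so that its terminal value simultaneously dominates $\sup_s R_s$ (up to $\inf_s Z_s$, already controlled) and is itself dominated by a combination of $\log\cE_T(R)$ and $\log\cE_T(R)^{-1}$ — both $P$-bounded by hypothesis $(b)$ and the supermartingale maximal inequality. I would present this as: first record the exponential formula and the cheap implications; second, use the nonnegative supermartingale $\cE^{-1}(R)$ and Doob to get $(d),(d')$ and the uniform bound on $\inf_s Z_s$; third, establish that $R+Z$ is nondecreasing and finish the chain. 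I expect references to \cite{JS:87} for the stochastic-exponential identities and \cite[Lemma A.4]{Kara-Kard} for the original argument, which I would follow in spirit while streamlining.
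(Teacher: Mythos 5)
Your overall plan matches the paper's in broad strokes (the exponential formula for the cheap implications, the supermartingale property for the lower bound on $\cR$ and $\cZ$, and the identity $R+Z = \langle R^c\rangle + \sum (\Delta R)^2/(1+\Delta R)$ as a nondecreasing finite-variation process for the closing step), but there are a few places where the argument is either incorrect as stated or glosses over precisely what makes this lemma nontrivial.

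First, a local error: you claim that $\cZ$ is automatically $P$-bounded from below ``because $\inf_s Z_s$ being very negative would force $\cE(Z)=\cE^{-1}(R)$ to be large, contradicting the uniform supermartingale tail bound.'' The direction of the inequality is backwards: from $\cE(Z)\le e^{Z}$, a very negative $Z$ makes $\cE(Z)$ \emph{small}, which does not contradict the Doob tail bound $P(\sup \cE(Z) > \lambda) \le 1/\lambda$. What is actually needed is that $\cE(R)=1/\cE(Z)$ is large, and for that you must use hypothesis $(c)$ (via $\cE(R) = \exp(-\log\cE(Z)) \ge e^{-Z}$, so $P$-boundedness of $\cE(\cR)$ gives $P$-boundedness of $e^{-Z}$). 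So $(d')$ is a consequence of $(c)$, not of the supermartingale assumption alone. Relatedly, $(d')\Rightarrow(d)$ is not ``free'' either: the paper proves it by stopping $Z$ at the first time it drops below a fixed negative level (turning the local supermartingale $Z^{\tau_Z}$ into a genuine nonnegative supermartingale after shifting) and applying Kolmogorov's inequality; some such argument is required.

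The more serious gap is in $(c)\Rightarrow(a)$. You correctly write $\sup_s R_s \le (R+Z)_T - \inf_s Z_s$, and you already have $\inf Z$ controlled; the entire problem is now to show that $(R+Z)_T = \langle R^c\rangle_T + \sum_{s\le T} (\Delta R_s)^2/(1+\Delta R_s)$ (the paper's set $\Gamma_2$) is $P$-bounded. Your sketch invokes ``the logarithm'' and ``$e^{R_T}\geq \cE_T(R)$,'' but this does not produce $\Gamma_2$. What $(c)$ gives directly, together with $P$-boundedness of $\cZ$ and the lower bound on $\log\cE(Z)$, is $P$-boundedness of $Z-\log\cE(Z) = \tfrac12\langle R^c\rangle + \sum \bigl(\log(1+\Delta R_s)-\tfrac{\Delta R_s}{1+\Delta R_s}\bigr)$ (the set $\Gamma_1$). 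The jump contribution here, $\varphi(y)=\log(1+y)-y/(1+y)$, is only logarithmic in $y$ for large positive jumps, whereas $\psi(y)=y^2/(1+y)$ is linear, so the pointwise bound $\psi\lesssim\varphi$ fails and $\Gamma_2\le C\,\Gamma_1$ is false. Getting from boundedness of $\Gamma_1$ to boundedness of $\Gamma_2$ is the crux of the proof, and it requires splitting the jumps by size ($\Delta R\le 2$ versus $\Delta R>2$), using that $\varphi\ge\tfrac14\psi$ on $(-1,2]$ for the small jumps, then extracting from $\Gamma_1$ a bound on $\sup_s \Delta R_s I_{\{\Delta R_s>2\}}$, and finally controlling the big-jump part of $\Gamma_2$ by multiplying the big-jump part of $\Gamma_1$ by that sup. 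Your ``chasing these through'' elides all of this; without it, the closing implication is unproven.

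Finally, note that $(b)\Rightarrow(c)$ (running sup versus terminal value) is also not among the trivial implications and requires a stopping-time argument together with the supermartingale property; you list $(a')\Rightarrow(c)\Rightarrow(b)$ as cheap, which takes care of one direction, but the loop still needs $(b)\Rightarrow(c)$ to be closed.
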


\begin{proof}
The implications $(a)\Rightarrow (a')$, $(a)\Rightarrow (a'')$, $(c)\Rightarrow (b)$, $(d)\Rightarrow (d')$ are trivial, as is $(a'')\Rightarrow (b)$ in virtue of the bound $\cE_T(R)\le e^{R_T}$. 

\smallskip

\noindent $(a')\Rightarrow (a)$. Since $\cE^{-1}(R)$  is a supermartingale when $R\in \cR$, it follows that 
$$
P\Big (\inf_t\cE_t(R)\le n^{-1}\Big )=P\Big (\sup_t\cE^{-1}_t(R)\ge n\Big )\le n^{-1}.
$$
Therefore, $\log \cE(\cR)$ is $P$-bounded from below. Since $R \ge \log \cE(R)$,  the set $\cR$ is always $P$-bounded from below under the assumption of the lemma.  
 
\smallskip

\noindent $(b)\Rightarrow (c)$. If  $(c)$  fails, there are $\e>0$ and  $R^n\in \cR$ such that $\cE_{\tau^n}(R^n)\ge n$
and $P(\tau^n<T)\ge \e$. Using the abbreviation $M^n:=\cE^{-1}(R^n)$ we have, applying Chebyshev's inequality and the supermaringale property, that 
\bean
P(M^n_T \ge n^{-1/2})&=&P(M^n_T/M^n_{\tau^n} \ge n^{-1/2}/M^n_{\tau^n},\ \tau^n<T)\\
&&+P(M^n_T \ge n^{-1/2},\ \tau^n=T)\\
&\le&P(M^n_T/M^n_{\tau^n} \ge n^{1/2})+P(\tau^n=T)\\
&\le & n^{-1/2}+P(\tau^n=T)\le 1-\e/2
\eean
for all $n$ sufficiently large. Thus, $P(\cE_{T}(R^n)>n^{1/2} )\ge \e/2$  in contradiction with $(b)$. 
 
\smallskip

\noindent $(d')\Rightarrow (d)$.  Take arbitrary  $\e>0$.  The set $\cZ$ being bounded from below,  there is  $N_0>0$ such that 
$$
\sup_{Z\in \cZ}P\Big (\inf_t Z_t\le -N+1\Big )\le \e\quad \forall\, N\ge N_0. 
$$
Omitting the dependence on $N$ we define the stopping time 
$$
\tau_Z:=\inf\{t\ge 0:\ Z_t\le -N+1\}.
$$
  Then  $P(\tau_Z<T)\le \e$. Since $\Delta Z>-1$, the local supermartingale  $Z^{\tau_Z}$, being  bounded from below, is a supermartingale, and, by Kolmogorov's inequality (applied to the supermartingale $Z^{\tau_Z}+N\ge 0$)  we have: 
$$
P\Big (\sup_t Z_t\ge N/\e\Big )\le P(\tau_Z<T)+P\Big (\sup_t Z^{\tau_Z}_t\ge N/\e\Big )\le \e+ 1/(1+1/\e)\le 2\e. 
$$  
It follows that $\cZ$ is also $P$-bounded from above, i.e.  $(d)$ holds. 

\smallskip

\noindent $(c)\Rightarrow (d')$. Note that
\beq\label{e1}
\{\cE(R):\  R\in \cR\}=\{\exp\{-\log \cE(Z)\}:\  Z\in \cZ\}
\eeq
 and $\log \cE(Z)\le Z$.
Since  $\cE(\cR)$ is $P$-bounded, so is the set $\{e^{-Z}:\  Z\in \cZ\}$, and $(d')$ holds. 

\smallskip

\noindent $(c)\Rightarrow (a)$. 
For $Z=\cL(1 / \cE(R))$ we have, using the formula for the reciprocal of the stochastic exponential, that 
\beq\label{b1}
Z=-R+\langle R^c\rangle+\sum_{s\le .} \frac{(\Delta R_s)^2}{1+\Delta R_s}. 
\eeq
On the other hand,  
$$
\log \cE(Z) = -\log \cE(R)=-R+\frac 12 \langle R^c\rangle+\sum_{s\le .} (\Delta R_s-\log (1+\Delta R_s)). 
$$
Hence, 
%\beq\label{b2}
\[
Z-\log \cE(Z)=\frac 12\langle R^c\rangle+\sum_{s\le .}\Big(\log (1+\Delta R_s) - \frac {\Delta R_s}{1+\Delta R_s}\Big). 
\]
%\eeq
We shown already that $(c)$ ensures  that the set $\cZ$ is $P$-bounded, and, in virtue of (\ref{e1}), the set $\{\log \cE(Z):\  Z\in \cZ\}$ is $P$-bounded from below. Therefore, the set of random variables 
$$
\Gamma_1:= \Big\{ \frac 12\langle R^c\rangle_T+\sum_{s\le T}\Big(\log (1+\Delta R_s) - \frac {\Delta R_s}{1+\Delta R_s}\Big),\ R\in \cR\Big\}
$$
is $P$-bounded.  
Property $(a)$ follows from (\ref{b1})  because the sets $\Gamma_1$ and 
$$
\Gamma_2:=\Big\{ \langle R^c\rangle_T+\sum_{s\le T} \frac{(\Delta R_s)^2}{1+\Delta R_s}   ,\ R\in \cR\Big\}
$$
are $P$-bounded simultaneously, the fact requiring some comments. Of course, $P$-boundedness of $\Gamma_2$ implies $P$-boundedness of $\Gamma_1$ because 
$$
\varphi(y):=\log (1+y) - \frac{y}{1+y}\le \psi(y):= \frac{y^2}{1+y}, \qquad y>-1. 
$$
More surprising is the converse implication needed in the proof.  To check it, suppose that  $\Gamma_1$ is $P$-bounded.  Then the set  
$$
\Bigg\{ \langle R^c\rangle_T+\sum_{s\le T} \frac{(\Delta R_s)^2}{1+\Delta R_s}I_{\{\Delta R_s\le 2\}}   ,\ R\in \cR\Bigg\}
$$
is  $P$-bounded due to the inequality $\varphi(y)\ge (1/4) \psi (y)$, valid for $y\in (-1,2]$.  Using the bound  $\sup_{s\le T} \varphi (\Delta R_s)\le \sum_{s\le T} \varphi (\Delta R_s)$, we infer that 
$$
\Big\{\sup_{s\le T}I_{\{\Delta R_s> 2\}}\log (1+\Delta R_s),\ R\in \cR\Big\}
$$ 
is $P$-bounded, implying that also  $\{\sup_{s\le T}\Delta R_s I_{\{\Delta R_s> 2\}},\ R\in \cR\}$ is $P$-bounded. 
Noting that   
\[
\sup_{s\le T}\{(\Delta R_s)^2 I_{\{\Delta R_s> 2\}}\}\sum_{s\le T}\Big(\log (1+\Delta R_s) - \frac {\Delta R_s}{1+\Delta R_s}\Big)I_{\{\Delta R_s> 2\}} \ge \sum_{s\le T} \frac {(\Delta R_s)^2}{1+\Delta R_s} I_{\{\Delta R_s> 2\}}
\]
and the set of random variables in the left-hand side of this inequality when $R$ runs through $\cR$ is $P$-bounded we obtain that the set
$$
\Bigg\{\sum_{s\le T} \frac{(\Delta R_s)^2}{1+\Delta R_s}I_{\{\Delta R_s> 2\}}   ,\ R\in \cR\Bigg\}
$$
is $P$-bounded, and so is $\Gamma_2$. 
\end{proof}

\section{Laws of Large Numbers}
\label{LLN}

The ``classical'' law of large numbers for a locally square integrable martingale $M$ asserts that the  ratio $M_T/(1+\langle M\rangle_T)$ tends a.s. to zero as $T\to \infty$ on the set $\{ \langle M\rangle_\infty = \infty \}$ and tends to 
$M_\infty/(1+\langle M\rangle_\infty)$ on $\{ \langle M\rangle_\infty < \infty \}$. 
Here, we present some simple results in the same spirit for sequences of stochastic integrals with truncated integrands, where the level of truncation, as opposed to the time horizon, tends to infinity. 

Let $J$ be a $d$-dimensional locally square integrable martingale ($J\in \cM^2_{loc}$ in standard notation) with the quadratic characteristics $\langle J\rangle=q\cdot A$, and $H$ be a $d$-dimensional predictable process. 
Set $\Gamma_\infty:=\{|q^{1/2}H|^2\cdot A_T=\infty\}$, and  define the scalar locally square integrable martingale  $M^n:=H^n\cdot J$ where $H^n:=HI_{\{|H|\le n\}}$ for all $n$. Furthermore, set  
$$
L^n:=1+\langle M^n\rangle=1+|q^{1/2}H^n|^2\cdot A.
$$ 

\begin{lemm}
\label{LLN2} The sequence of random variables $M^n_T/L^n_T$ converges to zero  in probability on the set $\Gamma_\infty$, and is $P$-bounded on the set 
$\Omega \setminus \Gamma_\infty$. 
\end{lemm}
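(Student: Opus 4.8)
\textbf{Proof plan for Lemma \ref{LLN2}.}

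The plan is to treat the two assertions separately, using the stochastic analogue of Kronecker's lemma together with a stopping-time localisation argument, which is the standard route for ``laws of large numbers'' of this type. First I would observe that $L^n$ is a continuous (hence predictable) increasing process with $L^n_0 = 1$, so that $1/L^n$ is a legitimate integrand, and one can write, via integration by parts applied to $M^n$ and $1/L^n$,
\[
\frac{M^n_T}{L^n_T} = \frac{1}{L^n_T}\, M^n_0 + \int_{(0,T]} \frac{1}{L^n_{s}}\, dM^n_s - \int_{(0,T]} M^n_{s-}\, d\!\left(\frac{1}{L^n_s}\right).
\]
Since $M^n_0 = 0$, the first term vanishes. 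The heart of the matter is to control the two remaining terms. Because $H^n = H I_{\{|H|\le n\}}$ increases to $H$ pointwise, the quadratic characteristics $\langle M^n\rangle = |q^{1/2}H^n|^2\cdot A$ increase in $n$ to $|q^{1/2}H|^2\cdot A$; on $\Omega\setminus\Gamma_\infty$ the latter is finite, so $L^n_T$ is bounded uniformly in $n$ by a finite random variable, while on $\Gamma_\infty$ we have $L^n_T\uparrow\infty$.

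For the $P$-boundedness on $\Omega\setminus\Gamma_\infty$, I would localise: fix $N$ and set $\tau_N := \inf\{t : |q^{1/2}H|^2\cdot A_t \ge N\}\wedge T$, so that on $\{|q^{1/2}H|^2\cdot A_T < N\}$ one has $\tau_N = T$ and $L^n_{\tau_N} \le 1+N$ for all $n$. On the stochastic interval $[0,\tau_N]$ the process $N^n := (1/L^n_{-})\cdot M^n$ is a locally square-integrable martingale with $\langle N^n\rangle_{\tau_N} = \int_0^{\tau_N}(L^n_s)^{-2}\,d\langle M^n\rangle_s \le \int_0^{\tau_N}(L^n_s)^{-2}\,dL^n_s \le \int_1^{\infty}u^{-2}\,du = 1$, using that $L^n$ is continuous and increasing. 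Hence $\mathbb E\langle N^n\rangle_{\tau_N}\le 1$, so $\{N^n_{\tau_N}\}_n$ is bounded in $L^2$, hence bounded in probability; combined with $L^n_{\tau_N}\ge 1$ and the fact that $M^n_{\tau_N}/L^n_{\tau_N}$ can be recovered from $N^n_{\tau_N}$ and the (bounded) finite-variation term, $\{M^n_{\tau_N}/L^n_{\tau_N}\}_n$ is $P$-bounded. Letting $N\to\infty$ exhausts $\Omega\setminus\Gamma_\infty$ up to a null set, which gives the claim there.

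For convergence to zero on $\Gamma_\infty$: again work with $N^n = (1/L^n_{-})\cdot M^n$, which by the same computation has $\langle N^n\rangle_T \le 1$ for every $n$, so $\{N^n_T\}_n$ is bounded in $L^2$ and a fortiori in probability. The desired conclusion then follows from the identity obtained by integration by parts above, once one checks that on $\Gamma_\infty$ the remaining pieces, namely $N^n_T$ divided by the growing normalisation and the finite-variation integral $\int M^n_{s-}\,d(1/L^n_s)$, are negligible — the standard Kronecker-type bound $\big|\int_{(0,T]} M^n_{s-}\,d(1/L^n_s)\big| \le \sup_{s\le T}|N^n_s| + (\text{controlled remainder})$ does this, using $\sup_{s\le T}|N^n_s|$ bounded in probability by Doob together with $L^n_T\to\infty$. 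I expect the main obstacle to be the bookkeeping in this last step: one must phrase the Kronecker argument so that it works uniformly in $n$ on the random set $\Gamma_\infty$ (where the normalisation depends on $n$ and diverges), rather than in the classical setting where the time horizon diverges with a fixed integrand; writing $M^n_T/L^n_T = N^n_T - \int_{(0,T]}(L^n_{s-} - L^n_s)/(L^n_{s-}L^n_s)\,M^n_{s-}\,ds$-type corrections and estimating crudely by $2\sup_{s\le T}|N^n_s|\cdot(\text{something}\to 1)$ should suffice, but it requires care with the continuity of $L^n$ to avoid spurious jump terms.
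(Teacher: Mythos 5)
Your plan has two genuine gaps, one of which you partly anticipate.

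\textbf{Continuity of $L^n$.} You assert that $L^n$ is continuous and use this twice: to write the integration-by-parts identity without a covariation term, and to bound $\int_0^{\tau_N}(L^n_{s})^{-2}\,dL^n_s$ by $\int_1^\infty u^{-2}\,du$. In the paper's set-up the integrator $A$ (equivalently, $\langle J\rangle$) is merely a predictable increasing c\`adl\`ag process; it can and does carry jumps (the paper explicitly treats the case $\Delta A>0$). Hence $L^n$ is in general discontinuous. Worse, you define $N^n := (1/L^n_-)\cdot M^n$, so $\langle N^n\rangle_{T} = \int (L^n_{s-})^{-2}\,dL^n_s$, and this is \emph{not} bounded by a constant: a single jump of $L^n$ from $1$ to $K$ contributes $K-1$. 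The substitute step that does work uses the predictable integrand $L^n$ (not $L^n_-$): by a convexity (finite-increment) argument one obtains a bound on $(L^n)^{-\alpha}\cdot L^n_T$ for $\alpha>1$, which is exactly what the paper does with $\alpha = 3/2$, i.e.\ the integrand $(L^n)^{-3/4}$ for $M^n$.

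\textbf{The convergence to zero on $\Gamma_\infty$.} Even after fixing the continuity issue, normalising by $(L^n_T)^{-1}$ only yields that $M^n_T/L^n_T$ is $P$-bounded, uniformly over $\Omega$; it does not yield convergence to zero on $\Gamma_\infty$. You correctly flag this as ``the main obstacle'' and appeal to a Kronecker-type estimate, but the classical Kronecker mechanism is unavailable here: it requires a single convergent stochastic integral $N=\int (1/L)\,dM$ and a deterministic time horizon going to infinity, whereas in this lemma the horizon $T$ is fixed and the processes $N^n$ themselves change with $n$. There is no limit process to subtract from the Ces\`aro-type term $(1/L^n_T)\int N^n_s\,dL^n_s$. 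The paper sidesteps this cleanly by using the exponent $3/4$ rather than $1$: it shows that $M^n_T/(L^n_T)^{3/4}$ is bounded in $L^2$ (via $(L^n)^{-3/2}\cdot L^n_T\le 2$ and Doob), and then observes that $M^n_T/L^n_T = \big(M^n_T/(L^n_T)^{3/4}\big)\cdot(L^n_T)^{-1/4}$, with the second factor tending to zero on $\Gamma_\infty$ since $L^n_T\uparrow 1+|q^{1/2}H|^2\cdot A_T=\infty$ there. Without some analogue of this ``spare power'' trick, your outline does not close the argument on $\Gamma_\infty$.
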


\begin{proof}
Put $X^n:=(L^n)^{-3/4}\cdot M^n=(L^n)^{-3/4}H^n\cdot J$. We claim that $(L^n)^{-3/2}\cdot L^n_T\le 2$. Indeed,  using the change of variable 
formula for $F(x):=2 x^{-1/2}$, the finite increment formula,  and the monotonicity of $F'$ we get that
$$
F( L^n_T)-F(1)=F'( L^n)\cdot L_T+\sum_{s\le T}(F( L^n_s)-F( L^n_{s-})-F'( L^n_s)\Delta  L^n_s)\le   F'( L^n)\cdot L^n_T
$$
implying the claimed bound. 
Thus,  
$X^n\in \cM^2$ and, by Doob's inequality, 
\beq\label{doob}
E\sup_{s\le T}|X^{n}_s|^2\le 4E|X^{n}_T|^2=4E(L^n)^{-3/2}\cdot \langle M^n\rangle_T= 4E(L^n)^{-3/2}\cdot L^n_T\le 8.
\eeq
That is, $\sup_{s\le T}|X^{n}_s|$ is bounded in $L^2$, hence, in probability. 

The positive process $U^n:=(L^n)^{3/4}$ is of bounded variation and 
$$
X^n_TU^n_T=X^n_-\cdot U^n_T+U^n\cdot X^n_T=X^n_-\cdot U^n_T+M^n_T.
$$
This implies that 
$$
\frac {M^n_T}{(L^n_T)^{3/4}}=X^n_T-\frac{1}{U^n_T}X^n_-\cdot U^n_T.
$$
It follows that $|M^n_T/(L^n_T)^{3/4}|\le 2\sup_{s\le T}|X^{n}_s|$. Therefore, the sequence of ratios $M^n_T/(L^n_T)^{3/4}$
is $P$-bounded. Since $L^n_T\to 1+|q^{1/2}H|^2\cdot A_T $,  the assertion of the lemma is straightforward. 
\end{proof}

Let now $N^n$ be a sequence of counting processes with compensators of the 
form $\tilde N^n=I_{\{G\le n\}}\cdot \tilde N$ where $\tilde N$ is a predictable increasing c\`adl\`ag process and  
$G\ge 0$ is a predictable process.  Let $\Theta_\infty:=\{\tilde N_T=\infty\}$ and let $R^n_T:=1+\tilde N^n_T$. 
\begin{lemm} 
\label{LLN1}The sequence of random variables $N^n_T/R^n_T\to 1$  in probability  on the set  
$\Theta_\infty$ and is $P$-bounded on the set 
$\Omega \setminus \Theta_\infty$. 
\end{lemm}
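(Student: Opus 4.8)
The plan is to adapt the proof of Lemma~\ref{LLN2}, with the compensated counting process taking the role of the locally square integrable martingale and its predictable compensator that of the angle bracket. First I would set $M^n := N^n - \tilde N^n$, a local martingale whose jumps satisfy $-1 \le \Delta M^n \le 1$: indeed the jumps of the counting process $N^n$ lie in $\{0,1\}$, while the predictable jumps of a counting compensator satisfy $\Delta \tilde N^n_\tau = P(\Delta N^n_\tau = 1 \mid \cF_{\tau-}) \in [0,1]$. Hence $M^n \in \cM^2_{loc}$, and a routine computation of the predictable compensator of $[M^n] = \sum_{s \le \cdot}(\Delta M^n_s)^2$ shows that $\tilde N^n - \langle M^n\rangle$ is increasing, that is, $d\langle M^n\rangle \le d\tilde N^n$. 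Writing $R^n := 1 + \tilde N^n$, a predictable increasing process with $R^n_0 = 1$, and using that $G$ is finite so $I_{\{G \le n\}} \uparrow 1$, monotone convergence gives $\tilde N^n_T \uparrow \tilde N_T$; thus $R^n_T \to \infty$ on $\Theta_\infty$ while $1 \le R^n_T \le 1 + \tilde N_T < \infty$ on $\Omega \setminus \Theta_\infty$. Since
\[
\frac{N^n_T}{R^n_T} = \frac{M^n_T}{R^n_T} + \frac{\tilde N^n_T}{1 + \tilde N^n_T},
\]
and the last term tends to $1$ on $\Theta_\infty$, the entire statement reduces to proving that $\{M^n_T/(R^n_T)^{3/4}\}_n$ is $P$-bounded: on $\Theta_\infty$ this forces $M^n_T/R^n_T = (M^n_T/(R^n_T)^{3/4})\,(R^n_T)^{-1/4} \to 0$ in probability, and on $\Omega \setminus \Theta_\infty$ the factor $(R^n_T)^{-1/4} \le 1$ keeps $N^n_T/R^n_T$ $P$-bounded.

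For the key boundedness claim I would put $X^n := (R^n)^{-3/4} \cdot M^n$, using the c\`adl\`ag predictable version of $(R^n)^{-3/4}$ as integrand. Then $\langle X^n\rangle = (R^n)^{-3/2} \cdot \langle M^n\rangle \le (R^n)^{-3/2} \cdot \tilde N^n$, and since $d\tilde N^n = dR^n$ the right-hand side equals $\int_0^{\cdot}(R^n_s)^{-3/2}\,dR^n_s$. Splitting this Stieltjes integral into its continuous and jump parts and comparing with the decrease of $2(R^n)^{-1/2}$ along the path---which is exactly the convexity argument already used for $F(x) = 2x^{-1/2}$ in the proof of Lemma~\ref{LLN2}---gives the \emph{pathwise} bound $\langle X^n\rangle_T \le 2\big(1 - (R^n_T)^{-1/2}\big) \le 2$. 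Consequently $X^n$ is a genuine $L^2$-martingale with $E|X^n_T|^2 = E\langle X^n\rangle_T \le 2$, and Doob's inequality yields $E\sup_{s \le T}|X^n_s|^2 \le 8$, uniformly in $n$.

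It then remains to recover $M^n$ from $X^n$: since $dM^n_s = (R^n_s)^{3/4}\,dX^n_s$, integrating by parts against the increasing finite-variation process $U^n := (R^n)^{3/4}$---the covariation term being absorbed into $\int_0^{\cdot}U^n_s\,dX^n_s = M^n$, exactly as in Lemma~\ref{LLN2}---yields $M^n_T = X^n_T(R^n_T)^{3/4} - \int_0^T X^n_{s-}\,dU^n_s$, so that $|M^n_T/(R^n_T)^{3/4}| \le |X^n_T| + \sup_{s \le T}|X^n_s| \le 2\sup_{s \le T}|X^n_s|$. This sequence is bounded in $L^2$, hence $P$-bounded, which is all that was required. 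Relative to Lemma~\ref{LLN2} the only genuinely new ingredients are the inequality $d\langle M^n\rangle \le d\tilde N^n$ and the elementary reduction to the asymptotics of $R^n_T = 1 + \tilde N^n_T$, so I do not expect a serious obstacle; the most delicate point is the pathwise estimate on $\langle X^n\rangle_T$ when $\tilde N$ charges its jumps, but this is dispatched verbatim by the convexity computation underlying Lemma~\ref{LLN2}.
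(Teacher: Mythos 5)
Your proposal is correct and follows essentially the same route as the paper: compensate $N^n$ by $\tilde N^n$, use $\langle M^n\rangle\le\tilde N^n$ (the paper computes $\langle M^n\rangle=(1-\Delta\tilde N^n)\cdot\tilde N^n$), and rerun the argument of Lemma \ref{LLN2} with $L^n$ replaced by $R^n$, the only change being an inequality in place of the second equality in \eqref{doob}, before concluding via $N^n_T/R^n_T=M^n_T/R^n_T+\tilde N^n_T/(1+\tilde N^n_T)$. You merely spell out the details that the paper compresses into ``exactly in the same way as in the previous lemma.''
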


\begin{proof}
Put $M^n:=I_{\{G\le n\}}\cdot (N^n-\tilde N^n)$.
Then the process  $M^n\in \cM_{loc}^2$ and $\langle M^n\rangle=(1-\Delta \tilde N^n)\cdot \tilde N^n$.  Exactly in the same way as in the proof of the previous lemma but replacing  $L^n$ by $R^n$ we obtain that $M^n_T/(R^n_T)^{3/4}$
is $P$-bounded (the only change is in (\ref{doob}) where the second equality should be replaced by an inequality). This implies the claim.
\end{proof}

\bibliographystyle{alph}

\end{document}